\documentclass[12pt]{amsart}

\usepackage{amssymb, amsthm, amsmath, amsfonts, amsxtra, mathrsfs, mathtools}
\usepackage{color}
\usepackage{import}
\usepackage{thmtools}
\usepackage{thm-restate}
\usepackage[colorlinks=false, allcolors=blue]{hyperref}
\usepackage{cleveref}
\usepackage{chngcntr}
\usepackage[intoc]{nomencl}
\makenomenclature
\usepackage{stmaryrd}
\usepackage{svg}
\usepackage{tikz-cd}
\usepackage{tikz}
\usepackage{tikzsymbols}
\usetikzlibrary{decorations.pathreplacing,angles,quotes}
\usepackage{enumitem}
\usetikzlibrary{matrix}
\usepackage[ruled,vlined]{algorithm2e}
\usepackage[mathscr]{euscript}
\usepackage[normalem]{ulem}
\usepackage{comment}
\usepackage{multicol}
\usepackage{caption, subcaption}
\declaretheorem[name=Theorem,numberwithin=section]{theorem}
\input xy
\xyoption{all}

\usepackage[letterpaper, top=3cm, bottom=3cm,left=2.7cm, right=2.7cm, heightrounded,bindingoffset=0mm]{geometry}

\newtheorem{thm}{Theorem}[section]
\newtheorem{proposition}[thm]{Proposition}
\newtheorem{corollary}[thm]{Corollary}
\newtheorem{lemma}[thm]{Lemma}
\theoremstyle{definition}
\newtheorem{definition}[thm]{Definition}
\newtheorem{example}[thm]{Example}
\newtheorem{remark}[thm]{Remark}

\newtheorem{notation}[thm]{Notation}
\newtheorem{convention}[thm]{Convention}
\newtheorem{observation}[thm]{Observation}

\def\Z{\mathbb{Z}}
\def\Q{\mathbb{Q}}
\def\R{\mathbb{R}}
\def\C{\mathbb{C}}
\def\P{\mathbb{P}}

\def\Z{\mathbb{Z}}


\def\calC{\mathcal{C}}

\def\calL{\mathcal{L}}
\def\calM{\mathcal{M}}

\def\calS{\mathcal{S}}
\newcommand{\T}{\mathcal{T}}


\def\tbI{\mathbf{I}}
\def\tbJ{\mathbf{J}}
\newcommand{\w}{\mathbf{w}}

\def\a{\mathfrak{a}}
\newcommand{\abs}[1]{\left\lvert#1\right\rvert}
\def\b{\mathfrak{b}}

\def\k{b}
\def\L{\overline{\mathcal{L}}}
\def\M{\overline{\mathcal{M}}}

\newcommand{\Y}{\mathsf{Y}}

\title{Permutohedral Complexes and Rational Curves with Cyclic Action}

\author[E. Clader]{Emily Clader}\address{Emily Clader, Department of Mathematics, San Francisco State University}
\email{\url{eclader@sfsu.edu}}

\author[C. Damiolini]{Chiara Damiolini}
\address{Chiara Damiolini, Department of Mathematics, Rutgers University}
\email{\url{chiara.damiolini@rutgers.edu}}

\author[D. Huang]{Daoji Huang}\address{Daoji Huang, ICERM, Brown University}
\email{\url{daoji_huang@brown.edu}}

\author[S. Li]{Shiyue Li}\address{Shiyue Li, Department of Mathematics, Brown University}
\email{\url{shiyue_li@brown.edu}}

\author[R. Ramadas]{Rohini Ramadas}\address{Rohini Ramadas, Department of Mathematics, Brown University}
\email{\url{rohini_ramadas@brown.edu}}

\begin{document}
\maketitle

\begin{abstract}
    We define a moduli space of rational curves with finite-order automorphism and weighted orbits, and we prove that the combinatorics of its boundary strata are encoded by a particular polytopal complex that also captures the algebraic structure of a complex reflection group acting on the moduli space.  This generalizes the situation for Losev--Manin's moduli space of curves (whose boundary strata are encoded by the permutohedron and related to the symmetric group) as well as the situation for Batyrev--Blume's moduli space of curves with involution, and it extends that work beyond the toric context.
\end{abstract}

\section{Introduction}\label{sec:introduction}

The moduli space $\M_{0,n}$ of genus-zero stable curves with $n$ distinct marked points is a fundamental object in algebraic geometry, in part due to its applicability---to such fields as enumerative geometry, representation theory, and mathematical physics, to name a few---but also because it is an interesting variety in its own right. In particular, while $\M_{0,n}$ is not toric when $n \geq 5$, it shares some of the combinatorial structure that a toric variety would enjoy.  The Chow ring of a toric variety, for example, is generated by the toric boundary (the positive-codimension torus-invariant subvarieties) with relations described combinatorially in terms of fan data; analogously, Keel proved in \cite{keel1992intersection} that the Chow ring of $\M_{0,n}$ is generated by the modular boundary (the positive-codimension boundary strata) with relations described combinatorially in terms of dual graphs.

One perspective on the close connection between $\M_{0,n}$ and toric varieties is that the moduli problem can be tweaked to produce a space that is, in fact, toric.  Losev and Manin studied a particularly significant such modification in \cite{losev2000}, constructing a moduli space $\L_n$ that parameterizes genus-zero curves with marked points $(y_1, y_2, z_1, \ldots, z_n)$, where the marked points $y_1$ and $y_2$ are ``heavy"---that is, they cannot coincide with any other marked points---whereas the marked points $z_1, \ldots, z_n$ are ``light" in the sense that they are allowed to coincide with one another. The space $\L_n$ (which is birational to $\M_{0,n-1}$) is a toric variety, and its associated polytope is the permutohedron $\Pi_n$: the convex hull in $\R^n$ of the $n!$ points obtained by permuting the coordinates of $(1,2, \ldots, n)$.   Moreover, Losev and Manin proved that the torus-invariant strata of $\L_n$ are precisely the boundary strata, which implies that there is a dimension-preserving, inclusion-preserving bijection
\[ \left\{ \substack{\textstyle\text{boundary }\\ \textstyle\text{strata in } \overline{\calL}_n}\right\} \longleftrightarrow \left\{ \substack{\textstyle\text{faces}\\\textstyle\text{ of }\Pi_n}\right\}.\]
From a combinatorial
perspective, on the other hand, the faces of $\Pi_n$ have another interpretation: they encode the generation of the symmetric group $S_n$ by adjacent transpositions.   Namely, the $d$-dimensional faces of $\Pi_n$ are in inclusion-preserving bijection with the right cosets in $S_n$ of subgroups of the form
\[\langle \tau_1, \ldots, \tau_d \rangle \subseteq S_n,\]
where $\tau_1, \ldots, \tau_d$ are adjacent transpositions.

Batyrev and Blume extended the work of Losev and Manin in \cite{Batyrev2009TheFO, Batyrev2011generalisations}, constructing a moduli space $\L^2_n$ that parameterizes genus-zero curves with an involution $\sigma$, two light fixed points of $\sigma$, one heavy marked orbit of $\sigma$, and $n$ light marked orbits.  Again, this moduli space is toric, and its torus-invariant strata are precisely the boundary strata, so one obtains a dimension-preserving, inclusion-preserving bijection
\[ \left\{ \substack{\textstyle\text{boundary }\\ \textstyle\text{strata in } \overline{\calL}^2_n}\right\} \longleftrightarrow \left\{ \substack{\textstyle\text{faces}\\ \textstyle\text{ of }\Delta^2_n}\right\}.\]
Here, $\Delta^2_n$ is the polytope known as the type-$B$ permutohedron, which is the convex hull in $\R^n$ of the $2^nn!$ points obtained by permuting the coordinates of $(\pm 1, \pm 2, \ldots, \pm n)$.  Also analogously to the Losev--Manin case, this polytope has a group-theoretic interpretation, this time in terms of the complex reflection group $S(2,n)$ of $n \times n$ matrices all of whose nonzero entries are $\pm 1$, and with exactly one nonzero entry in each row and column. 

The motivation for Batyrev and Blume's work comes from the theory of root systems.  Specifically, they constructed a toric variety associated to any root system and proved
that Losev--Manin space $\L_n$ is the toric variety associated to the classical root system $A_{n-1}$, while working instead with the root system $B_n$ yields their moduli space $\L^2_n$. From the
perspective of root systems, however, this seems to be the end of the line: Batyrev and Blume proved that the toric varieties in types $C$ and $D$ do not have equally well-behaved modular interpretations.

In this paper, we propose a generalization of Losev--Manin and Batyrev--Blume's story in a different direction.  Namely, rather than preserve the connection to root systems, we preserve from Batyrev--Blume's work the existence of an automorphism $\sigma$ but allow it to have any finite order $r$.  The result is a moduli space $\L^r_n$ that parameterizes genus-zero curves with an order-$r$ automorphism, two light fixed points, one heavy marked orbit, and $n$ light marked orbits.

In one sense, the moduli spaces $\L^r_n$ break the story, because when $r \geq 3$, they are not toric.  In particular, then, their boundary strata are not encoded by the faces of a polytope.  However, something perhaps more intriguing is true: we prove that there exists a polytopal complex $\Delta^r_n$ whose ``$\Delta$-faces" encode the boundary strata of $\L^r_n$.  More specifically, $\Delta^r_n$ is defined as the subset of
\[\Big(\mathbb{R}^{\geq 0} \cdot \mu_r \Big)^n \subseteq \C^n\] bounded by certain hyperplanes (where $\mu_r$ denotes the group of $r$th roots of unity), and its $\Delta$-faces are defined as the intersections of $\Delta^r_n$ with collections of the bounding hyperplanes.  When $r=2$, the complex $\Delta^r_n$ specializes to the type-$B$ permutohedron $\Delta^2_n$ and $\Delta$-faces are ordinary faces, so Batyrev--Blume's result is recovered.

Furthermore, we generalize the group-theoretic interpretation of both Losev--Manin and Batyrev--Blume's moduli spaces.  Namely, let $S(r,n)$ be the group of $n \times n$ matrices all of whose nonzero entries are $r$th roots of unity, and with exactly one nonzero entry in each row and column.  Then $S(r,n)$ is generated by the set
\[\mathcal{T} := \left\{\substack{\textstyle \text{adjacent transpositions}\\ \textstyle \text{ in }S_n \subseteq S(r,n)}\right\} \cup
\left\{\left( \begin{array}{ccccc} \zeta & 0 &0  & \cdots &0\\0& 1 & 0& \cdots & 0\\ 0 & 0 & 1 & \cdots & 0\\ & & & \ddots & \\ 0& 0& \cdots& 0& 1\end{array}\right)\right\},\]
where $\zeta$ is a primitive $r$th root of unity, and we define a {\bf $d$-dimensional $\T$-coset} to be a right coset of the form
\[\langle t_1, \ldots, t_d \rangle \cdot A \subseteq S(r,n)\]
with $t_1, \ldots, t_d \in \mathcal{T}$ and $A \in S(r,n)$.

Our main theorem is the following:

\begin{restatable}{thm}{main}
\label{thm:main}
For any integers $r \geq 2$ and $n \geq 0$, there are dimension-preserving, inclusion-preserving bijections
\[ \left\{ \substack{\textstyle\text{boundary }\\ \textstyle\text{strata in } \L^r_n}\right\} \longleftrightarrow \left\{ \substack{\textstyle \T\text{-cosets}\\ \textstyle\text{ in } S(r,n)}\right\} \longleftrightarrow \left\{ \substack{\textstyle\Delta\text{-faces}\\ \textstyle\text{ of }\Delta^r_n}\right\}.\]
\end{restatable}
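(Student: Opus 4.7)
The plan is to introduce a common combinatorial indexing set that mediates all three bijections simultaneously. A natural candidate is the collection of \emph{decorated ordered set partitions}: an ordered set partition $\pi = (B_1, \ldots, B_b)$ of $\{1, \ldots, n\}$, together with a single bit $\epsilon \in \{0, 1\}$ recording whether $\pi$ carries a $\mu_r$-decoration on its first block, with associated dimension $n - b + \epsilon$. Having fixed this common index, I would construct natural bijections from each of the three sets in the theorem to this indexing set. The dimension and inclusion statements then follow by checking that refinement of decorated ordered partitions corresponds to specialization on the moduli side, to reverse inclusion on the coset side, and to intersection with an additional bounding hyperplane on the $\Delta$-face side.

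\textbf{Moduli and coset sides.} For the boundary strata of $\L^r_n$, I would analyze dual graphs of $\mu_r$-equivariant nodal genus-zero curves endowed with two light fixed points, one heavy marked orbit, and $n$ light marked orbits. Passing to the $\mu_r$-quotient produces a chain of $\P^1$-components with the two fixed-point images at the endpoints, the heavy-orbit image toward one end, and the images of the $n$ light orbits distributed along the chain; reading off these orbits in order yields an ordered partition of $\{1, \ldots, n\}$, and the decoration records whether the component carrying the heavy orbit is globally fixed by $\mu_r$ or belongs to a free $\mu_r$-orbit of components. For the $\T$-cosets, I would use that $S(r,n) \cong \mu_r \wr S_n$ and observe that a subgroup $\langle t_1, \ldots, t_d \rangle$ generated by a subset of $\T$ is determined by which adjacent transpositions are included (giving a partition of $\{1,\ldots,n\}$ into consecutive blocks) together with a bit indicating whether the $\zeta$-generator is included (adjoining a $\mu_r$-factor on the first block). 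Passing to a right coset then repermutes the blocks and absorbs the $\mu_r$-signs on all but possibly one block, yielding a decorated ordered partition. I would verify minimality of the generating set so that the dimension $d$ of the coset matches $n - b + \epsilon$ on the combinatorial side.

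\textbf{$\Delta$-faces and the main obstacle.} For the $\Delta$-faces of $\Delta^r_n$, I would enumerate the hyperplanes that bound $\Delta^r_n \subseteq (\R^{\geq 0} \cdot \mu_r)^n$ and show that intersecting with a collection of such hyperplanes corresponds to specifying which coordinates must agree (yielding the block structure) together with which sector is pinned to the origin (yielding the decoration). I expect this step to be the main obstacle, because $\Delta^r_n$ is not a single polytope but a polytopal complex glued along the $r$ copies of the nonnegative-orthant rays at the origin, and the notion of ``$\Delta$-face''---defined intrinsically via intersections with bounding hyperplanes rather than via convex hulls---is not standard. I would therefore carry out a careful local analysis near the origin and along the boundary between sectors, checking that every intersection of bounding hyperplanes is nonempty of the correct dimension, indexed by a unique decorated ordered partition, and that inclusion of $\Delta$-faces matches refinement (with appropriate compatibility of the decoration bit). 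Once all three bijections land in the common combinatorial indexing set, the dimension- and inclusion-preserving bijections of the theorem follow by construction.
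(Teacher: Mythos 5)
Your overall strategy---routing all three bijections through a single combinatorial indexing set and then checking that dimension and inclusion are read off from that set---is exactly the strategy of the paper, which uses decorated nested chains $\tbI = (I_1 \subsetneq \cdots \subsetneq I_k \subseteq [n],\, \a)$ with $\a: I_k \to \Z_r$. However, your proposed indexing set is wrong, and this is a genuine gap rather than a presentational difference. The data you attach to an ordered set partition $(B_1,\dots,B_b)$ is a single bit $\epsilon$; the data that is actually needed is an individual $\Z_r$-label on \emph{every} element lying outside the one distinguished (``central'') block. Concretely, a $0$-dimensional $\T$-coset is a singleton $\{A\}$ with $A \in S(r,n)$, so there are $r^n\, n!$ of them, and likewise $\Delta^r_n$ has $r^n\, n!$ vertices and $\L^r_n(\zeta)$ has $r^n\, n!$ zero-dimensional strata; your index set has only $n!$ objects of dimension $0$ (the all-singleton partitions, forced to have $\epsilon=0$ by your formula $n-b+\epsilon$), so no bijection is possible. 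Already for $n=1$, $r=3$, the complex $\Delta^3_1$ is the $\mathsf{Y}$-shape with three endpoint vertices, while your scheme provides a single label of dimension $0$.

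The source of the error is visible in your coset analysis: you assert that passing to a right coset ``absorbs the $\mu_r$-signs on all but possibly one block,'' but the opposite is true. A subgroup generated by a subset of $\T$ is block-diagonal of the form $S(r,j_1)\times S_{j_2-j_1}\times\cdots\times S_{n-j_k}$, with a $\mu_r$-factor on at most one block; left-multiplying a representative $A$ by such elements absorbs the root-of-unity entries only in the rows of that one block and merely permutes the rows of the purely symmetric-group blocks, so the $\mu_r$-entry in each remaining column is a well-defined invariant of the coset. The same information appears geometrically as the choice, for each light orbit not on the central component, of which member $z_i^{j}$ sits on the $y^0$-spoke (it is not a property of the heavy-orbit components, which always form a single free $\mu_r$-orbit of leaves), and polyhedrally as the choice of ray $\R^{\geq 0}\cdot\zeta^{-\a(i)}$ containing the coordinate $x_i$. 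If you replace your bit by the full decoration $\a$ (a $\Z_r$-label on each element outside the central block), your outline becomes essentially the paper's proof; the remaining substantive work is then, as you correctly anticipated, the nonemptiness and dimension count for $\Delta$-faces, which the paper handles via a triangle-inequality argument and a product decomposition into shifted permutohedra.
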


\begin{remark}
In fact, these bijections preserve quite a bit more of the structure of the above three types of objects.  In particular, we show that there are product decompositions of boundary strata, $\T$-cosets, and $\Delta$-faces, as well as $S(r,n)$-actions on each, and the bijections of Theorem~\ref{thm:main} respect these.  See Section~\ref{sec:conclusions} for the details.
\end{remark}

When $r=2$, Theorem~\ref{thm:main} specializes to the results of Batyrev and Blume.  For larger values of $r$, the theorem indicates that the spaces $\L^r_n$ are unions of toric varieties with independent torus actions, and in this way they occupy a middle ground between toric varieties and the classical moduli spaces of genus-zero curves---a setting in which the applicability of polyhedral methods is an intriguing new avenue for study. 

Let us illustrate the statement of Theorem~\ref{thm:main} with two special cases, to give the reader a flavor of the combinatorics involved.

\begin{example}
Let $r=2$ and $n=2$, which is part of Batyrev--Blume's work.  Then $\Delta^2_2$ is the octagon in $\R^2$ with vertices $(\pm 2, \pm 1)$ and $(\pm 1, \pm 2)$, and the combinatorial content of Theorem~\ref{thm:main} is that we can label the faces of $\Delta^2_2$ in two different ways, both of which are dimension-preserving and inclusion-preserving and are illustrated in Figures~\ref{fig:Delta22example-boundary} and ~\ref{fig:Delta22example-cosets}.

\begin{figure}[h!]
\begin{subfigure}{\textwidth}
    \centering
    \includegraphics[scale=1.1]{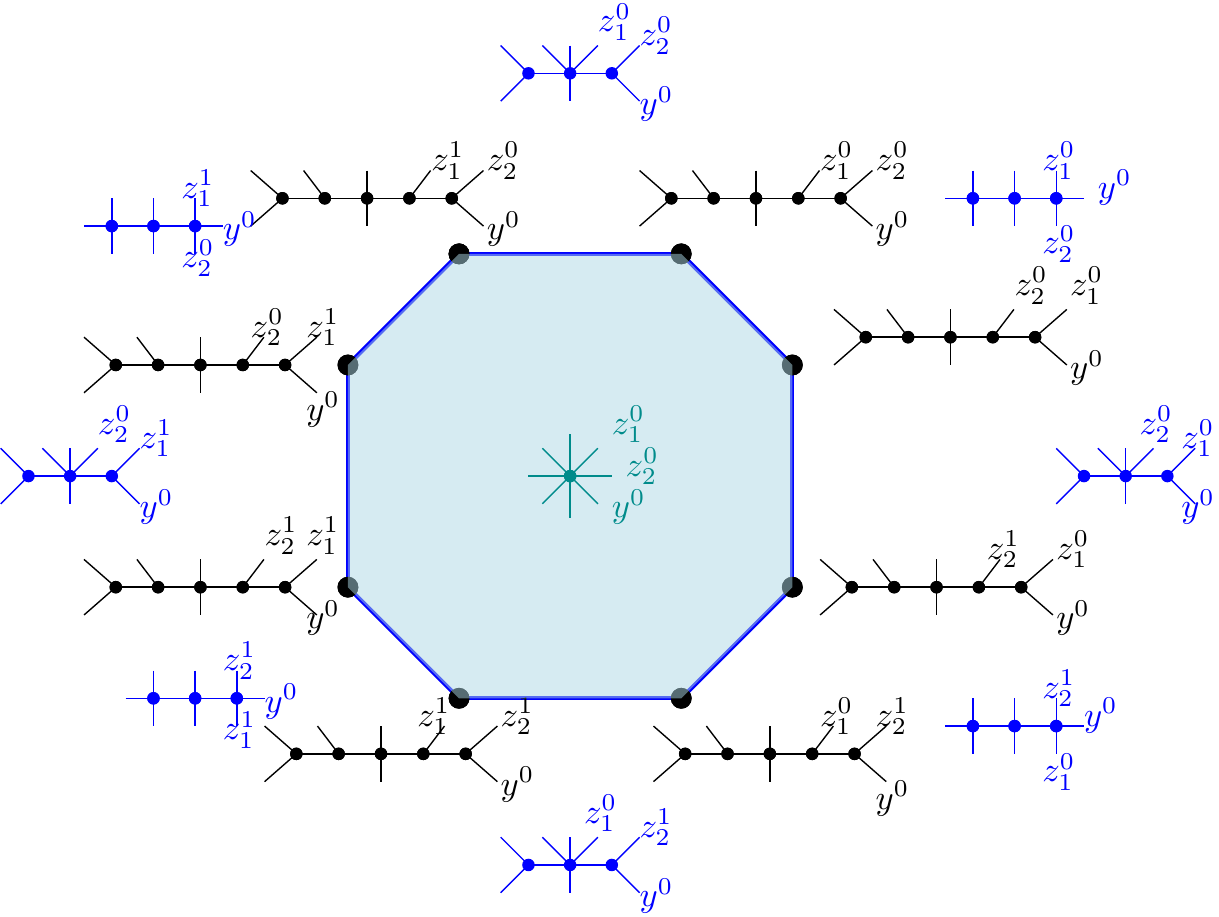}
    \caption{The polytope $\Delta^{2}_2$ with faces labeled by boundary strata in $\overline{\calL}^{2}_2$. The markings on the left side of each stratum are omitted for clarity but are uniquely determined by the involution.}
    \label{fig:Delta22example-boundary}
\end{subfigure}
\begin{subfigure}{\textwidth}
    \centering
    \hspace{-1.25cm}\includegraphics[scale=1.1]{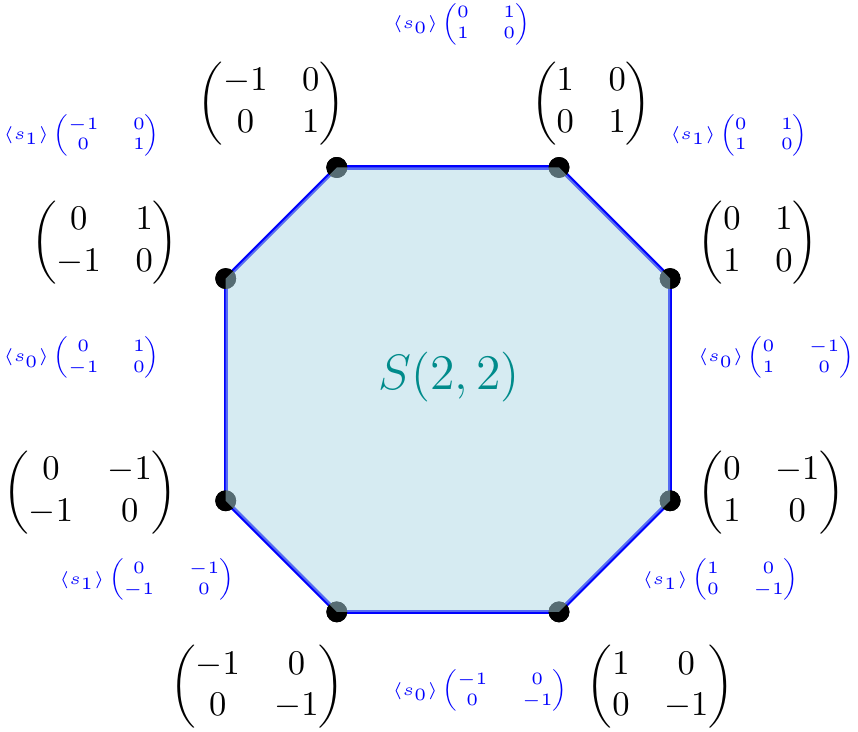}
    \caption{The polytope $\Delta^{2}_2$ with faces labeled by $\T$-cosets in $S(2, 2)$.}
    \label{fig:Delta22example-cosets}
\end{subfigure}
\caption{The example of $r=2$ and $n=2$. }
\label{fig:Delta22example}
\end{figure}

First, in Figure~\ref{fig:Delta22example-boundary}, we label each face of $\Delta^2_2$ by a boundary stratum in $\L^2_2$.  Such a stratum is described by an odd-length chain of rational curves---depicted by its dual graph in the figure---where the involution reflects across the central component, together with two marked fixed points of the involution (on the central component) as well as light orbits $(z_1^0, z_1^1)$ and $(z_2^0, z_2^1)$ and a heavy orbit $(y^0, y^1)$.

Second, as illustrated in Figure~\ref{fig:Delta22example-cosets}, each face of $\Delta^2_2$ can be labeled by a $\T$-coset in $S(2,2)$.  In this case, we have $\T = \{s_0, s_1\}$,
where
\[s_0=\begin{pmatrix} -1 & 0 \\
    0 & 1 \end{pmatrix} \text{ and } s_1=\begin{pmatrix} 0 & 1 \\
    1 & 0\end{pmatrix}.\]
The $0$-dimensional $\T$-cosets (which label the vertices) are the elements of $S(2,2)$, while the $1$-dimensional $\T$-cosets are the right cosets in $S(2,2)$ of subgroups generated by a single element of $\T$.  Since the two elements of $\T$ together generate $S(2,2)$, there is only a single $2$-dimensional $\T$-coset, which is the entire group and labels the unique $2$-dimensional face.
\end{example}

\begin{example}
Now, let $r=3$ and $n=2$, which is a new case in the current work.  In the previous example, the intersection of $\Delta^2_2 \subseteq \R^2$ with each quadrant is a pentagon, and these pentagons meet in pairs when a coordinate changes sign.  When $r=3$, by contrast, the polytopal complex $\Delta^3_2$ is a subset of
\[\Big(\mathbb{R}^{\geq 0} \cdot \{1, \zeta, \zeta^2\}\Big) \times \Big(\mathbb{R}^{\geq 0} \cdot \{1, \zeta, \zeta^2\}\Big)  \subseteq \C^2\]
where $\zeta$ is a primitive third root of unity.  Its intersection with each of the subsets
\[\Big(\mathbb{R}^{\geq 0} \cdot \zeta^a\Big) \times \Big(\mathbb{R}^{\geq 0} \cdot \zeta^b\Big) \subseteq \C^2\]
for $a,b \in \{0,1,2\}$ is a pentagon, and these nine pentagons meet in triples when $a$ or $b$ changes.  This complex is depicted in Figure~\ref{fig:Delta32example}, where we illustrate the statement of Theorem~\ref{thm:main} again by labeling some of the $\Delta$-faces in two ways: first by the boundary strata in $\L^3_2$, which are described by marked trees of rational curves with $\mu_3$-symmetry, and second by $\T$-cosets in $S(3,2)$.

\begingroup%
  \makeatletter%
  \providecommand\color[2][]{%
    \errmessage{(Inkscape) Color is used for the text in Inkscape, but the package 'color.sty' is not loaded}%
    \renewcommand\color[2][]{}%
  }%
  \providecommand\transparent[1]{%
    \errmessage{(Inkscape) Transparency is used (non-zero) for the text in Inkscape, but the package 'transparent.sty' is not loaded}%
    \renewcommand\transparent[1]{}%
  }%
  \providecommand\rotatebox[2]{#2}%
  \ifx\svgwidth\undefined%
    \setlength{\unitlength}{320.64581924bp}%
    \ifx\svgscale\undefined%
      \relax%
    \else%
      \setlength{\unitlength}{\unitlength * \real{\svgscale}}%
    \fi%
  \else%
    \setlength{\unitlength}{\svgwidth}%
  \fi%
  \global\let\svgwidth\undefined%
  \global\let\svgscale\undefined%
  \makeatother%
      \begin{figure}[h!]
  \begin{picture}(1,1)
    \put(0,0){\includegraphics[width=\unitlength,page=1]{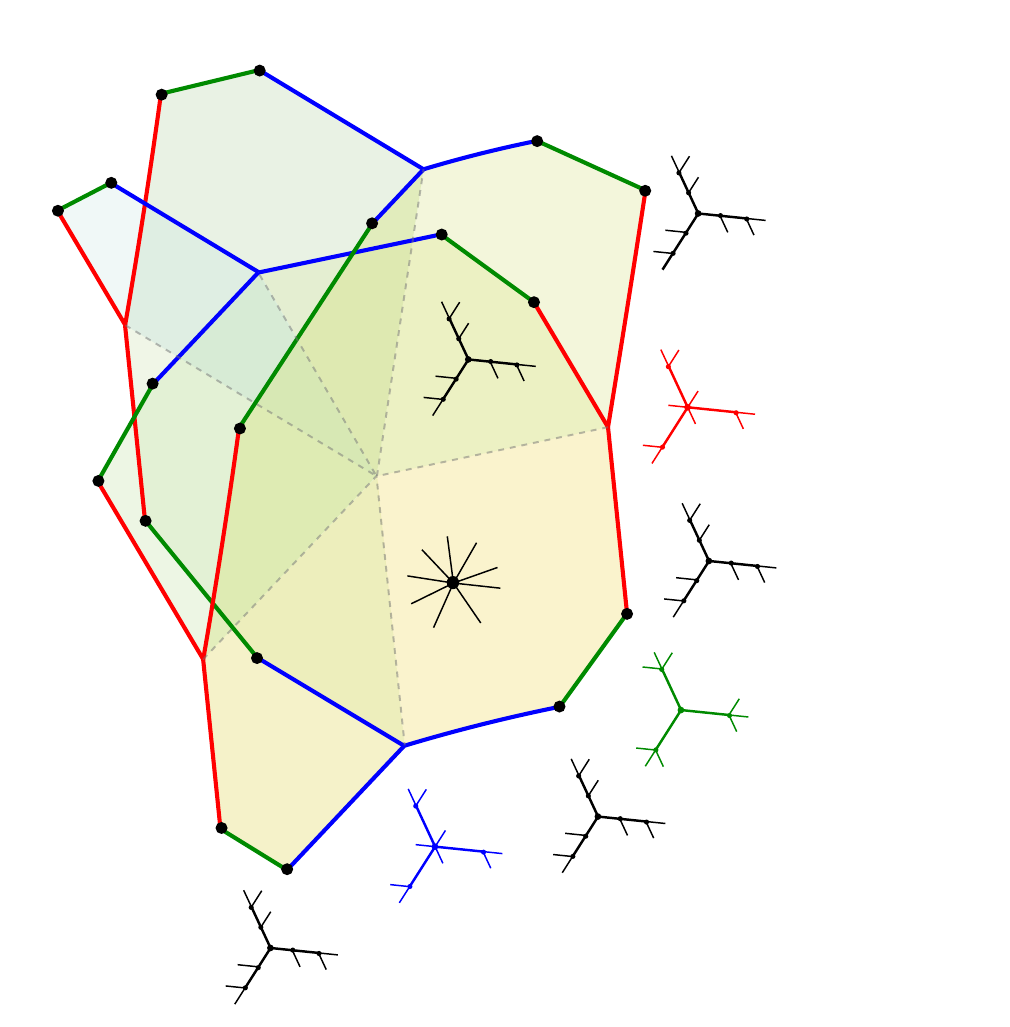}}%
    \put(0.5,0.45){\color[rgb]{0,0,0}\makebox(0,0)[lb]{\footnotesize{$S(3,2)$}}}%
    
    \put(0.76,0.60){\color[rgb]{1,0,0}\makebox(0,0)[lb]{\tiny{$y^0$}}}%
    \put(0.73,0.56){\color[rgb]{1,0,0}\makebox(0,0)[lb]{\tiny{$z^0_1$}}}%
    \put(0.86,0.56){\color[rgb]{1,0,0}\makebox(0,0)[lb]
    {\footnotesize{$\langle s_0 \rangle \begin{pmatrix}0 & 1 \\1 & 0 \end{pmatrix}$}}}%
    
    \put(0.78,0.45){\color[rgb]{0,0,0}\makebox(0,0)[lb]{\tiny{$y^0$}}}%
    \put(0.75,0.41){\color[rgb]{0,0,0}\makebox(0,0)[lb]{\tiny{$z^0_1$}}}%
    \put(0.72,0.41){\color[rgb]{0,0,0}\makebox(0,0)[lb]{\tiny{$z^0_2$}}}%
    \put(0.88,0.41){\color[rgb]{0,0,0}\makebox(0,0)[lb]{\footnotesize{$\begin{pmatrix}0 & 1 \\1 & 0 \end{pmatrix}$}}}%
    
    \put(0.67,0.19){\color[rgb]{0,0,0}\makebox(0,0)[lb]{\tiny{$y^0$}}}%
    \put(0.64,0.15){\color[rgb]{0,0,0}\makebox(0,0)[lb]{\tiny{$z^0_2$}}}%
    \put(0.61,0.15){\color[rgb]{0,0,0}\makebox(0,0)[lb]{\tiny{$z^0_1$}}}%
    \put(0.77,0.15){\color[rgb]{0,0,0}\makebox(0,0)[lb]{\footnotesize{$\begin{pmatrix}1 &0  \\0 &1 \end{pmatrix}$}}}%
    
    \put(0.51,0.16){\color[rgb]{0,0,1}\makebox(0,0)[lb]{\tiny{$y^0$}}}%
    \put(0.48,0.12){\color[rgb]{0,0,1}\makebox(0,0)[lb]{\tiny{$z^0_2$}}}%
    \put(0.48,0.03){\color[rgb]{0,0,1}\makebox(0,0)[lb]{\footnotesize{$\langle s_0 \rangle \begin{pmatrix}1 &0  \\0 &1 \end{pmatrix}$}}}%

    \put(0.34,0.06){\color[rgb]{0,0,0}\makebox(0,0)[lb]{\tiny{$y^0$}}}%
    \put(0.31,0.02){\color[rgb]{0,0,0}\makebox(0,0)[lb]{\tiny{$z^0_2$}}}%
    \put(0.28,0.02){\color[rgb]{0,0,0}\makebox(0,0)[lb]{\tiny{$z^1_1$}}}%
    \put(0.08,0.05){\color[rgb]{0,0,0}\makebox(0,0)[lb]{\footnotesize{$\begin{pmatrix}\zeta^2 &0  \\0 &1 \end{pmatrix}$}}}%

    \put(0.77,0.79){\color[rgb]{0,0,0}\makebox(0,0)[lb]{\tiny{$y^0$}}}%
    \put(0.74,0.75){\color[rgb]{0,0,0}\makebox(0,0)[lb]{\tiny{$z^0_1$}}}%
    \put(0.71,0.75){\color[rgb]{0,0,0}\makebox(0,0)[lb]{\tiny{$z^1_2$}}}%
    \put(0.87,0.75){\color[rgb]{0,0,0}\makebox(0,0)[lb]{\footnotesize{$\begin{pmatrix}0 & \zeta^2  \\ 1 & 0 \end{pmatrix}$}}}%

    \put(0.53,0.65){\color[rgb]{0,0,0}\makebox(0,0)[lb]{\tiny{$y^0$}}}%
    \put(0.51,0.605){\color[rgb]{0,0,0}\makebox(0,0)[lb]{\tiny{$z^0_1$}}}%
    \put(0.48,0.605){\color[rgb]{0,0,0}\makebox(0,0)[lb]{\tiny{$z^2_2$}}}%
    \put(0.30,0.63){\color[rgb]{0,0,0}\makebox(0,0)[lb]{\footnotesize{$\begin{pmatrix}0 & \zeta  \\1 & 0 \end{pmatrix}$}}}%
    
    \put(0.75,0.30){\color[rgb]{0,0.54117647,0}\makebox(0,0)[lb]{\tiny{$y^0$}}}%
    \put(0.72,0.33){\color[rgb]{0,0.54117647,0}\makebox(0,0)[lb]{\tiny{$z^0_1$}}}%
    \put(0.72,0.26){\color[rgb]{0,0.54117647,0}\makebox(0,0)[lb]{\tiny{$z^0_2$}}}%
    \put(0.85,0.26){\color[rgb]{0,0.54117647,0}\makebox(0,0)[lb]{\footnotesize{$\langle s_1 \rangle \begin{pmatrix}1 &0  \\0 &1 \end{pmatrix}$}}}%
      \end{picture}%
        \caption{A projection of the polytope $\Delta^3_2$, with some $\Delta$-faces labeled by both the boundary strata in $\L^3_2$ and by the corresponding $\T$-cosets in $S(3,2)$.}
   \label{fig:Delta32example}
   \end{figure}
\endgroup%

The precise definition of $\Delta$-faces is given in Definition~\ref{def:face}, but, for now, we simply remark that they are themselves polytopal complexes; in particular, the $0$-dimensional $\Delta$-faces are the vertices labeled in black, the the $1$-dimensional $\Delta$-faces are the line segments labeled in green as well as the $\mathsf{Y}$-shapes labeled in red and blue, and there is only a single $2$-dimensional $\Delta$-face, which is the entire complex $\Delta^3_2$.  The key observation is that a $1$-dimensional $\Delta$-face may be adjacent to either two or three $0$-dimensional $\Delta$-faces.  This corresponds precisely to a fact about boundary strata and about $\T$-cosets:
\begin{itemize}
\item in $\L^3_2$, a $1$-dimensional boundary stratum contains either two or three different $0$-dimensional boundary strata, depending on whether there is a light orbit on the central component;
\item in $S(3,2)$, the $1$-dimensional $\T$-cosets $\langle t \rangle \cdot A$ have either two or three elements, depending on whether $t$ is an adjacent transposition or $t=s_0$. 
\end{itemize}
These are special cases of the combinatorial phenomena that arise more generally in what follows.
\end{example}

\subsection*{Future and related work} In future work, we hope to probe further geometric and combinatorial properties of $\L^r_n$ to more fully exploit its proximity to being a toric variety.  For example, one could study the tropical manifestation of $\L^n_r$ (along the lines of the work carried out in \cite{Chan_2021, Ulirsch_2015, cavalieri2016moduli} for $\M_{g,n}$ and Hassett spaces), its symmetries (along the lines of \cite{massarenti2014, massarenti2015automorphisms}), or the combinatorial structure of its Chow ring (along the lines of \cite{keel1992intersection, bergstrom2013cohomology,bergstrom2014cohomology, Kannan_2021}). The birational geometry of $\L^r_n$ would also be very interesting to study.  In the case of $\M_{0,n}$, the parallelism with toric varieties motivated Fulton's famous F-conjecture as well as the (now disproven) conjecture that the Cox ring of $\M_{0,n}$ is finitely-generated \cite{CastravetTevelev, GonzalezKaru}.  Perhaps the fact that $\L^r_n$ is ``more toric" than $\M_{0,n}$---in that it is combinatorially encoded by a polyhedral object---would make these birational-geometric questions more amenable to study in this setting.

\subsection*{Plan of the paper}  In Section~\ref{sec:modulispace}, we precisely define the objects parameterized by the moduli space $\L^r_n$.  The fact that there indeed exists a fine moduli space parameterizing these objects is the content of Section~\ref{sec:construction}; readers wishing to accept the existence of $\L^r_n$ are encouraged to skip that section and proceed directly to the combinatorial material that follows.  Section~\ref{sec:chains} defines the combinatorial data of decorated nested chains of subsets of $\{1,\ldots, n\}$, and Sections \ref{sec:boundarystrata}, \ref{sec:S(r,n)}, and \ref{sec:permutohedral-complex} show that this data can be used to describe the boundary strata, $\T$-cosets, and $\Delta$-faces, respectively.  Finally, in Section~\ref{sec:conclusions}, we combine the results of the previous three sections to deduce Theorem~\ref{thm:main}, and we observe that the boundary strata, $\T$-cosets, and $\Delta$-faces also have product decompositions and $S(r,n)$-actions that are respected by the bijections between them.

\subsection*{Acknowledgments}  The authors would like to thank Dusty Ross for the conversations that inspired the project, and ICERM for hosting the workshop ``Women in Algebraic Geometry" at which the collaboration began.  E.C. was supported by NSF DMS grant 1810969, D.H. was supported by NSF DMS grant 1439786, and R.R. was supported by NSF DMS grant 1703308.

\section{The moduli space}
\label{sec:modulispace}

Fix integers $r \geq 2$ and $n \geq 0$.  Denote by $\mu_r \subseteq \C^*$ the cyclic group of $r$th roots of unity, and denote $\Z_r = \{0,1, \ldots, r-1\}$.  

\subsection{Objects and families}
\label{sec:defLrn}

We begin by specifying the underlying curves of the objects we are interested in parameterizing.  Throughout, varieties are considered over the field $\C$. 

\begin{definition}
\label{def:pinwheelcurve}
An {\bf $r$-pinwheel curve} is a tree of projective lines meeting at nodes, consisting of a central projective line from which $r$ equal-length chains of projective lines (``spokes") emanate.  If each of these spokes has $k$ components, we say that the pinwheel curve has {\bf length} $k$; in the case where $k=0$, the curve is simply $\P^1$.
\end{definition}

The objects of our moduli space are built from $r$-pinwheel curves as follows.

\begin{definition}
An {\bf $(r,n)$-curve} consists of the following data:
\begin{itemize}
    \item an $r$-pinwheel curve $C$;
    \item an order-$r$ automorphism $\sigma: C \rightarrow C$;
    \item two distinct fixed points $x^+$ and $x^-$ of $\sigma$;
    \item $n$ labeled $r$-tuples $(z_1^{0}, \ldots, z_1^{r-1}), \ldots, (z_n^{0}, \ldots, z_n^{r-1})$ of points $z_i^j \in C$ satisfying
    \[\sigma(z_i^j) = z_i^{j+1 \!\!\!\!\mod r}\]
    for each $i$ and $j$, where we allow that $z_i^j = z_{i'}^{j'}$ and that $z_i^j = x^{\pm}$;
    \item an additional labeled $r$-tuple $(y^{0}, \ldots, y^{r-1})$ satisfying
    \[\sigma(y^\ell) = y^{\ell+1 \!\!\!\!\mod r}\]
    for each $\ell$, whose elements are distinct from one another as well as from $x^{\pm}$ and $z_i^j$.
\end{itemize}

\end{definition}

We refer to an $(r,n)$-curve as {\bf stable} if each irreducible component of $C$ contains at least two ``heavy" points---where the ``heavy" points are the half-nodes and the points $y^{\ell}$---and any irreducible component with exactly two heavy points contains at least one of the ``light" points $x^{\pm}$ or $z_i^j$.  (This is a special case of the stability condition for Hassett spaces, which will play a major role in the construction of the moduli space $\overline{\calL}^{r}_n$ in Section \ref{sec:construction} below.)

It is straightforward to see that the stability condition forces $y^0, \ldots, y^{r-1}$ to lie on the $r$ leaves of the pinwheel curve, in which case $\sigma$ must consist of a rotation taking each spoke of the pinwheel to another, and $x^{\pm}$ must both lie on the central component, which we denote $C_{\bullet}$; see Figure~\ref{fig:elementofLrn}.  

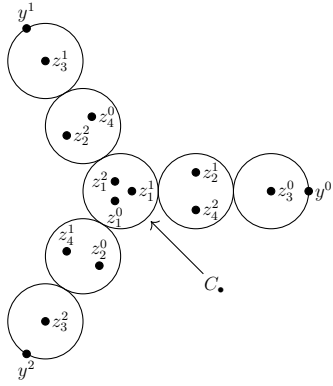
\begin{figure}[h!]
\begin{center}
    \begin{tikzpicture}[scale=1,  every node/.style={scale=0.6}]
    
    \filldraw (0.15,0) circle (0.05cm) node[right]{$z^1_1$};
    \filldraw (-0.075,0.1299) circle (0.05cm) node[left]{$z^2_1$};
    \filldraw (-0.075,-0.1299) circle (0.05cm) node[below]{$z^0_1$};
    
    \filldraw(1,0.25) circle (0.05cm) node[right]{$z^1_2$};
    \filldraw(-0.7165, 0.741) circle (0.05cm) node[right]{$z^2_2$};
    \filldraw(-0.283, -0.991) circle (0.05cm) node[above]{$z^0_2$};
    
    \filldraw(1,-0.25) circle (0.05cm) node[right]{$z^2_4$};
    \filldraw(-0.383,0.991) circle (0.05cm) node[right]{$z^0_4$};
    \filldraw(-0.717,-0.8) circle (0.05cm) node[above]{$z^1_4$};
    
    \filldraw (2,0) circle (0.05cm) node[right]{$z^0_3$};
    \filldraw (-1,1.732) circle (0.05cm) node[right]{$z^1_3$};
    \filldraw (-1,-1.732) circle (0.05cm) node[right]{$z^2_3$};
    
    \draw (0,0) circle (0.5cm);
    \draw (1,0) circle (0.5cm);
    \draw (2,0) circle (0.5cm);
    \filldraw (2.5,0) circle (0.05cm) node[right]{$y^0$};

    \draw (-0.5,0.866) circle (0.5cm);
    \draw (-1,1.732) circle (0.5cm);
    \filldraw (-1.25,2.165) circle (0.05cm) node[above]{$y^1$};

    \draw (-0.5,-0.866) circle (0.5cm);
    \draw (-1,-1.732) circle (0.5cm);
    \filldraw (-1.25,-2.165) circle (0.05cm) node[below]{$y^2$};
    
    \draw[->] (1.1,-1.1) -- (0.4, -0.4);
    \node at (1.25, -1.25) {$C_{\bullet}$};
    \end{tikzpicture}
\end{center}
\caption{A stable length-two $(4,3)$-curve, where each circle represents a $\P^1$ component and $\sigma$ is the rotational automorphism.  Not pictured are the marked points $x^+$ and $x^-$, which are the two fixed points of $\sigma$ and must both lie on the central component $C_{\bullet}$.}
\label{fig:elementofLrn}
\end{figure}

Up to an automorphism of $C$, one can assume that $x^+$ and $x^-$ are the points $\infty$ and $0$, respectively, in the central component $C_{\bullet} \cong \P^1$, and that the node at which the $y^0$-spoke meets $C_{\bullet}$ is the point $1 \in C_{\bullet} \cong \P^1$.  Under this identification, the fact that $\sigma$ has order $r$ ensures that the $y^1$-spoke meets $C_{\bullet}$ at $\zeta \in \P^1$, where $\zeta$ is a primitive $r$th root of unity.  Once $\zeta$ is chosen, the points at which the remaining spokes meet $C_{\bullet}$ are determined, but $\zeta$ itself can be freely chosen to be any primitive $r$th root of unity.  We encode this choice in the following terminology.

\begin{definition}
Let $\zeta$ be a primitive $r$th root of unity.  Given a stable $(r,n)$-curve, let $p^\ell \in C_{\bullet}$ be the point at which the $y^\ell$-spoke meets the central component, for each $\ell \in \Z_r$.  We say that the curve has {\bf type $\zeta$} if, under the unique automorphism of the central component $C_{\bullet}$ that sends
\[x^+ \mapsto \infty,\;\; x^- \mapsto 0, \;\; p^0 \mapsto 1,\]
we have
\[p^\ell \mapsto \zeta^\ell\] for all $\ell \in \Z_r$.
\end{definition}

\begin{remark}
A stable $(r,n)$-curve of type $\zeta$ can be viewed as a curve with an action of the cyclic group $\mu_r$, in which the generator $\zeta \in \mu_r$ acts by the automorphism $\sigma$.
\end{remark}

Having defined the objects of interest, we now specify the notions of family and morphism of families.

\begin{definition}
A {\bf family of stable $(r,n)$-curves over a base scheme $B$} is a flat, proper morphism $\pi: \mathcal{C} \rightarrow B$ equipped with an order-$r$ automorphism $\sigma$ of $\mathcal{C}$ such that $\pi \circ \sigma = \pi$, and sections $x^{\pm}, \{z_i^j\}$, and $\{y^\ell\}$ of $\pi$ such that for any geometric point $b \in B$, the fiber
\[\bigg( \pi^{-1}(b); \sigma\big|_{\pi^{-1}(b)}; x^{\pm}(b), \{y^\ell(b)\}, \{z_i^j(b)\}\bigg)\]
is a stable $(r,n)$-curve.  If, furthermore, each fiber has type $\zeta$, we say that the family has type $\zeta$.
\end{definition}

\begin{remark}
If the base $B$ is connected, then every fiber has the same type, so the type of the family can be deduced from considering any single fiber.
\end{remark}

\begin{definition}
Given families
\[(\pi: \mathcal{C} \rightarrow B; \sigma; x^{\pm}, \{y^\ell\}, \{z_i^j\}) \, \text{ and } \, (\pi': \mathcal{C}' \rightarrow B; \sigma'; X^{\pm}, \{Y^\ell\}, \{Z_i^j\})\]
over the same base $B$, a {\bf morphism} of families is a morphism $s: \mathcal{C} \rightarrow \mathcal{C}'$ satisfying
\begin{itemize}
    \item $\pi' \circ s = \pi$;
    \item $\sigma' \circ s = s \circ \sigma$;
    \item $X^{\pm} = s \circ x^{\pm}$, $Z_i^j = s \circ z_i^j$, and $Y^\ell = s \circ y^\ell$.
\end{itemize}
\end{definition}

The goal of Section~\ref{sec:construction} is to identify a fine moduli space representing the moduli problem specified above, which we denote as follows.

\begin{definition} \label{def:Lrnzeta}
For any integers $r \geq 2$ and $n \geq 0$, we denote by $\L^r_n(\zeta)$ the moduli space of isomorphism classes of stable $(r,n)$-curves, and we denote by $\L^r_n = \bigsqcup_\zeta \L^r_n(\zeta)$ the moduli space of all isomorphism classes of stable $(r,n)$-curves.
\end{definition}

The reader willing to accept the existence of such a fine moduli space may wish to skip Section~\ref{sec:construction} entirely and proceed directly to the combinatorics in Section ~\ref{sec:chains}, and they are encouraged to do so.  First, however, we must describe the boundary strata in $\L^r_n$, which are critical to the combinatorics that follow.

\subsection{Boundary strata}
\label{subsec:boundarystrata}

In any moduli space of curves, a boundary stratum is defined as the closure of the locus of curves of a fixed topological type.  More precisely, we have the following definition of boundary strata in our case.

\begin{definition}
\label{def:boundarystratum}
Any $(C; \sigma; x^{\pm}, \{y^\ell\}, \{z_i^j\}) \in \L^r_n(\zeta)$ has an associated {\bf dual graph} consisting of
\begin{itemize}
    \item a vertex $v$ for each irreducible component $C_v$ of $C$;
    \item an edge between vertices $v$ and $w$ if the corresponding irreducible components $C_v$ and $C_w$ meet at a node;
    \item a half-edge attached to the vertex $v$ for each marked point on $C_v$, labeled by the name $x^{\pm}, y^\ell$, or $z_i^j$ of the marked point.
\end{itemize}
Given such a dual graph $G$, the corresponding {\bf boundary stratum} $S_G \subseteq \L^r_n(\zeta)$ is defined as the closure of the set of curves with dual graph $G$.
\end{definition}

In particular, passing to the closure means that $S_G$ includes also degenerations of curves with dual graph $G$; for example, see Figure~\ref{fig:dualgraph}.  It follows that one can detect in terms of dual graphs when there is an inclusion of boundary strata: we have $S_G \subseteq S_H$ if and only if $H$ can be obtained from $G$ by edge-contraction of some subset of the edges, a procedure that corresponds geometrically to degeneration of a curve in $S_H$ to a curve in $S_G$.  For example, if $G$ is the top dual graph in Figure~\ref{fig:dualgraph} and $G_1$ and $G_2$ are the two dual graphs depicted below it, then we have $S_G \subseteq S_{G_1}$ and $S_G \subseteq S_{G_2}$, corresponding to the fact that both $G_1$ and $G_2$ can be obtained by edge-contraction from $G$.

\begin{figure}[h]
    \centering
    \includegraphics[scale=0.8]{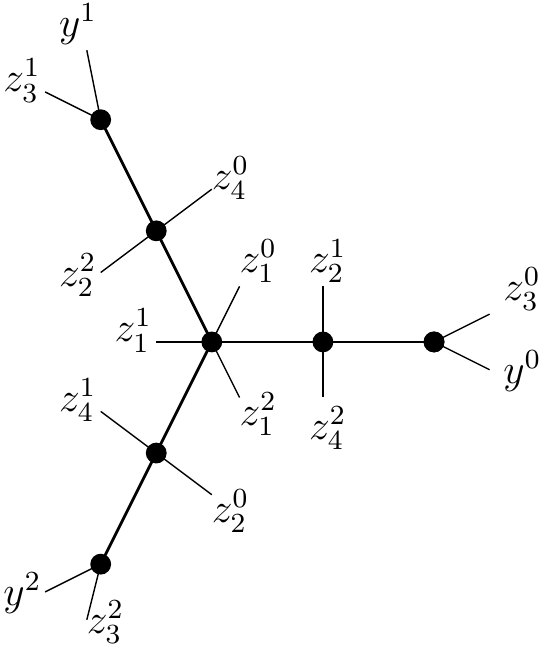}
\begin{subfigure}{.45\textwidth}
  \centering
  \includegraphics[scale=1]{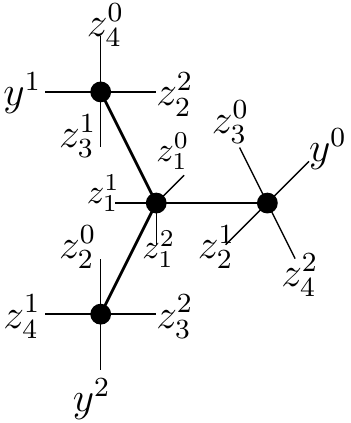}
\end{subfigure}%
\begin{subfigure}{.45\textwidth}
  \centering
  \includegraphics[scale=1]{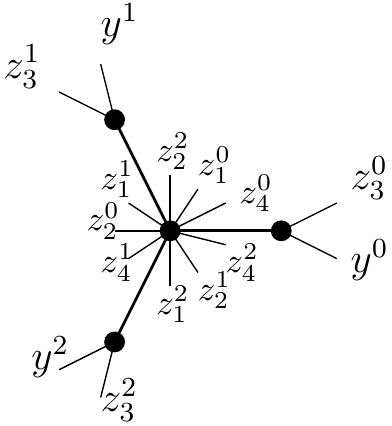}
\end{subfigure}
    \caption{The dual graph $G$ of the curve in Figure~\ref{fig:elementofLrn}, and below it, the dual graphs of two other curves in the boundary stratum $S_G$, corresponding to two edge-contractions of $G$. }
    \label{fig:dualgraph}
\end{figure}

\section{Construction of the moduli space}
\label{sec:construction}

We construct the moduli space of stable $(r,n)$-curves as a closed subscheme of a more well-known moduli space constructed by Hassett in \cite{HASSETT2003316}, so we begin by recalling the necessary definitions from the theory of Hassett spaces.  Throughout, we denote
\[[n] = \{1,2,\ldots, n\}\]
and
\[[n]_0 = \{0, 1, \ldots, n\}.\]

\subsection{Hassett spaces and maps between them}
\label{subsec:hassett-maps}

For any $g \geq 0$, $n \ge 0$ and any weight vector ${\w = (w_1, \ldots, w_n) \in (\Q \cap (0, 1])^{n}}$ such that $2 g + \sum_{i = 1}^{n} w_i > 2$, the associated Hassett space is a smooth Deligne--Mumford stack $\M_{g, \w}$ that is an alternate modular compactification of the moduli space $\calM_{g, n}$ of smooth projective curves of genus $g$ with $n$ distinct marked points.

Specifically, elements of $\overline{\calM}_{g, \w}$ are tuples $(C; q_1, \ldots, q_n)$, where $C$ is a projective curve of arithmetic genus $g$ and only nodes as singularities, and $q_1, \ldots, q_n \in C$ are marked points with weights $w_1, \ldots, w_n$, satisfying the following two conditions:
\begin{itemize}
    \item the sum of the weights of any collection of coinciding marked points is at most $1$;
    \item for each irreducible component $D$ of $C$, if $\{q_i\}_{i \in I_D}$ for $I_D \subseteq [n]$ are the marked points of $D$ and $n_D$ is the number of half-nodes in $D$, then
    \[2g-2+n_D + \sum_{i \in I_D} w_i >0.\]
\end{itemize}
We refer to elements of $\overline{\calM}_{g, \w}$ as {\bf $\w$-stable curves}.  Note that the usual moduli space of curves $\M_{g,n}$ is recovered by taking $w_i=1$ for all $i \in [n]$.

The special case of this construction that is relevant for the current work is when $g = 0$ and the weight vector is
\begin{equation}
\label{eq:w}
\w = 
\left(\frac{1}{2}+ \varepsilon,  \frac{1}{2} + \varepsilon , \underset{\text{\footnotesize $r$}}{\underbrace{1, \dots, 1}},  \underset{\text{\footnotesize $nr$}}{\underbrace{\varepsilon,   \dots,  \varepsilon}}\right)
\end{equation}
for any fixed $0 < \varepsilon < 1/(nr+2)$. Let
\[\overline{\calM}^r_n := \overline{\calM}_{0, \w}\]
denote the Hassett space with this weight vector, which is a smooth projective scheme.  Let $\calC^r_n$ denote the universal curve over $\M^r_n$.  Suggestively, we denote the marked points of $\M^r_n$ with weights $1/2 + \varepsilon$ by $x^+$ and $x^-$, the $r$ marked points with weight $1$ by $y^0, \ldots, y^{r-1}$, and the $nr$ marked points with weight $\varepsilon$ by $z_i^j$ with $i\in [n]$ and $j \in \Z_r$.

\begin{remark}\label{lem:M1nobjects}  It is possible to let $r = 1$ in this construction, in which case $\M^1_n$ is the Hassett space with weight vector $\w = (1/2 + \varepsilon, 1/2 + \varepsilon, 1, \varepsilon, \ldots, \varepsilon)$.  We omit the $\Z_r$-superscripts on the marked points in this case, so an element of $\M^1_n$ is denoted $(C;x^+,x^-,y,z_1,\ldots, z_n)$.  Here, the dual graph of $C$ is a chain with $y$ on one leaf and (if $C$ is reducible) $x^+$ and $x^-$ on the unique other leaf; see an example of an element in $\overline{\calM}^{1}_2$ in Figure \ref{fig:M1n-bar}.  While some readers may recognize such chains of projective lines as the underlying curves of the elements in Losev--Manin space, we stress that $\overline{\calM}^{1}_n$ is {\it not} Losev--Manin space, which can instead be described as $\overline{\calM}_{0,\w'}$ with $\w'=\left(1,1, \varepsilon, \ldots, \varepsilon \right)$.
\end{remark}

\begin{figure}[h!]
    \begin{center}
    \begin{tikzpicture}[scale=1,  every node/.style={scale=0.6}]
    \filldraw (-0.075,-0.1299) circle (0.05cm) node[below]{$x^{+}$};
    
    \filldraw (-0.075,0.15) circle (0.05cm) node[above]{$x^{-}$};

    \filldraw(1,0) circle (0.05cm) node[right]{$z_2$};
    
    \draw (0,0) circle (0.5cm);
    \draw (1,0) circle (0.5cm);
    \draw (2,0) circle (0.5cm);
    \filldraw (2,0) circle (0.05cm) node[right]{$z_1$};
    \filldraw (2.5,0) circle (0.05cm) node[right]{$y$};
    \end{tikzpicture}
    \end{center}
    \caption{A typical curve in $\overline{\calM}^{1}_2$. }
    \label{fig:M1n-bar}
    \end{figure}

When $r\ge2$, the choice of the weight vector $\w$ ensures that the $r$-pinwheel curves $C$ that underlie stable $(r,n)$-curves are elements of $\M^r_n$.  However, not every element of $\M^r_n$ is such a curve.  Thus, the goal of the next subsection is to identify a closed subscheme
\[\L^r_n \subseteq \M^r_n\]
that is a fine moduli space for stable $(r,n)$-curves.  The definition of this subscheme involves two families of morphisms between Hassett spaces, which we now describe.

First, for every map $\alpha \colon [n]_0  \to \mathbb{Z}_r$, there is a morphism
\[\pi_\alpha \colon \M^r_n \to \M^1_n\] given on $\mathbb{C}$-points by
\[\pi_\alpha (C; x^{\pm}, \{y^\ell\}, \{z_i^j\}) = (C; x^{\pm}, y^{\alpha(0)}, z_1^{\alpha(1)}, \dots, z_n^{\alpha(n)}).\] That is, $\pi_\alpha$ forgets the marked points not in $\{x^{\pm}, y^{\alpha(0)}, z_1^{\alpha(1)}, \ldots, z_{n}^{\alpha(n)}\}$ and contracts any resulting unstable components of $C$; see Figure \ref{fig:pi-alpha}.

\begin{center}
\begin{figure}[h!]
\centering
\begin{tikzpicture}[scale=1,  every node/.style={scale=0.6}]
    
    \filldraw[gray, opacity=0.5] (0.15,0) circle (0.05cm) node[right]{$z^1_1$};
    \filldraw[gray, opacity=0.5] (-0.075,0.1299) circle (0.05cm) node[left]{$z^2_1$};
    \filldraw (-0.075,-0.1299) circle (0.05cm) node[below]{$z^0_1$};
    
    \filldraw[gray, opacity=0.5](1,0.25) circle (0.05cm) node[right]{$z^1_2$};
    \filldraw(-0.7165, 0.741) circle (0.05cm) node[right]{$z^2_2$};
    \filldraw[gray, opacity=0.5](-0.283, -0.991) circle (0.05cm) node[above]{$z^0_2$};
    
    \filldraw(1,-0.25) circle (0.05cm) node[right]{$z^2_4$};
    \filldraw[gray, opacity=0.5](-0.383,0.991) circle (0.05cm) node[right]{$z^0_4$};
    \filldraw[gray, opacity=0.5](-0.717,-0.8) circle (0.05cm) node[above]{$z^1_4$};
    
    \filldraw[gray, opacity=0.5] (2,0) circle (0.05cm) node[right]{$z^0_3$};
    \filldraw (-1,1.732) circle (0.05cm) node[right]{$z^1_3$};
    \filldraw[gray, opacity=0.5] (-1,-1.732) circle (0.05cm) node[right]{$z^2_3$};
    
    \draw (0,0) circle (0.5cm);
    \draw (1,0) circle (0.5cm);
    \draw (2,0) circle (0.5cm);
    \filldraw (2.5,0) circle (0.05cm) node[right]{$y^0$};
    
    \draw[->] (3.5, 0) -- node[above] {$\pi_{\alpha}$}  (4.3,0);
    
    \draw(-0.5,0.866) circle (0.5cm);
    \draw (-1,1.732) circle (0.5cm);
    \filldraw[gray, opacity=0.5] (-1.25,2.165) circle (0.05cm) node[above]{$y^1$};

    \draw (-0.5,-0.866) circle (0.5cm);
    \draw (-1,-1.732) circle (0.5cm);
    \filldraw[gray, opacity=0.5] (-1.25,-2.165) circle (0.05cm) node[below]{$y^2$};

    \end{tikzpicture}
\qquad
\begin{tikzpicture}[scale=1,  every node/.style={scale=0.6}]
    
    \filldraw[gray, opacity=0.5] (0.15,0) circle (0.05cm) node[right]{$z^1_1$};
    \filldraw[gray, opacity=0.5] (-0.075,0.1299) circle (0.05cm) node[left]{$z^2_1$};
    \filldraw (-0.075,-0.1299) circle (0.05cm) node[below]{$z^0_1$};
    
    \filldraw[gray, opacity=0.5](1,0.25) circle (0.05cm) node[right]{$z^1_2$};
    \filldraw(-0.7165, 0.741) circle (0.05cm) node[right]{$z^2_2$};
    \filldraw[gray, opacity=0.5](-0.283, -0.991) circle (0.05cm) node[above]{$z^0_2$};
    
    \filldraw(1,-0.25) circle (0.05cm) node[right]{$z^2_4$};
    \filldraw[gray, opacity=0.5](-0.383,0.991) circle (0.05cm) node[right]{$z^0_4$};
    \filldraw[gray, opacity=0.5](-0.717,-0.8) circle (0.05cm) node[above]{$z^1_4$};
    
    \filldraw[gray, opacity=0.5] (2,0) circle (0.05cm) node[right]{$z^0_3$};
    \filldraw (-1,1.732) circle (0.05cm) node[right]{$z^1_3$};
    \filldraw[gray, opacity=0.5] (-1,-1.732) circle (0.05cm) node[right]{$z^2_3$};
    
    \draw (0,0) circle (0.5cm);
    \draw(1,0) circle (0.5cm);
    \draw[fill=gray, opacity=0.1] (2,0) circle (0.5cm);
    \filldraw (2.5,0) circle (0.05cm) node[right]{$y^0$};

    \draw[fill=gray, opacity=0.1](-0.5,0.866) circle (0.5cm);
    \draw[fill=gray, opacity=0.1] (-1,1.732) circle (0.5cm);
    \filldraw[gray, opacity=0.5] (-1.25,2.165) circle (0.05cm) node[above]{$y^1$};

    \draw[fill=gray, opacity=0.1] (-0.5,-0.866) circle (0.5cm);
    \draw[fill=gray, opacity=0.1] (-1,-1.732) circle (0.5cm);
    \filldraw[gray, opacity=0.5] (-1.25,-2.165) circle (0.05cm) node[below]{$y^2$};

    \end{tikzpicture}
    \qquad
    \begin{tikzpicture}[scale=1,  every node/.style={scale=0.6}]  
    
    \filldraw (-0.25,0.41) circle (0.05cm) node[above]{$z^1_3=z_2^2$};
    \filldraw (-0.075,-0.25) circle (0.05cm) node[right]{$z^0_1$};
    
    \filldraw[gray, opacity=0](1,0.25) circle (0.05cm) node[right]{$z^1_2$};
    \filldraw[gray, opacity=0](-0.283, -0.991) circle (0.05cm) node[above]{$z^0_2$};
    
    \filldraw(1,-0.25) circle (0.05cm) node[right]{$z^2_4$};
    \filldraw[gray, opacity=0](-0.383,0.991) circle (0.05cm) node[right]{$z^0_4$};
    \filldraw[gray, opacity=0](-0.717,-0.8) circle (0.05cm) node[above]{$z^1_4$};
    
    \filldraw[gray, opacity=0] (2,0) circle (0.05cm) node[right]{$z^0_3$};
    \filldraw[gray, opacity=0] (-1,-1.732) circle (0.05cm) node[right]{$z^2_3$};
    
    \draw (0,0) circle (0.5cm);
    \draw(1,0) circle (0.5cm);
    \draw[fill=gray, opacity=0] (2,0) circle (0.5cm);
    \filldraw (1.5,0) circle (0.05cm) node[right]{$y^0$};

    \draw[fill=gray, opacity=0](-0.5,0.866) circle (0.5cm);
    \draw[fill=gray, opacity=0] (-1,1.732) circle (0.5cm);
    \filldraw[gray, opacity=0] (-1.25,2.165) circle (0.05cm) node[above]{$y^1$};

    \draw[fill=gray, opacity=0] (-0.5,-0.866) circle (0.5cm);
    \draw[fill=gray, opacity=0] (-1,-1.732) circle (0.5cm);
    \filldraw[gray, opacity=0] (-1.25,-2.165) circle (0.05cm) node[below]{$y^2$};
        \end{tikzpicture}

\caption{A visualization of the map $\pi_\alpha$ in the case where $\alpha(0)=0$, $\alpha(1)=0$, $\alpha(2) = 2$, $\alpha(3)=1$, $\alpha(4)=2$. The marked points $x^{\pm}$ are on the central component and are not pictured. The faded marked points are forgotten under $\pi_{\alpha}$ and the faded solid circles indicate contracted components as a result of forgetting some of the marked points.}
\label{fig:pi-alpha}
\end{figure}
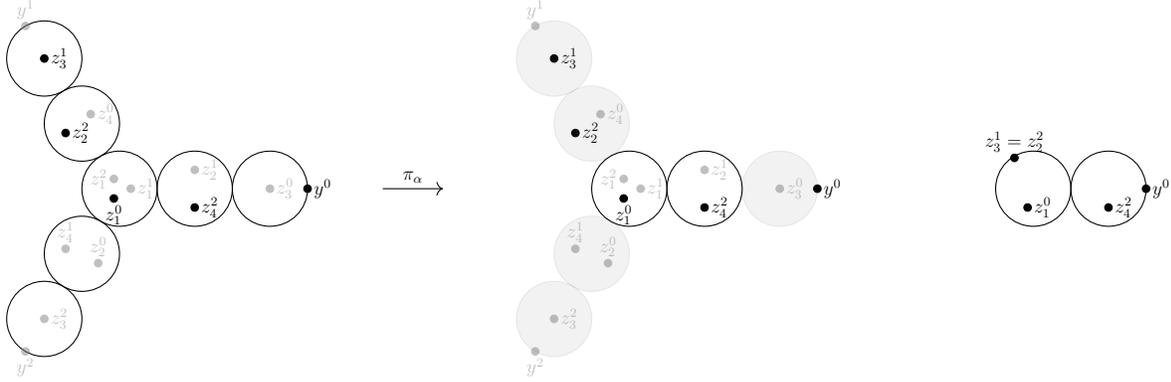
\end{center}

In fact, one can upgrade $\pi_{\alpha}$ from a morphism on $\C$-points to a morphism on families.  Namely, for any base scheme $B$ and any map $f \colon B \to \M^r_n$, there are associated maps \[f_\alpha \colon B \to \M^1_n \qquad \text{ and } \qquad \psi_{\alpha,f} \colon f^*{\mathcal{C}^r_n} \to f_\alpha ^*{\mathcal{C}^1_n}\]
satisfying the following conditions:
\begin{itemize}
\item If $f^*{\mathcal{C}^r_n}$ is marked by the sections $x^{\pm}, \{y^\ell\}, \{z_i^j\}$, then $f_\alpha^*\mathcal{C}^1_n$ is the stabilization of \[(f^*{\mathcal{C}^r_n}; x^{\pm}, y^{\alpha(0)}, z^{\alpha(1)}_1, \dots, z^{\alpha(n)}_n).\]
\item The map $\psi_{\alpha,f}$ is the contraction map, which contracts unstable components in each fiber.\end{itemize}
The key special case for what follows is when $B = \M^r_n$ and $f = \text{id}$, in which case $f_{\alpha} = \pi_{\alpha}$ and the map $\psi_{\alpha, \text{id}}$ is a birational morphism that we denote by $\psi_{\alpha}$ for simplicity. These maps then fit into the following diagram:
\begin{equation}
    \label{eq:def_psialpha}
\xymatrix{
\mathcal{C}^r_n \ar[d]\ar[r]^{\psi_{\alpha}} & \pi_{\alpha}^*\mathcal{C}^1_n \ar[dl]\ar[r] & \mathcal{C}^1_n\ar[d]\\
\M^r_n \ar[rr]^{\pi_{\alpha}} && \M^1_n.
}
\end{equation}

To define the other family of morphisms between Hassett spaces, we first note that there is a natural isomorphism between $\M_0^2$ and $\P^1$ given by associating $q \in \P^1$ to
\[(\P^1; \infty, 0, 1, q) \in \M_0^2.\]
Given this, for every $i \in \mathbb{Z}_r$, we can define a morphism
\[\lambda_i \colon \M^r_n \to \M^2_0 \cong \P^1\]
by sending the point $(C; x^{\pm}, \{y^\ell\}, \{z_i^j\}) \in \M^r_n$ to the element of $\P^1$ corresponding to the point $(C; x^+,x^-, y^{i},y^{i+1}) \in \M_0^2$.

As above, the morphism $\lambda_i$ can be described functorially: to any scheme $B$ and any map $f \colon B \to \M^r_n$, we associate maps \[f_i \colon B \to \M^2_0 \cong \P^1 \qquad \text{ and } \qquad \varphi_{i,f} \colon f^*{\mathcal{C}^r_n} \to f_i^*{\mathcal{C}^2_0}\] satisfying the following conditions:
\begin{itemize} \item If $f^*{\mathcal{C}^r_n}$ is marked by the sections $x^{\pm}, \{y^\ell\}, \{z_i^j\}$, then $f_i^*\mathcal{C}^1_n$ is the stabilization of \[(f^*{\mathcal{C}^r_n}, x^+,x^-, y^i, y^{i+1}).\]
\item The map $\varphi_{i,f}$ is the standard contraction map, which is a birational morphism. \end{itemize}
Again, we will be particularly interested in the case where $B = \M^r_n$ and $f = \text{id}$, in which case $f_i = \lambda_i$ and we denote $\varphi_{i,\text{id}}$ by simply $\varphi_i$.  These maps then fit into a diagram as follows:
\begin{equation}
    \label{eq:def_phii}
\xymatrix{
\mathcal{C}^r_n \ar[d]\ar[r]^{\varphi_{i}} & \lambda_{i}^*\mathcal{C}^2_0 \ar[dl]\ar[r] & \mathcal{C}^2_0\ar[d]\\
\M^r_n \ar[rr]^{\lambda_{i}} && \M^2_0 = \P^1.
}
\end{equation}

Equipped with these morphisms between Hassett spaces, we are prepared to describe our moduli space.  In fact, we define separate moduli spaces
\[\L^r_n(\zeta) \subseteq \M^r_n\]
for each primitive $r$th root of unity $\zeta$, parameterizing stable $(r,n)$-curves of type $\zeta$.  The moduli spaces for different $r$th roots of unity are all isomorphic to one another, and the full moduli space
\[\L^r_n = \bigsqcup_{\zeta} \L^r_n(\zeta)\]
parameterizing all stable $(r,n)$-curves is a disjoint union of these isomorphic components.

\subsection{Construction of the moduli space} Let $\zeta \in \mathbb{C}$ be a primitive $r$th root of unity, and for any $\alpha: [n]_0 \rightarrow \Z_r$ and any $\k \in \Z_r$, denote by $\alpha+\k$ the function $[n]_0 \rightarrow \Z_r$ defined by \[(\alpha+\k)(x)=\Big(\alpha(x)+\k\Big) \!\!\mod r.\]
We define $\L^r_n(\zeta)$ as the subscheme of $\M^r_n$ obtained from the following fiber diagram:

\[
\begin{tikzcd}
\L^r_n(\zeta) \arrow[hookrightarrow]{r} \arrow{d} & \M^r_n \arrow{d}{\prod_{\alpha} \pi_\alpha \times \pi_{\alpha+1}\times \prod\limits_{i \in \Z_r} \lambda_i} \\
\prod\limits_{\alpha: [n]_{0} \to \Z_r} \M^1_n \times \{\zeta\} \arrow{r}{\Delta} & \left( \prod\limits_{\alpha: [n]_{0} \to \Z_r}  \M^1_n \times  \M^1_n \right) \times \prod\limits_{i \in \Z_r} \mathbb{P}^1.
\end{tikzcd}
\]
Here, the map $\Delta$ is the product of the diagonal embeddings of $\M^1_n$ into $\prod_{\alpha} \left( \M^1_n \times  \M^1_n \right)$ and of $\{\zeta\}$ into each $\mathbb{P}^1$, and the upper map is the inclusion of $\overline{\calL}^r_n(\zeta)$ into $\overline{\calM}^{r}_n$.

To interpret this fiber diagram more explicitly, note that elements of $\L^r_n(\zeta) \subseteq \M^r_n$ are defined by the condition that there are isomorphisms of pointed curves
\begin{equation}
    \label{eq:pialphacondition}
    (C; x^{\pm}, y^{\alpha(0)}, z_1^{\alpha(1)}, \ldots, z_n^{\alpha(n)}) \cong (C; x^{\pm}, y^{\alpha(0)+1}, z_1^{\alpha(1)+1}, \ldots, z_n^{\alpha(n)+1})
\end{equation}
for all $\alpha: [n]_0 \to \Z_r$, and
\begin{equation}
    \label{eq:lambdaicondition}
    (C; x^+, x^{-}, y^i, y^{i+1}) \cong (C; \infty, 0,1,\zeta)
\end{equation}
for all $i \in \Z_r$.  The first step in proving that $\L^r_n(\zeta)$ is indeed a fine moduli space for stable $(r,n)$-curves of type $\zeta$ is to verify that its points are in bijection with isomorphism classes of such curves.

\begin{lemma}
\label{lem:Lrnobjects}
If $(C; x^{\pm}, \{y^{\ell}\}, \{z_i^j\}) \in \L^r_n(\zeta)$,
then there exists a unique automorphism $\sigma$ of $C$ making $(C; \sigma; x^{\pm}, \{y^{\ell}\}, \{z_i^j\})$ into a stable $(r,n)$-curve of type $\zeta$, and the elements of $\L^r_n(\zeta)$ are precisely the elements of $\M^r_n$ for which such an automorphism exists.
\end{lemma}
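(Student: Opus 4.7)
The plan is to construct the automorphism $\sigma$ in two stages—first on the central component $C_\bullet$ using condition (\ref{eq:lambdaicondition}), then on the spokes using condition (\ref{eq:pialphacondition})—and to derive uniqueness from the rigidity of marked projective lines. For uniqueness, any candidate $\sigma$ must fix $x^\pm$, cyclically permute the $y^\ell$-spokes (since $y^\ell$ sits on the $\ell$-th leaf by stability), and restrict to an isomorphism of marked $\P^1$'s on each irreducible component. Its restriction to $C_\bullet$ is then a M\"obius transformation fixing $x^+$, $x^-$ and sending $p^0 \mapsto p^1$ (where $p^\ell$ denotes the node between $C_\bullet$ and the $y^\ell$-spoke), so it is uniquely determined. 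On each spoke, the action is similarly pinned down by the half-node at $C_\bullet$, the marked point $y^\ell$ on the outermost leaf, and the labeling of any $z_i^j$-markings.

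For existence on the central component, I would exploit condition (\ref{eq:lambdaicondition}). The stabilization of $(C; x^+, x^-, y^i, y^{i+1})$ under $\varphi_i$ contracts each spoke to its attaching node on $C_\bullet$, producing a $\P^1$ with marked points $x^\pm, p^i, p^{i+1}$. Condition (\ref{eq:lambdaicondition}) for all $i \in \Z_r$ then forces $p^\ell = \zeta^\ell$ in the chart where $x^+ = \infty$, $x^- = 0$, and $p^0 = 1$. Multiplication by $\zeta$ on $C_\bullet \cong \P^1$ thus provides an order-$r$ automorphism $\sigma_\bullet$ that fixes $x^\pm$ and sends $p^\ell \mapsto p^{\ell+1}$. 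To extend this to the spokes, I would apply condition (\ref{eq:pialphacondition}) for varying $\alpha: [n]_0 \to \Z_r$: the $\pi_\alpha$-stabilization retains the $y^{\alpha(0)}$-spoke together with any $z_i^{\alpha(i)}$-markings carried on it, so the supplied isomorphism between the $\pi_\alpha$- and $\pi_{\alpha+1}$-stabilizations descends to an isomorphism between the $y^{\alpha(0)}$-spoke and the $y^{\alpha(0)+1}$-spoke, compatible with $\sigma_\bullet$ at their common node with $C_\bullet$. Letting $\alpha$ range over all functions $[n]_0 \to \Z_r$ recovers the prescribed action on every $z_i^j$, and gluing these spokewise isomorphisms to $\sigma_\bullet$ yields a single automorphism $\sigma: C \to C$, which has order $r$ because its restriction to $C_\bullet$ does and each spoke-to-spoke map is determined by marked data of order dividing $r$.

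Conversely, if $(C; \sigma; x^\pm, \{y^\ell\}, \{z_i^j\})$ is a stable $(r,n)$-curve of type $\zeta$, then $\sigma$ directly induces the required isomorphism in (\ref{eq:pialphacondition}) after forgetting the appropriate marked points and stabilizing, while condition (\ref{eq:lambdaicondition}) is precisely the definition of type $\zeta$; hence $(C;\ldots) \in \L^r_n(\zeta)$. The main obstacle will be the spoke-extension step: one must carefully describe which components and markings each $\pi_\alpha$ retains—especially when multiple $z_i^j$ coincide or lie on interior spoke components rather than on leaves—verify that the isomorphisms extracted from different $\alpha$'s agree on their overlaps, and confirm that together they cover every point of $C$. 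The conditions in the defining fiber diagram are designed exactly to make these pieces cohere, but spelling out the gluing requires a careful combinatorial case analysis based on the position of each $z_i^j$ relative to $C_\bullet$ and the spokes.
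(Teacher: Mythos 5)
Your construction of $\sigma$ on the central component and your spoke-by-spoke extension via $\pi_\alpha$ versus $\pi_{\alpha+1}$ match the second half of the paper's argument, and your converse direction is fine. However, there is a genuine gap at the start: you presuppose that an element of $\L^r_n(\zeta)$ already has the topology of an $r$-pinwheel curve --- that $x^+$ and $x^-$ lie on a common central component $C_\bullet$, that the $y^\ell$ sit on the leaves of $r$ \emph{distinct} chains attached to $C_\bullet$ at distinct points $p^\ell$, and that for each $i$ the orbit $z_i^0,\dots,z_i^{r-1}$ is distributed one point per spoke (or entirely on $C_\bullet$). None of this is given: an element of $\L^r_n(\zeta)$ is a priori just an arbitrary $\w$-stable curve in $\M^r_n$ satisfying the isomorphism conditions \eqref{eq:pialphacondition} and \eqref{eq:lambdaicondition}, and the substance of the lemma is precisely that these conditions force the pinwheel structure. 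Your appeal to ``stability'' to place $y^\ell$ on the $\ell$-th leaf does not do this work; $\w$-stability alone permits, say, several $y^\ell$ on a single tree hanging off the component containing $x^+$, or $x^-$ on a different component, or branching trees rather than chains.

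The paper's proof devotes its first half to ruling these configurations out: it uses the specific weights \eqref{eq:w} to show that every leaf must carry either some $y^\ell$ or both of $x^\pm$, applies \eqref{eq:lambdaicondition} iteratively to show the $y^\ell$-trees and the $x^-$-tree attach at $r+1$ distinct points of $C_\bullet$ (forcing the $x^-$-tree to be empty and each $y^\ell$-tree to be a chain), and applies \eqref{eq:pialphacondition} iteratively to show that $z_i^0$ on the $y^{\alpha(0)}$-spoke forces $z_i^j$ onto the $y^{j+\alpha(0)}$-spoke. This last point also repairs the issue you flag at the end of your proposal: only after the light orbits are known to be distributed symmetrically can one choose a \emph{compatible} $\alpha$ for which $\pi_\alpha$ genuinely contracts all spokes except the $y^{\alpha(0)}$-one, which is what makes the comparison of $\pi_\alpha$- and $\pi_{\alpha+1}$-stabilizations yield an isomorphism of whole spokes. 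You would need to add these structural arguments before your construction of $\sigma_\bullet$ and the spoke isomorphisms can begin.
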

\begin{proof}
Fix an element $(C; x^{\pm}, \{y^{\ell}\}, \{z_i^j\}) \in \L^r_n(\zeta)$, and let $C_{\bullet}$ be the component of $C$ containing $x^+$.  Then each $y^\ell$ lies on some (possibly empty) tree of projective lines attached to $C_{\bullet}$ at a point $p^\ell$, and $x^-$ lies on some (possibly empty) tree of projective lines attached to $C_{\bullet}$ at a point $p^-$.

A priori, some of these trees could coincide with one another.  However, they cannot all be identical---that is, $x^-, y^0, \ldots, y^{r-1}$ cannot all lie on a single tree emanating from $C_{\bullet}$---since this would force the only special points on $C_{\bullet}$ to be a single node, the marked point $x^+$, and possibly some of the $z_i^j$.  With the weights \eqref{eq:w}, such a curve cannot be stable.

Thus, there must be at least one $y^\ell$ on a different tree than $x^-$.  In this case, we can choose an automorphism $s$ of $C_{\bullet}$ such that
\[s(x^+) = \infty, \;\; s(p^-) = 0, \;\; s(p^\ell) = 1,\]
and the condition \eqref{eq:lambdaicondition} ensures that
\[s(p^{\ell+1}) = \zeta.\]
(In particular, note that this means that the $y^{\ell+1}$-tree is distinct from both the $y^\ell$-tree and the $x^-$-tree.)  Applying \eqref{eq:lambdaicondition} again with $\ell$ replaced by $\ell+1$ shows that
\[s(p^{\ell+2}) = \zeta^2.\]
Continuing in this way proves that $x^-$ and the $y^\ell$'s lie on $r+1$ distinct trees emanating from $C_{\bullet}$, attached (under the automorphism $s$) at $0$ and roots of unity.

Each such tree must end in one or more leaves.  However, the weights \eqref{eq:w} ensure that a leaf must either contain at least one $y^\ell$ or both of $x^+$ and $x^-$.  Therefore, the $x^-$-tree must be empty (that is, $x^- \in C_{\bullet}$) and each of the $y^\ell$-trees must be a chain of projective lines ending in a single leaf with $y^\ell$.  What remains to be shown, then, is that each of these $r$ chains has the same length---which, in particular, implies that $C$ admits an automorphism $\sigma$ taking $y^\ell$ to $y^{\ell+1}$ for each $\ell$---and that this automorphism takes $z_i^j$ to $z_i^{j+1}$ for each $i \in [n]$ and $j \in \Z_r$.

To see this, first notice that a repeated application of \eqref{eq:pialphacondition} shows
\[(C; x^{\pm}, y^{\alpha(0)}, z_1^{\alpha(1)}, \ldots, z_n^{\alpha(n)}) \cong (C; x^{\pm}, y^{\alpha(0)+i}, z_1^{\alpha(1)+i}, \ldots, z_n^{\alpha(n)+i})\]
for all $i \in \Z_r$.  This implies that if $z_i^0$ lies on the $y^{\alpha(0)}$-spoke of $C$, then $z_i^j$ lies on the $y^{j + \alpha(0)}$-spoke.  In particular, for each fixed $i \in [n]$, no two of the marked points $z_i^0, \ldots, z_i^{r-1}$ can lie on the same spoke, and furthermore, if one of these lies on the central component, then they all do. 

In light of this, we can define a function $\alpha: [n]_0 \rightarrow \Z_r$ as follows.  First, set $\alpha(0) = 0$.  Then, for each $i \in [n]$ such that the marked points $z_i^0, \ldots, z_i^{r-1}$ do not lie on the central component, set $\alpha(i) \in \Z_r$ so that $z_i^{\alpha(i)}$ lies on the $y^0$-spoke of $C$.  Finally, for each $i \in [n]$ such that the marked points $z_i^0, \ldots, z_i^{r-1}$ do lie on the central component, set $\alpha(i) \in \Z_r$ to be any value.  For example, for the curve in Figure~\ref{fig:elementofLrn}, we have
\begin{align*}
    \alpha(0) =0, \;\; \alpha(1) = \text{anything in } \Z_3, \;\; \alpha(2) = 1, \;\; \alpha(3) = 0, \;\;
    \alpha(4) =2.
\end{align*}
For this $\alpha$, the morphism $\pi_{\alpha}$ does no contraction on the $y^0$-spoke of $C$ but it contracts all of the other spokes; see Figure \ref{fig:pi-alpha-2}. 
    
    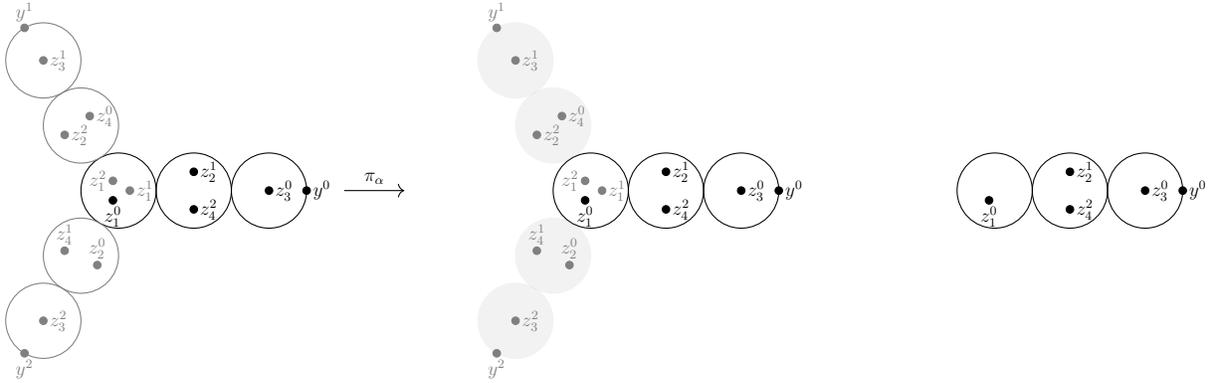
\begin{figure}[h!]
    \begin{center}
    \begin{tikzpicture}[scale=1,  every node/.style={scale=0.6}]
    
    \filldraw[gray] (0.15,0) circle (0.05cm) node[right]{$z^1_1$};
    \filldraw[gray] (-0.075,0.1299) circle (0.05cm) node[left]{$z^2_1$};
    \filldraw (-0.075,-0.1299) circle (0.05cm) node[below]{$z^0_1$};
    
    \filldraw(1,0.25) circle (0.05cm) node[right]{$z^1_2$};
    \filldraw[gray](-0.7165, 0.741) circle (0.05cm) node[right]{$z^2_2$};
    \filldraw[gray](-0.283, -0.991) circle (0.05cm) node[above]{$z^0_2$};
    
    \filldraw(1,-0.25) circle (0.05cm) node[right]{$z^2_4$};
    \filldraw[gray](-0.383,0.991) circle (0.05cm) node[right]{$z^0_4$};
    \filldraw[gray](-0.717,-0.8) circle (0.05cm) node[above]{$z^1_4$};
    
    \filldraw (2,0) circle (0.05cm) node[right]{$z^0_3$};
    \filldraw[gray] (-1,1.732) circle (0.05cm) node[right]{$z^1_3$};
    \filldraw[gray] (-1,-1.732) circle (0.05cm) node[right]{$z^2_3$};
    
    \draw (0,0) circle (0.5cm);
    \draw (1,0) circle (0.5cm);
    \draw (2,0) circle (0.5cm);
    \filldraw (2.5,0) circle (0.05cm) node[right]{$y^0$};
    
    \draw[->] (3, 0) -- node[above] {$\pi_{\alpha}$} (3.8,0);

    \draw[gray] (-0.5,0.866) circle (0.5cm);
    \draw[gray] (-1,1.732) circle (0.5cm);
    \filldraw[gray] (-1.25,2.165) circle (0.05cm) node[above]{$y^1$};

    \draw[gray] (-0.5,-0.866) circle (0.5cm);
    \draw[gray] (-1,-1.732) circle (0.5cm);
    \filldraw[gray] (-1.25,-2.165) circle (0.05cm) node[below]{$y^2$};

    \end{tikzpicture}
    \qquad
    \begin{tikzpicture}[scale=1,  every node/.style={scale=0.6}]
    
    \filldraw[gray] (0.15,0) circle (0.05cm) node[right]{$z^1_1$};
    \filldraw[gray] (-0.075,0.1299) circle (0.05cm) node[left]{$z^2_1$};
    \filldraw (-0.075,-0.1299) circle (0.05cm) node[below]{$z^0_1$};
    
    \filldraw(1,0.25) circle (0.05cm) node[right]{$z^1_2$};
    \filldraw[gray](-0.7165, 0.741) circle (0.05cm) node[right]{$z^2_2$};
    \filldraw[gray](-0.283, -0.991) circle (0.05cm) node[above]{$z^0_2$};
    
    \filldraw(1,-0.25) circle (0.05cm) node[right]{$z^2_4$};
    \filldraw[gray](-0.383,0.991) circle (0.05cm) node[right]{$z^0_4$};
    \filldraw[gray](-0.717,-0.8) circle (0.05cm) node[above]{$z^1_4$};
    
    \filldraw (2,0) circle (0.05cm) node[right]{$z^0_3$};
    \filldraw[gray] (-1,1.732) circle (0.05cm) node[right]{$z^1_3$};
    \filldraw[gray] (-1,-1.732) circle (0.05cm) node[right]{$z^2_3$};
    
    \draw (0,0) circle (0.5cm);
    \draw (1,0) circle (0.5cm);
    \draw (2,0) circle (0.5cm);
    \filldraw (2.5,0) circle (0.05cm) node[right]{$y^0$};

    \filldraw[gray, opacity=0.1] (-0.5,0.866) circle (0.5cm);
    \filldraw[gray, opacity=0.1] (-1,1.732) circle (0.5cm);
    \filldraw[gray] (-1.25,2.165) circle (0.05cm) node[above]{$y^1$};

    \filldraw[gray, opacity=0.1] (-0.5,-0.866) circle (0.5cm);
    \filldraw[gray, opacity=0.1] (-1,-1.732) circle (0.5cm);
    \filldraw[gray] (-1.25,-2.165) circle (0.05cm) node[below]{$y^2$};

    \end{tikzpicture}
    \qquad
    \begin{tikzpicture}[scale=1,  every node/.style={scale=0.6}]
    
    \filldraw (-0.075,-0.1299) circle (0.05cm) node[below]{$z^0_1$};
    
    \filldraw(1,0.25) circle (0.05cm) node[right]{$z^1_2$};
    
    \filldraw(1,-0.25) circle (0.05cm) node[right]{$z^2_4$};
    
    \filldraw (2,0) circle (0.05cm) node[right]{$z^0_3$};
    
    \draw (0,0) circle (0.5cm);
    \draw (1,0) circle (0.5cm);
    \draw (2,0) circle (0.5cm);
    \filldraw (2.5,0) circle (0.05cm) node[right]{$y^0$};

    \filldraw[gray, opacity=0] (-0.5,0.866) circle (0.5cm);
    \filldraw[gray, opacity=0] (-1,1.732) circle (0.5cm);
    \filldraw[gray, opacity=0] (-1.25,2.165) circle (0.05cm) node[above]{$y^1$};

    \filldraw[gray, opacity=0] (-0.5,-0.866) circle (0.5cm);
    \filldraw[gray, opacity=0] (-1,-1.732) circle (0.5cm);
    \filldraw[gray, opacity=0] (-1.25,-2.165) circle (0.05cm) node[below]{$y^2$};
    \end{tikzpicture}
    \end{center}
    \caption{The image of the curve in Figure \ref{fig:elementofLrn} under $\pi_{\alpha}$, where $\alpha(0) = 0$, $\alpha(1) \in \Z_3$, $\alpha(2) = 1$, $\alpha(3) = 0$ and $\alpha(4) = 2$.}
    \label{fig:pi-alpha-2}
    \end{figure}
    
By the same token, the morphism $\pi_{\alpha+1}$ contracts all spokes except for the $y^1$-spoke, which remains intact.  The condition \eqref{eq:pialphacondition} thus implies that the $y^0$- and $y^1$-spokes are isomorphic pointed curves, meaning they are chains of the same length and the isomorphism between them takes any $z_i^j$ in the $y^0$-spoke to $z_i^{j+1}$.  Repeating this reasoning for each successive spoke shows that $C$ has the appropriate radial symmetry and therefore there exists an automorphism $\sigma$ that makes $C$ into an $(r,n)$-curve of type $\zeta$.  Since the behavior of this automorphism is specified on at least three points of the central component, it is unique.

Conversely, it is clear that \eqref{eq:pialphacondition} and \eqref{eq:lambdaicondition} hold for any $(r,n)$-curve of type $\zeta$, so such curves are precisely the elements of $\L^r_n(\zeta)$.
\end{proof}

The recipe in the proof of Lemma~\ref{lem:Lrnobjects} for producing a function $\alpha: [n]_0 \rightarrow \Z_r$ from a stable $(r,n)$-curve will be useful in what follows, so before proceeding, we take a moment to record it in the following definition.

\begin{definition}
\label{def:compatible}
Let $(C;\sigma; x^{\pm}, \{y^\ell\}, \{z_i^j\})$ be a stable $(r,n)$-curve, which we abbreviate by $C$ for conciseness.   We say that a function $\alpha: [n]_0 \rightarrow \Z_r$ is {\bf compatible} with $C$ if for each $i \in [n]$ such that the $i$th light orbit $z_i^0, \ldots, z_i^{r-1}$ does not lie on the central component, the marked point $z_i^{\alpha(i)}$ lies on the $y^{\alpha(0)}$-spoke of $C$. 
\end{definition}

\begin{remark}
\label{rmk:compatibility}
The notion of compatibility satisfies the following properties:
\begin{enumerate}
    \item If $(C; x^{\pm}, \{y^\ell\}, \{z_i^j\})$ is compatible with $\alpha$, then $(C; x^{\pm}, \{y^\ell\}, \{z_i^j\})$ is compatible with $\alpha+\k$ for all $\k \in \Z_r$.
    \item If $(C; x^{\pm}, \{y^\ell\}, \{z_i^j\})$ is compatible with $\alpha$, then so are all other $(r,n)$-curves with the same dual graph.
    
    \item Let $G$ and $H$ be dual graphs of $(r,n)$-curves such that $H$ is obtained via edge-contraction from $G$.  If
    $\alpha$ is compatible with
    $(r,n)$-curves with dual graph
    $G$, then it is also compatible
    with $(r,n)$-curves with dual
    graph $H$. In particular, a smooth $(r,n)$-curve is compatible with every $\alpha:[n]_0\to\Z_r$.
\end{enumerate}
\end{remark}
Remark~\ref{rmk:compatibility} implies that, for fixed $\alpha:[n]_0 \rightarrow \Z_r$, the locus of curves not compatible with $\alpha$ is a union of boundary strata.  Conversely, the locus of curves that are compatible with $\alpha$ forms an open set, which we denote by
\begin{equation}
    \label{eq:Ualpha}
U_{\alpha} \subseteq \L^n_r(\zeta)
\end{equation}
in what follows.

\subsection{Proof of fine moduli space}

At this point, we have shown via Lemma~\ref{lem:Lrnobjects} that the points of $\L^r_n(\zeta)$ are in bijection with stable $(r,n)$-curves of type $\zeta$.  In order to know that $\L^r_n(\zeta)$ is a fine moduli space for these objects, though, we must also construct a universal family.  This can nearly be bootstrapped from $\M^r_n$: if 
\[\iota: \L^r_n(\zeta) \rightarrow \M^r_n\]
denotes the inclusion, then we can define a universal curve 
$\mathcal{C}^r_n(\zeta)$ over $\L^r_n(\zeta)$ by
\begin{equation}
    \label{eq:univcurve}
    \mathcal{C}^r_n(\zeta) := \iota^*\mathcal{C}^r_n,
\end{equation}
where $\mathcal{C}^r_n$ is the universal curve over $\M^r_n$.  Furthermore, we can define sections $x^{\pm}, y^\ell$, and $z^i_j$ of $\mathcal{C}^r_n(\zeta)$ by pullback of the corresponding sections of $\mathcal{C}^r_n$. What remains, however, is to construct a universal automorphism $\sigma$ of $\mathcal{C}^r_n(\zeta)$.  This is the key content of the following theorem.

\begin{thm}
Let $r \geq 2$ and $n \geq 0$ be integers, and let $\zeta$ be a primitive $r$th root of unity.  Then $\L^r_n(\zeta)$ is a fine moduli space for stable $(r,n)$-curves of type $\zeta$.
\end{thm}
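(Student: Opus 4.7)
The plan is to construct the universal automorphism $\sigma$ on $\calC^r_n(\zeta) = \iota^*\calC^r_n$ and verify the universal property for arbitrary families of stable $(r,n)$-curves of type $\zeta$. The main idea is to exploit a tautological relabeling automorphism of $\M^r_n$. Specifically, let $\tau : \M^r_n \to \M^r_n$ be the morphism, provided by the universal property of $\M^r_n$, that classifies the family $\calC^r_n$ equipped with its sections cyclically relabeled: the new $y^\ell$-section is the old $y^{\ell+1 \!\!\mod r}$-section and the new $z_i^j$-section is the old $z_i^{j+1 \!\!\mod r}$-section, while $x^{\pm}$ are unchanged. Then $\tau^r = \Id$, and the universal property also yields a tautological isomorphism $\tilde\tau : \tau^*\calC^r_n \xrightarrow{\sim} \calC^r_n$ of families over $\M^r_n$. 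A direct check from the definitions of $\pi_\alpha$ and $\lambda_i$ given in Section~\ref{subsec:hassett-maps} yields $\pi_\alpha \circ \tau = \pi_{\alpha+1}$ and $\lambda_i \circ \tau = \lambda_{i+1}$ for all $\alpha : [n]_0 \to \Z_r$ and $i \in \Z_r$.

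Combined with the fiber-product equations $\pi_\alpha \circ \iota = \pi_{\alpha+1} \circ \iota$ and $\lambda_i \circ \iota = \zeta$ defining $\L^r_n(\zeta)$, these relations show that $\tau \circ \iota$ satisfies the same fiber-product equations as $\iota$, and hence factors uniquely through $\iota$ to give a morphism $\tau' : \L^r_n(\zeta) \to \L^r_n(\zeta)$ satisfying $\iota \circ \tau' = \tau \circ \iota$. The key step is to prove $\tau' = \Id$. On closed points this follows immediately from Lemma~\ref{lem:Lrnobjects}: the automorphism $\sigma$ of any stable $(r,n)$-curve realizes an isomorphism between the curve and its relabeled version, so they represent the same point of $\M^r_n$. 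I anticipate that promoting this pointwise identity to a scheme-theoretic one will be the main obstacle; my approach is to verify that $\L^r_n(\zeta)$ is reduced (for instance, by checking transversality of the fiber-product conditions inside the smooth Hassett space $\M^r_n$), and then to invoke the fact that a finite-order endomorphism of a reduced scheme of finite type over $\C$ fixing every closed point must be the identity.

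Once $\tau' = \Id$ is established, base changing $\tilde\tau$ along $\iota$ produces an automorphism $\sigma : \calC^r_n(\zeta) \to \calC^r_n(\zeta)$ that sends $y^\ell \mapsto y^{\ell+1}$, $z_i^j \mapsto z_i^{j+1}$, and fixes $x^{\pm}$; and $\sigma^r = \Id$ follows from $\tau^r = \Id$. This makes $(\calC^r_n(\zeta); \sigma; x^{\pm}, \{y^\ell\}, \{z_i^j\})$ into a family of stable $(r,n)$-curves of type $\zeta$ over $\L^r_n(\zeta)$, the candidate universal family. For the universal property, any family $(\calC \to B; \sigma_B; x^{\pm}_B, \{y^{\ell,B}\}, \{z_i^{j,B}\})$ of stable $(r,n)$-curves of type $\zeta$ yields a classifying morphism $f : B \to \M^r_n$ for its underlying $\w$-stable data; the presence of $\sigma_B$ forces $\pi_\alpha \circ f = \pi_{\alpha+1} \circ f$, while the type-$\zeta$ hypothesis gives $\lambda_i \circ f = \zeta$, so $f$ factors uniquely through $\iota$, and the pullback of the constructed $\sigma$ agrees with $\sigma_B$ by the fiberwise uniqueness established in Lemma~\ref{lem:Lrnobjects}.
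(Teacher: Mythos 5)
Your proposal is correct in outline but takes a genuinely different route from the paper at the one step that actually requires work: the construction of the universal automorphism $\sigma$ on $\calC^r_n(\zeta)=\iota^*\calC^r_n$. The paper builds $\sigma$ by hand and locally: for each $\alpha$ it restricts to the open set $U_\alpha$ of curves compatible with $\alpha$, shows that the images $\psi_{\alpha+b}(V_{\alpha,b})$ of the loci where the contraction maps are isomorphisms are independent of $b$, defines $\sigma$ on $V_{\alpha,b}$ as $(\psi_{\alpha+b+1})^{-1}\circ\psi_{\alpha+b}$, and glues. Your construction via the weight-preserving relabeling automorphism $\tau$ of $\M^r_n$ is global and more functorial; the identities $\pi_\alpha\circ\tau=\pi_{\alpha+1}$ and $\lambda_i\circ\tau=\lambda_{i+1}$ are correct, $\tau\circ\iota$ does factor through $\iota$, and once $\tau'=\Id$ is known, pulling back $\tilde\tau$ yields $\sigma$, with $\sigma^r=\Id$ coming for free because the composite of the tautological isomorphisms over $\tau^r=\Id$ is an automorphism of $\calC^r_n$ fixing every section, hence trivial. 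The final verification of the universal property is essentially the same in both arguments.

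The cost of your route is exactly the step you flag: upgrading $\tau'=\Id$ from closed points to a scheme-theoretic identity requires knowing that $\L^r_n(\zeta)$ is reduced (separatedness is clear, as it is closed in the projective scheme $\M^r_n$). This is a genuine deferred obligation rather than a formality: the fiber product defining $\L^r_n(\zeta)$ could a priori be non-reduced. The statement is true---the paper records smoothness in Observation~\ref{obs:snc}, via the claim that $\pi_\alpha|_{U_\alpha}$ maps $U_\alpha$ isomorphically onto an open subset of the smooth space $\M^1_n$---but that observation is itself only sketched and is presented as optional, whereas in your argument it becomes load-bearing for the very existence of $\sigma$; you would need to prove it (or your transversality claim) in detail. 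The paper's local construction avoids this dependence, though its own gluing step is likewise only checked on closed points of fibers. One smaller point: ``the pullback of the constructed $\sigma$ agrees with $\sigma_B$ by fiberwise uniqueness'' is not sufficient over a non-reduced base $B$; argue instead that $\sigma_B^{-1}\circ f^*\sigma$ is an automorphism of a family of $\w$-stable genus-zero curves fixing all sections, hence the identity because such objects are automorphism-free.
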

\begin{proof}
Let $\mathcal{C}^r_n(\zeta)$ and its sections $x^{\pm}, y^\ell$, and $z^i_j$ be defined as in \eqref{eq:univcurve} and the subsequent paragraph.  To prove the theorem, it suffices to construct an automorphism $\sigma$ of $\mathcal{C}^r_n(\zeta)$ such that
\begin{equation}
    \label{eq:sigmaproperties}
\sigma \circ x^+ = x^+, \;\; \sigma \circ x^- = x^-, \;\; \sigma \circ y^\ell = y^{\ell+1}, \;\; \text{ and } \sigma \circ z_i^j = z_i^{j+1}
\end{equation}
for all $i \in [n]$ and all $\ell,j \in \Z_r$.  Indeed, if such an automorphism $\sigma$ exists, then it makes $\mathcal{C}^r_n(\zeta)$ into a family of stable $(r,n)$-curves, and this family is of type $\zeta$ by Lemma~\ref{lem:Lrnobjects}. It follows that, for any base scheme $B$, one can restrict the bijection 
\begin{equation}
    \label{eq:modulimap2}
\{\text{morphisms } B\rightarrow \M^r_n\} \leftrightarrow \{\text{families of }\w\text{-stable curves over }B\}/\cong,
\end{equation}
which exists by virtue of $\M^r_n$ being a fine moduli space for $\w$-stable curves, to yield
\begin{align}
\label{eq:modulimap}
\{\text{morphisms } B \rightarrow \L^r_n(\zeta)\} &\rightarrow \{\text{families of stable } (r,n)\text{-curves of type } \zeta \text{ over } B\}/\cong\\
\nonumber f &\mapsto f^*\Big(\mathcal{C}^r_n(\zeta)\Big).
\end{align}
Because it is the restriction of a bijection, the map in \eqref{eq:modulimap} is certainly injective.  It is also surjective, because a family of stable, type-$\zeta$ $(r,n)$-curves over $B$ yields a family of $\w$-stable curves by forgetting $\sigma$, and the latter is the pullback of the universal family on $\M^r_n$ under some $f: B \rightarrow \M^r_n$ by the surjectivity of \eqref{eq:modulimap2}.  The fact that each fiber of the family is in fact a stable $(r,n)$-curve of type $\zeta$ implies, by Lemma~\ref{lem:Lrnobjects}, that $f$ lands in $\L^r_n(\zeta)$.  Thus, the map in \eqref{eq:modulimap} is a bijection, which says precisely that $\L^r_n(\zeta)$ is the requisite fine moduli space.

To construct the automorphism $\sigma$ of $\mathcal{C}^r_n(\zeta)$, we patch together automorphisms defined locally on open subsets of $\mathcal{C}^r_n(\zeta)$.  Toward defining these open sets, let $\alpha: [n]_0 \rightarrow \Z_r$ be any function, and let $U_{\alpha} \subseteq \L^r_n(\zeta)$ be as in \eqref{eq:Ualpha}.  For any $\k \in \Z_r$, consider the morphism
\[\pi_{\alpha+\k}\big|_{U_{\alpha}}: U_{\alpha} \rightarrow \M^1_n.\]
Geometrically, this morphism leaves the $y^{\alpha(0)+\k}$-spoke unchanged but contracts all other spokes.  In particular, since each individual spoke is isomorphic to each other spoke, we see that
\[\pi_{\alpha+\k}\big|_{U_{\alpha}}=\pi_{\alpha+\k'}\big|_{U_{\alpha}}\]
for any $\k,\k' \in \Z_r$.

Now, let $\mathcal{C}_{\alpha} \subseteq \mathcal{C}^r_n(\zeta)$ be the restriction of $\mathcal{C}^r_n(\zeta)$ to $U_{\alpha}$.  Each of the morphisms $\pi_{\alpha+\k}\big|_{U_{\alpha}}$ can be lifted to a birational morphism
\[\psi_{\alpha+\k}: \mathcal{C}_{\alpha} \rightarrow \pi_{\alpha+\k}^*(\mathcal{C}^1_n)\]
as in \eqref{eq:def_psialpha}, which performs the above contraction of all but the $y^{\alpha(0)+\k}$ spoke on each fiber of $\calC_\alpha$.  Note that since the particular spokes being contracted depend on $\k$, we have
\[\psi_{\alpha+\k} \neq \psi_{\alpha+\k'}\]
if $\k \neq \k'$.  

For any $\alpha:[n]_0\to\Z_r$ and any $\k \in \Z_r$, denote by 
\[V_{\alpha,\k} \subseteq \mathcal{C}_{\alpha}\]
the largest open subset of $\mathcal{C}_{\alpha}$ on which the birational morphism $\psi_{\alpha+\k}$ is an isomorphism.  Geometrically, $V_{\alpha,\k}$ is obtained from $\mathcal{C}_{\alpha}$ by removing all spokes in each fiber except for the $y^{\alpha(0)+\k}$-spoke, including removing the points in the central component of the fiber at which these spokes are attached.  In particular, from this geometric description we see that $\{V_{\alpha,\k}\}_{\k \in \Z_r}$ covers $\mathcal{C}_{\alpha}$, and therefore varying over all $\alpha$, we obtain an open cover of $\mathcal{C}^r_n(\zeta)$.

Our goal, now, is to construct an isomorphism
 \[\sigma_{\alpha, \k} \colon V_{\alpha,\k} \to V_{\alpha,\k+1}\]
relative to $U_{\alpha}$, which will serve as the local definition of the universal automorphism $\sigma$.  The key observation is that
\begin{equation}
\label{eq:psikalpha} \psi_{\alpha+\k}(V_{\alpha,\k}) = \psi_{\alpha+\k+1  }(V_{\alpha,\k+1})
\end{equation}
for every $\k \in \Z_r$, from which it follows that $\sigma_{\alpha,\k}$ can be defined as the composition of the two isomorphisms
\begin{equation}
    \label{eq:sigmaSk}
V_{\alpha,\k} \xrightarrow{\psi_{\alpha+\k}|_{V_{\alpha,\k}}} \psi_{\alpha+\k}(V_{\alpha,\k}) = \psi_{\alpha+\k+1}(V_{\alpha,\k+1}) \xrightarrow{(\psi_{\alpha+\k+1}|_{V_{\alpha,\k+1}})^{-1}} V_{\alpha,\k+1}.
\end{equation}

To prove \eqref{eq:psikalpha}, first observe that, by the definition of the maps $\psi_{\alpha+\k}$ and $\pi_\alpha$, we have
    \begin{equation}
        \label{eq:imageVSk}
        \psi_{\alpha+\k}(V_{\alpha,\k}) = \psi_{\alpha + \k} (\mathcal{C}_{\alpha}^{\text{sm}}) \cup \left(\pi_\alpha^* (\mathcal{C}^1_n) \setminus \bigcup_{\ell \neq 0}\text{im}\left(\psi_{\alpha+\k} \circ y^{\alpha(0) + \k + \ell}\right)\right),
    \end{equation}
where $\mathcal{C}_{\alpha}^{\text{sm}} \subseteq \mathcal{C}_{\alpha}$ is the open set whose fibers are smooth curves.  For any $\ell \neq 0$, let us describe the morphism
\[\psi_{\alpha+\k} \circ y^{\alpha(0) + \k + \ell}: U_\alpha \rightarrow \pi_{\alpha}^*(\mathcal{C}^1_n)\]
in geometric terms, by describing its behavior on closed points. Let $C$ be a closed point of $U_\alpha$, which corresponds to a marked curve $(C; x^{\pm}, \{y^{\ell}\}, \{z^j_i\})$.  The image of $C$ under $\pi_\alpha$ is a point that corresponds to a marked curve $C'$, and the image of $C$ under $\psi_{\alpha+\k} \circ y^{\alpha(0) + \k + \ell}$ is a closed point of $C'$ that we will denote $q^{\k,\ell}$. By definition, $C'$ is a chain of projective lines with $x^{\pm}$ together on an end component that we denote by $C_{\bullet}$.  In this notation, we have $q^{\k,\ell} \in C_{\bullet}$, and if we denote the node of $C_{\bullet}$ by $p$, then \eqref{eq:lambdaicondition} implies
\[(C_{\bullet}; x^+, x^-, p, q^{\k,\ell}) \cong (\P^1; \infty, 0, 1, \zeta^{\ell}).\]
Since this result does not depend on $\k$, we see that the morphism $\psi_{\alpha+\k} \circ y^{\alpha(0)+\k+\ell}$ is independent of $\k$ for any $\ell \neq 0$.  Given that $\psi_{\alpha+\k}(\mathcal{C}^{\text{sm}}_\alpha)$ is also manifestly independent of $\k$, it follows from \eqref{eq:imageVSk} that $\psi_{\alpha+\k}(V_{\alpha,\k})$ is independent of $\k$, which proves \eqref{eq:psikalpha}. 

Lastly, we glue the local morphisms $\sigma_{\alpha,\k}$ defined by \eqref{eq:sigmaSk} to give a global morphism \[\sigma:\mathcal{C}_n^r(\zeta)\to\mathcal{C}_n^r(\zeta)\]
over the base $\L_n^r(\zeta)$.  To see that the morphisms $\sigma_{\alpha,\k}$ indeed agree on the overlaps in their domain, we restrict to fibers of $\mathcal{C}_n^r(\zeta)$ and describe the maps $\sigma_{\alpha,\k}$ on closed points.   Given any $(C; x^{\pm}, \{y^{\ell}\}, \{z^j_i\})\in\overline{\mathcal{L}}_{n}^{r}(\zeta)$, let us also denote by $C\subseteq \calC_n^r(\zeta)$ the corresponding fiber of the universal curve.  By Lemma \ref{lem:Lrnobjects}, $C$ is an $(r,n)$-curve; we denote its central component by $C_{\bullet}$ as usual, and we denote by $C(\k)$ the sub-curve obtained by taking the union of $C_{\bullet}$ and the spoke containing $y^{\k}$. Suppose $\alpha:[n]_0\to\Z_r$ is compatible with $C$.  Setting 
\begin{align}
    C^{\circ}(\k):=C(\k)\cap V_{\alpha,\k}=C(\k)\setminus\bigcup_{\k'\ne \k} C(\k'),
\end{align}
we observe that $\sigma_{\alpha,\k}|_{C^{\circ}(\k)}$ defines an isomorphism onto $C^{\circ}(\k+1)$ that fixes $x^{\pm}$, takes $y^{\alpha(0)}$ to $y^{\alpha(0)+1}$, and takes $z_i^{\alpha(i)}$ to $z_i^{\alpha(i)+1}$. As in the proof of Lemma \ref{lem:Lrnobjects}, this forces $\sigma_{\alpha,\k}|_{C_{\bullet}\cap C^{\circ}(\k)}$ to be rotation by $\zeta$, once coordinates are chosen on $C_{\bullet}$ such that $x_+=\infty$, $x_-=0$, and the node connecting $C_{\bullet}$ to $y^{\alpha(0)}$ is $1$. This shows that $\sigma_{\alpha,\k}|_{C^{\circ}(\k)}$ agrees with $\tau|_{C^{\circ}(\k)}$, where $\tau$ is the unique automorphism of $C$ making it into an $(r,n)$-curve of type $\zeta$.  In particular, because this description of $\sigma_{\alpha,\k}$ depends only on the fiber of $\mathcal{C}^r_n(\zeta)$ and not on $\alpha$ or $\k$, the local morphisms $\sigma_{\alpha,\k}$ indeed glue to give a global automorphism  $\sigma$ of $\mathcal{C}^r_n(\zeta)$.

Having equipped the universal curve with a universal automorphism $\sigma$, which (by the argument of the previous paragraph) makes it into a family of $(r,n)$-curves of type $\zeta$, the proof of the theorem is complete.
\end{proof}

\subsection{Geometric observations}
\label{subsec:geometry}

We conclude this section with some geometric observations about the moduli space $\L^r_n(\zeta)$.  Since these are not needed for the current work, we give only brief indications of the proofs.

\begin{observation}
\label{obs:snc}
The moduli space $\L^r_n(\zeta)$ is smooth, and its boundary (the union of the positive-codimension boundary strata) is a simple normal crossings divisor.
\end{observation}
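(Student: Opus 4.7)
The plan is to describe $\L^r_n(\zeta)$ locally in terms of the standard étale coordinates on the Hassett space $\M^r_n$, reducing both smoothness and the simple normal crossings property to the corresponding (known) statements for $\M^r_n$. By the results of Hassett~\cite{HASSETT2003316}, the ambient space $\M^r_n$ is smooth of dimension $nr+r-1$ with simple normal crossings boundary; at any closed point $C \in \M^r_n$, there exist étale local coordinates given by a smoothing parameter for each node of $C$, together with moduli parameters for the positions of marked points on each irreducible component.

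The closed subscheme $\L^r_n(\zeta) \hookrightarrow \M^r_n$ is characterized by the existence of a compatible order-$r$ automorphism $\sigma$ of type $\zeta$. Using the universal automorphism on the family $\mathcal{C}^r_n(\zeta)$ constructed in the previous theorem, one sees that $\L^r_n(\zeta)$ is locally defined by a system of linear identifications among these étale coordinates: each $\mu_r$-orbit of nodes of $C$ contributes a single smoothing parameter on $\L^r_n(\zeta)$ rather than $r$ separate ones; the marked points $y^\ell$ are fixed to lie at $\zeta^\ell$ on the central component (after normalization via the type-$\zeta$ condition); and the position of each light orbit $\{z_i^j\}_{j \in \Z_r}$ is determined by a single representative $z_i^0$ via the $\sigma$-action. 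A direct count shows that these linear relations cut the dimension from $nr+r-1$ down to $n$, and since they identify rather than constrain coordinates, they are automatically transverse. Hence $\L^r_n(\zeta)$ is smooth of dimension $n$.

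Under the resulting local description, the boundary divisors of $\L^r_n(\zeta)$ correspond to the vanishing loci of the smoothing parameters of $\mu_r$-orbits of nodes; these are independent local coordinates on $\L^r_n(\zeta)$, so the boundary divisors are smooth and pairwise transverse, yielding the simple normal crossings property. The main obstacle in this plan is verifying rigorously that the equivariance relations remain independent at every closed point, including at the deepest boundary strata where many $\mu_r$-orbits of nodes are simultaneously unsmoothed; this can be dispatched by a direct linear-algebra check using a suitable equivariant étale chart, together with the observation that $\sigma$ freely permutes both the spokes and the nodes of each $r$-pinwheel curve.
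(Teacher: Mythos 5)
Your strategy is genuinely different from the one the paper indicates. You argue \emph{extrinsically}: starting from smoothness and the simple normal crossings property of the ambient Hassett space $\M^r_n$ (of dimension $nr+r-1$), you describe $\L^r_n(\zeta)$ in \'etale coordinates as the locus cut out by the $\mu_r$-equivariance relations and invoke transversality. The paper instead works with an atlas: the open sets $U_{\alpha}$ of \eqref{eq:Ualpha} cover $\L^r_n(\zeta)$, and each restriction $\pi_{\alpha}|_{U_{\alpha}}$ is an isomorphism onto an explicitly described open subset $U$ of the $n$-dimensional Hassett space $\M^1_n$ carrying the boundary stratification to the boundary stratification; both assertions of the observation are then literally inherited from \cite{HASSETT2003316} applied to $\M^1_n$, with no deformation theory and no transversality check. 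The trade-off is that your route, if completed, produces explicit local equations for $\L^r_n(\zeta)$ inside $\M^r_n$ (genuinely extra information), whereas the paper's route is shorter and recycles the maps $\pi_{\alpha}$ and the sets $U_{\alpha}$ already set up for the construction of the universal automorphism.

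Be aware, though, that the step you defer is where essentially all of the content lives. The claim that the defining conditions \eqref{eq:pialphacondition} and \eqref{eq:lambdaicondition} become \emph{linear identifications} of the standard \'etale coordinates is not automatic: $\L^r_n(\zeta)$ is defined as a scheme-theoretic fiber product over a diagonal, so one must actually compute how each $\pi_{\alpha}$ acts on smoothing parameters and marked-point coordinates near a deep boundary point, check that the intersection is reduced, and verify that the resulting relations have full rank $nr+r-1-n$ when $rk$ smoothing parameters and many marked-point positions are involved simultaneously. Saying that the relations ``identify rather than constrain coordinates'' and are therefore ``automatically transverse'' presupposes the conclusion of exactly that computation. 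The most natural way to carry out the linear-algebra check is to compare coordinates through the contraction maps $\pi_{\alpha}$ and $\pi_{\alpha+1}$ --- at which point one has essentially rediscovered the paper's argument, since showing that $\pi_{\alpha}|_{U_{\alpha}}$ is an isomorphism onto an open subset of $\M^1_n$ packages the same verification in a form where Hassett's results apply directly.
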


To prove Observation~\ref{obs:snc}, one can leverage the analogous result for the Hassett space $\M^1_n$.  In particular, one can show that the morphisms
\[\pi_{\alpha}|_{U_{\alpha}}: U_{\alpha} \rightarrow \M^1_n\]
are isomorphisms of $U_{\alpha}$ onto an open set $U \subseteq \M^1_n$.  (Specifically, expressing elements of $\M^1_n$ as $(C; x^{\pm}, y, \{z_i\})$ as in Remark~\ref{lem:M1nobjects}, let $C_{\bullet}$ be the component containing $x^{\pm}$.  Then, after choosing coordinates on $C_{\bullet}$ in which $x^+ = \infty$, $x^- = 0$, and the half-node of $C_{\bullet}$ is $1$, the open set $U \subseteq \M^1_n$ consists of curves for which none of the light points $z_i$ lies at an $r$th root of unity in $C_{\bullet}$.)  The local isomorphisms $\pi_{\alpha}$ take the boundary stratification of $\L^r_n(\zeta)$ to the boundary stratification of $\M^1_n$, so Observation~\ref{obs:snc} follows from the analogous statement for Hassett spaces, which is shown in \cite{HASSETT2003316}.

In addition to $\pi_{\alpha}|_{U_{\alpha}}$, there is another natural map from $\L^r_n(\zeta)$ to $\M^1_n$: rather than remembering a single spoke of an $(r,n)$-curve as $\pi_{\alpha}|_{U_{\alpha}}$ does, one can identify all $r$ spokes with each other.  This leads to the following observation.

\begin{observation}
\label{obs:quotient}
There exists a surjective morphism
\[p: \L^r_n(\zeta) \rightarrow \M^1_n\]
that sends an $(r,n)$-curve $C$ with automorphism $\sigma$ to the quotient $C/\sigma$, and $p$ realizes $\M^1_n$ as the quotient\ $\L^r_n(\zeta)/(\Z_r)^n$.  
\end{observation}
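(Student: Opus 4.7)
The plan is to construct $p$ by taking the fiberwise $\sigma$-quotient of the universal curve, verify surjectivity by explicitly constructing preimages, and identify the $(\Z_r)^n$-action on $\L^r_n(\zeta)$ under which $p$ realizes the quotient.

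To construct $p$ functorially, given a family $(\pi: \mathcal{C} \to B; \sigma; x^\pm, \{y^\ell\}, \{z_i^j\})$, I would form the quotient scheme $\bar{\mathcal{C}} := \mathcal{C}/\langle \sigma \rangle$ with quotient map $\rho: \mathcal{C} \to \bar{\mathcal{C}}$; this exists as a flat projective scheme over $B$ because $\sigma$-orbits on each fiber lie in affine opens, a standard property of quotients of nodal curves by finite group actions. Equip $\bar{\mathcal{C}}$ with the sections $\bar{x}^\pm := \rho \circ x^\pm$, $\bar{y} := \rho \circ y^0$, and $\bar{z}_i := \rho \circ z_i^0$. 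Fiberwise, the quotient of an $r$-pinwheel curve is a chain of projective lines with $\bar{x}^\pm$ on the end component $C_\bullet/\sigma \cong \P^1$ and $\bar{y}$ on the other end, obtained by identifying the $r$ spokes. Hassett stability with weights $(1/2 + \varepsilon, 1/2 + \varepsilon, 1, \varepsilon, \ldots, \varepsilon)$ follows from stability of the original $(r,n)$-curve: each intermediate spoke component of the pinwheel contains at least one light point $z_i^j$, which descends to a $\bar{z}_i$ on the corresponding intermediate component of the quotient, providing the required strict inequality. By the universal property of $\M^1_n$, this yields the morphism $p: \L^r_n(\zeta) \to \M^1_n$.

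For surjectivity, given $(D; x^\pm, y, z_1, \ldots, z_n) \in \M^1_n$, I would construct a preimage by ``unwinding.'' Let $D_\bullet$ be the end component containing $x^\pm$, and take $C_\bullet \to D_\bullet$ to be the degree-$r$ cyclic cover branched at $x^\pm$, so that $C_\bullet \cong \P^1$ carries a $\mu_r$-action of type $\zeta$. Attach $r$ copies of the subchain $D \setminus \{D_\bullet\}$ to $C_\bullet$ at the $r$ preimages of the node, producing an $r$-pinwheel curve $C$ with the natural rotation automorphism $\sigma$. Lift $y$ to $y^0$ on a chosen spoke and propagate the orbit by $\sigma$; similarly, lift each $z_i$ by choosing (arbitrarily) a preimage to label as $z_i^0$. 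The resulting stable $(r,n)$-curve of type $\zeta$ maps to the given curve under $p$.

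Finally, the group $(\Z_r)^n$ acts on $\L^r_n(\zeta)$ by $(j_1, \ldots, j_n) \cdot (C; \sigma; x^\pm, \{y^\ell\}, \{z_i^j\}) = (C; \sigma; x^\pm, \{y^\ell\}, \{z_i^{j + j_i}\})$, i.e., by cyclic relabeling within each light orbit; this extends to families by relabeling the corresponding sections. The morphism $p$ is clearly $(\Z_r)^n$-invariant since $\rho$ identifies all points in each $\sigma$-orbit, and the preimage construction shows that each $\mathbb{C}$-fiber of $p$ is precisely a single $(\Z_r)^n$-orbit. Because $p$ is finite, surjective, and birational (the generic fiber being a free orbit of size $r^n$) between irreducible normal varieties of the same dimension, the induced morphism $\L^r_n(\zeta)/(\Z_r)^n \to \M^1_n$ is bijective on $\mathbb{C}$-points and an isomorphism by Zariski's main theorem. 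The most delicate step is verifying this identification scheme-theoretically along the boundary, where $(\Z_r)^n$ acquires nontrivial stabilizers (such as when a light orbit collapses onto a fixed point $x^\pm$); this can be handled by a local computation in the charts $U_\alpha$ of Observation~\ref{obs:snc}, on which $p$ admits an explicit description relating the $(\Z_r)^n$-action to rescalings in the Hassett coordinates.
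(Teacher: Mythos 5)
The paper does not actually prove this statement: it appears among the ``geometric observations'' of Section~\ref{subsec:geometry}, for which the authors explicitly give ``only brief indications of the proofs,'' and for Observation~\ref{obs:quotient} they give essentially nothing beyond the statement, a pointer to admissible covers, and the polytopal gloss in Remark~\ref{rmk:octants} that $p$ identifies the octants of $\Delta^r_n$. So there is no argument of record to compare yours against; what you have written is the natural proof, and its skeleton is correct. The fiberwise quotient does land in $\M^1_n$ (your stability check via the light points on intermediate spoke components is the right one, and one should also note that $\bar y$ cannot collide with any $\bar z_i$ since the $y^\ell$ are required to be distinct from all $z_i^j$); the unwinding construction does hit every point; your $(\Z_r)^n$-action agrees with the restriction to diagonal matrices of the $S(r,n)$-action the paper defines in Section~\ref{subsec:groupactionamdequivariance}; and the fiber-counting (an $(\Z_r)^{n+1}$-torsor of labelings modulo the simultaneous shift by $\sigma$) correctly gives one free $(\Z_r)^n$-orbit per generic fiber. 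Two points deserve tightening. First, your sentence ``$p$ is finite, surjective, and birational (the generic fiber being a free orbit of size $r^n$)'' is self-contradictory as written: $p$ is generically $r^n$-to-one, hence not birational; the birationality (equivalently, degree one plus characteristic zero) is a property of the induced morphism $\L^r_n(\zeta)/(\Z_r)^n \to \M^1_n$, and that is the map to which Zariski's main theorem should be applied, using normality of $\M^1_n$. Second, the existence and flatness of $\mathcal{C}/\langle\sigma\rangle$ in families is asserted rather than argued; this is indeed standard for finite group actions on families of nodal curves, and is acceptable at the level of detail the paper itself adopts here, but it is the one genuinely nontrivial input you are importing.
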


This relates $\L^r_n(\zeta)$ with moduli spaces parameterizing coverings of rational curves with marked points: it provides a compactification of the moduli space of coverings with marked orbits that is related to the Harris--Mumford admissible covers spaces \cite{HarrisMumford1982} in the same way in which Hassett spaces are related to $\overline{\calM}_{g, n}$. Other works in this direction can be found in \cite{deopurkar:14}, where  a different choice of weights is made that allows ramification points to collide.

Both $\pi_{\alpha}|_{U_{\alpha}}$ and $p$ can be interpreted from a polytopal perspective, using that $\M^1_n$ is a toric variety (and hence has an associated polytope) whereas $\L^r_n(\zeta)$, as we will see below, has an associated polytopal complex.  We return to this in Remark~\ref{rmk:octants}.

\section{Decorated nested chains}
\label{sec:chains}

Having constructed the moduli space $\L^r_n$ of stable $(r,n)$-curves, we begin the combinatorial heart of the paper: proving that the same combinatorics encodes the boundary strata in $\L^r_n$, the $\T$-cosets in the complex reflection group $S(r,n)$, and the $\Delta$-faces of the polytopal complex $\Delta^r_n$.  The key idea that yields the correspondence between these three types of objects is that all three can be indexed by the discrete data of decorated nested chains, which we now define.

\begin{definition}  Let $r \geq 2$ and $n\geq 0$.  A {\bf decorated nested chain of subsets of $[n]$} (or simply {\bf chain}, for short) is a tuple
\[\tbI = (I_1, \ldots, I_k, \a),\]
where
\[\emptyset =I_0 \subsetneq I_1 \subsetneq \cdots \subsetneq I_k  \subseteq [n]\]
and
\[\a: I_k \rightarrow \Z_r.\]
We refer to $k$ as the {\bf length} of the chain.  The possibility that $k=0$ is allowed, in which case we make the convention that there is a unique length-$0$ chain given by $\tbI = (\emptyset, \a)$ for the unique function $\a: \emptyset \rightarrow \Z_r$. If $n=0$, then the length-$0$ chain is the only chain.
\end{definition}

In the following three sections, we describe a bijective procedure for producing, from a chain $\tbI$, either a boundary stratum $S_{\tbI}$ (Proposition \ref{prop:ItoSI}), a $\T$-coset $C_{\tbI}$ (Proposition \ref{prop:ItoCI}), or a $\Delta$-face $F_{\tbI}$ of $\Delta^r_n$ (Proposition \ref{prop:ItoFI}).  Furthermore, we interpret both the dimension of a stratum (or $\T$-coset, or $\Delta$-face) and the inclusion relation between strata (or $\T$-cosets, or $\Delta$-faces) in terms of corresponding features of the chain $\tbI$.  In particular, the inclusion relation is described in terms of the following relation on chains.

\begin{definition}
Let $\tbI = (I_1, \ldots, I_k, \a)$ and $\tbJ = (J_1, \ldots, J_\ell, \b)$ be chains of length $k$ and $\ell$ respectively.  We say that $\tbI$ {\bf refines} $\tbJ$ if
\[\{J_1, \ldots, J_\ell\} \subseteq \{I_1, \ldots, I_k\}\]
and
\[\b = \a\big|_{J_\ell}.\]
(Note that if a chain $\tbI$ of length $k$ refines a chain $\tbJ$ of length $\ell$, then $J_{\ell} \subseteq I_k$ and so the restriction of $\a$ on $J_{\ell}$ is well-defined.)
\end{definition}

We will find in what follows that 
\begin{itemize}
    \item the boundary stratum $S_{\tbI}$ has codimension $k$, where $\tbI$ is a chain of length $k$,
    \item for boundary strata $S_{\tbI}$ and $S_{\tbJ}$, we have $S_{\tbI} \subseteq S_{\tbJ}$ if and only if $\tbI$ refines $\tbJ$,
\end{itemize}
and the exact same statements hold with boundary strata replaced by $\T$-cosets or $\Delta$-faces.  Thus, passing through chains provides the dimension-preserving, inclusion-preserving bijection of Theorem~\ref{thm:main}.

More precisely, we should note that everything that follows depends on the choice of a primitive $r$th root of unity $\zeta$: the boundary stratum $S_{\tbI}$ lies inside a particular component $\L^r_n(\zeta)$, the definition of the generating set $\T$ of $S(r,n)$ depends on a choice of $\zeta$, and the $\Delta$-faces of $\Delta^r_n$ are described by intersecting with certain hyperplanes whose definition depends on $\zeta$.  Thus, we make the following convention once and for all:

\begin{convention}
\label{convention}
Throughout what follows, $\zeta$ is a fixed choice of primitive $r$th root of unity.
\end{convention}

With this set-up in place, we are ready to flesh out the association of chains to each of the requisite objects.

\section{Combinatorics of the boundary strata}
\label{sec:boundarystrata}

Recall from Section~\ref{subsec:boundarystrata} that the boundary strata in $\L^r_n(\zeta)$ are the closures of the loci of curves of a fixed topological type.  We make use of the following labeling scheme for the components of the underlying curves in a boundary stratum, illustrated in Figure~\ref{fig:labelingscheme}.

\begin{notation}
\label{notation:components}
Let $(C; \sigma; x^{\pm}, \{y^{\ell}\}, \{z_i^j\})$ be a stable $(r,n)$-curve, where $C$ is an $r$-pinwheel curve of length $k$.  (Recall from Definition~\ref{def:pinwheelcurve} that this means that each of the $r$ spokes of $C$ has $k$ components.)  For each $\ell \in \Z_r$, denote by
\[C^\ell_1, \ldots, C^\ell_k\]
the components of the spoke containing $y^\ell$, where $y^\ell \in C^\ell_1$ and the other components are labeled in order from outermost to innermost.  Denote the central component by $C_{k+1}$.
\end{notation}

\begin{figure}[h!]
\begin{center}
    \begin{tikzpicture}[scale=1,  every node/.style={scale=0.6}]
    
    \filldraw (0.15,0) circle (0.05cm) node[right]{$z^1_1$};
    \filldraw (-0.075,0.1299) circle (0.05cm) node[left]{$z^2_1$};
    \filldraw (-0.075,-0.1299) circle (0.05cm) node[below]{$z^0_1$};
    
    \filldraw(1,0.25) circle (0.05cm) node[right]{$z^1_2$};
    \filldraw(-0.7165, 0.741) circle (0.05cm) node[right]{$z^2_2$};
    \filldraw(-0.283, -0.991) circle (0.05cm) node[above]{$z^0_2$};
    
    \filldraw(1,-0.25) circle (0.05cm) node[right]{$z^2_4$};
    \filldraw(-0.383,0.991) circle (0.05cm) node[right]{$z^0_4$};
    \filldraw(-0.717,-0.8) circle (0.05cm) node[above]{$z^1_4$};
    
    \filldraw (2,0) circle (0.05cm) node[right]{$z^0_3$};
    \filldraw (-1,1.732) circle (0.05cm) node[right]{$z^1_3$};
    \filldraw (-1,-1.732) circle (0.05cm) node[right]{$z^2_3$};
    
    \draw (0,0) circle (0.5cm);
    \draw (1,0) circle (0.5cm);
    \draw (2,0) circle (0.5cm);
    \filldraw (2.5,0) circle (0.05cm) node[right]{$y^0$};
    \node at (2,0.75){$C_1^0$};
    \node at (1,0.75){$C_2^0$};

    \draw (-0.5,0.866) circle (0.5cm);
    \draw (-1,1.732) circle (0.5cm);
    \filldraw (-1.25,2.165) circle (0.05cm) node[above]{$y^1$};
    \node at (-1.75, 1.75){$C^1_1$};
    \node at (-1.25, 0.75){$C^1_2$};
    
    \draw (-0.5,-0.866) circle (0.5cm);
    \draw (-1,-1.732) circle (0.5cm);
    \filldraw (-1.25,-2.165) circle (0.05cm) node[below]{$y^2$};
    \node at (-0.25, -2) {$C^2_1$};
    \node at (0.25, -1) {$C^2_2$};
    
    \draw[->] (1.1,-1.1) -- (0.4, -0.4);
    \node at (1.25, -1.25) {$C_3$};
    \end{tikzpicture}
\end{center}
\caption{An element of $\L^4_3$ with components labeled via Notation~\ref{notation:components}.}
\label{fig:labelingscheme}
\end{figure}
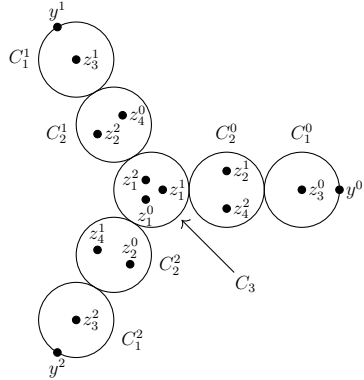

From here, the construction of a boundary stratum in $\L^r_n(\zeta)$ from a chain is as follows.

\begin{definition}
\label{def:SI}
Let $\tbI = (I_1, \ldots, I_k, \a)$ be a chain.  The associated boundary stratum $S_{\tbI} \subseteq \L^r_n(\zeta)$ is the closure of the locus of curves
\[(C; x^{\pm}; y^0, \dots, y^{r-1}; z_1^1, \dots , z_n^{r-1}) \in \L^r_n(\zeta),\] 
where $C$ is a length-$k$ $r$-pinwheel curve and, using Notation~\ref{notation:components}, we have
\begin{enumerate}
    \item for each $j \in \{1, \ldots, k\}$, the light marked points on $C^0_j$ are precisely
    \[\{z_i^{\a(i)} \; | \; i \in I_j \setminus I_{j-1}\},\] where $I_0=\emptyset$; 
    \item the light marked points on the central component $C_{k+1}$ are 
     \[\{z_i^\ell \; | \; i \in [n] \setminus I_k, \; \ell \in \Z_r\} \cup \{x^{\pm}\}.\] 
\end{enumerate}
\end{definition}

\begin{example}
\label{ex:SI}
Let $r=3$ and $n=4$, and consider the chain $\tbI = (I_1, I_2, \a)$ of length $2$, where
\[I_1 = \{3\}, \;\; I_2 = \{2,3,4\}\]
and $\a: I_2 \rightarrow \Z_3$ is given by
 \begin{align*}
    \a(2) &= 1, \quad
    \a(3) = 0, \quad
    \a(4) = 2.
\end{align*}
The associated boundary stratum $S_{\tbI}$ is the closure of the locus of elements of the topological type illustrated in Figure \ref{fig:labelingscheme} above.  In particular, notice that $I_1$ indexes the orbits on the outermost components, $I_2$ indexes the orbits on the two outermost components, and elements of $[n] \setminus I_2$ correspond to orbits in the central component.  The decoration $\a$ indicates the member of each orbit that lies on the $y^0$-spoke of the pinwheel.
\end{example}

The key combinatorial proposition about boundary strata is the following.

\begin{proposition}
\label{prop:ItoSI}
Let $r \geq 2$ and $n\geq 0$.  The association
\[\tbI \mapsto S_{\tbI}\]
is a bijection from the set of decorated nested chains of subsets of $[n]$ to the set of boundary strata of $\L^r_n(\zeta)$.  Furthermore, this bijection satisfies
\begin{enumerate}[label=(\roman*)]
    \item $\text{length}(\tbI) = \text{codim}(S_{\tbI})$,
    \item $S_{\tbI} \subseteq S_{\tbJ}$ if and only if $\tbI$ refines $\tbJ$.
\end{enumerate}
\end{proposition}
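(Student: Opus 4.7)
The plan is to construct an explicit two-sided inverse to $\tbI \mapsto S_{\tbI}$, verify (i) by a direct moduli count on the generic element of $S_{\tbI}$, and deduce (ii) from the standard edge-contraction description of inclusions of boundary strata, adapted to the $\sigma$-equivariant pinwheel setting.

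Well-definedness is immediate: the data $(I_1,\ldots,I_k,\a)$ specifies the dual graph of a unique pinwheel curve of length $k$ together with a placement of all marked orbits, and the strict inclusions $\emptyset = I_0 \subsetneq I_1 \subsetneq \cdots \subsetneq I_k$ force $|I_j \setminus I_{j-1}|\geq 1$, so each spoke component $C^0_j$ carries at least one light point, which together with the heavy $y^\ell$ and half-nodes ensures stability. For the inverse, given a boundary stratum $S$ I would pick a representative curve $C$, which by Lemma~\ref{lem:Lrnobjects} has a uniquely determined pinwheel structure of some length $k$. Define $I_j \subseteq [n]$ to be the set of indices $i$ whose orbit $\{z_i^0,\ldots,z_i^{r-1}\}$ lies on the union of the first $j$ outermost spoke layers, and for $i \in I_k$ let $\a(i)\in\Z_r$ be the unique index with $z_i^{\a(i)}$ on the $y^0$-spoke. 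By $\sigma$-equivariance these definitions depend only on the dual graph of $C$, stability forces the strict inclusions, and distinct chains evidently yield distinct dual graphs, giving a two-sided inverse.

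For (i), I would compute dimensions directly. The smooth locus of $\L^r_n(\zeta)$ has dimension $n$: $x^\pm$ and the $p^\ell$ are pinned on the central $\P^1$ by the type-$\zeta$ normalization, leaving exactly one free parameter per light orbit. For the generic curve in $S_{\tbI}$ with $\tbI$ of length $k$, the central component contributes $n - |I_k|$ parameters (one per orbit not on a spoke), while by $\sigma$-equivariance the $r$ spokes are identified and one counts the moduli of a single spoke, namely $(|I_1|-1) + \sum_{j=2}^k (|I_j\setminus I_{j-1}|-1) = |I_k|-k$ after pinning two distinct heavy points on each component and absorbing the residual $\C^*$-automorphism with one light point. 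Summing gives $\dim S_{\tbI} = n - k$, hence $\codim S_{\tbI} = k$.

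For (ii), I would invoke the standard fact that $S_{\tbI} \subseteq S_{\tbJ}$ if and only if the dual graph of $\tbJ$ is obtained from that of $\tbI$ by contracting a subset of edges. The $\sigma$-equivariance forces admissible contractions to respect $\sigma$-orbits, and hence to collapse full layer-boundaries of the pinwheel simultaneously: contracting the $r$ edges between layers $j$ and $j+1$ (for $j<k$) merges those layers and removes $I_j$ from the chain, while contracting the $r$ innermost edges (between layer $k$ and the central component) absorbs the outermost spoke components into the central one, removes $I_k$ from the chain, and restricts $\a$ to $I_{k-1}$. Iterating, admissible contractions of the dual graph of $\tbI$ biject with refinements of $\tbI$. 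The main obstacle I anticipate is the bookkeeping in this last step: one must verify that every intermediate curve in an iterated mixed contraction remains a stable $(r,n)$-curve of type $\zeta$, and that the induced decoration transforms compatibly with compositions of interior and innermost contractions.
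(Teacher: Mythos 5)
Your proposal is correct, and the overall architecture (reading the chain off the pinwheel dual graph to get a two-sided inverse; deducing (ii) from the edge-contraction description of stratum inclusions) matches the paper's proof. The genuine difference is in item (i): the paper picks a function $\alpha$ compatible with the generic curve of $S_{\tbI}$ and uses the morphism $\pi_\alpha$ to map the stratum birationally onto a boundary stratum of the Hassett space $\M^1_n$ whose dual graph has $k$ edges, then cites the known codimension formula for Hassett spaces; you instead do a direct parameter count on the generic fiber ($n-|I_k|$ from the central component, $|I_k|-k$ from one spoke identified with the others via $\sigma$). Both work. Your count is self-contained and is consistent with the product decomposition of Proposition~\ref{lem:productdecompositionofstratum}, whose factors have total dimension $(n-|I_k|)+\sum_{j}(|I_j\setminus I_{j-1}|-1)=n-k$; the paper's route outsources the verification that the count is exhaustive to the established theory of Hassett spaces. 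The ``obstacle'' you flag at the end is not a real one: contracting edges of a pinwheel dual graph only merges light points onto surviving components, and the central component always retains $x^{\pm}$ together with its $r\geq 2$ half-nodes, so every intermediate curve in an iterated contraction is automatically a stable $(r,n)$-curve of the same type.
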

\begin{proof}
The surjectivity of $\tbI \mapsto S_{\tbI}$ is clear, since if $S \subseteq \L^r_n(\zeta)$ is a boundary stratum with associated dual graph $G$, then $G$ has a pinwheel shape and thus one can define $I_j \subseteq [n]$ to index the marked points on the outermost $j$ vertices of the $y^0$-spoke of the pinwheel.  The requirement that $z_i^{\a(i)}$ lies on the $y^0$-spoke of the pinwheel for all $i \in I_k$ thus defines a function $\a: I_k \rightarrow \Z_r$ for which $S_{\tbI} = S$.

Injectivity of the association $\tbI \mapsto S_{\tbI}$ follows from item (ii) of the proposition, since it is only possible that $\tbI$ and $\tbJ$ refine one another if $\tbI = \tbJ$.  Item (ii), on the other hand, follows directly from the containment of boundary strata described in Section~\ref{subsec:boundarystrata}.  In particular, if $S_{\tbI}$ and $S_{\tbJ}$ are boundary strata with associated dual graphs $G_{\tbI}$ and $G_{\tbJ}$, then $S_{\tbI} \subseteq S_{\tbJ}$ if and only if $G_{\tbJ}$ can be obtained from $G_{\tbI}$ by edge-contraction of some subset of the edges of $G_{\tbI}$.  Since contracting edges combines the marked points on adjacent vertices, this is the case if and only if $\tbI$ refines $\tbJ$.

Finally, for item (i), let $S = S_{\tbI}$ be a boundary stratum with associated dual graph $G$, where $\text{length}(\tbI) =k$ and therefore $G$ is a pinwheel graph in which each spoke has $k$ vertices.  Choose any
\[\alpha: [n]_0 \rightarrow \Z_r\]
that is compatible with a generic element of $S$ (that is, with any curve with dual graph exactly $G$), where compatibility is defined as in Definition~\ref{def:compatible}.  In the notation of Section~\ref{subsec:hassett-maps}, there is a morphism
\[\pi_\alpha: \L^r_n(\zeta) \rightarrow \M^1_n\]
that maps $S(\zeta)$ birationally onto the boundary stratum $S_0 \subseteq \M^1_n$ whose dual graph $G_0$ consists of only the $y^0$-spoke of $G$ together with the central vertex.  This dual graph $G_0$ has $k$ edges, so the well-known results on Hassett spaces imply that $\text{codim}_{\M^1_n}(S_0)=k$.  Given that $\L^r_n(\zeta)$ is birational to $\M^1_n$, it follows that \[\text{codim}_{\L^r_n(\zeta)}\big(S(\zeta)\big) = \text{codim}_{\M^1_n}(S_0)=k,\]
as claimed.
\end{proof}

In addition to encoding the dimension and inclusion of boundary strata, we note that the chain $\tbI$ also encodes one further piece of geometric information: the decomposition of a boundary stratum into a product of smaller-dimensional moduli spaces.  More precisely, let $\L_n$ denote the Losev--Manin space mentioned in the introduction; in the language of Hassett spaces, this can be described as
\[\L_n = \M_{0,(1,1,\epsilon, \ldots, \epsilon)},\]
where there are $n$ marked points of weight $\epsilon$ and $\epsilon \leq 1/n$.  In particular, elements of $\L_n$ are chains of projective lines with two ``heavy" marked points (one on each end component of the chain) and $n$ ``light" marked points. It is worth stressing that, while we used the Losev--Manin space in the introduction to motivate the present work, the spaces $\L_n$ are not actually the $r=1$ case of the spaces $\L^n_r$.  See Remark~\ref{rmk:r=1}. 

With this notation, the following proposition gives a product decomposition of the boundary stratum $S_{\tbI}$, in which the factors can be read off directly from the chain $\tbI$.

\begin{proposition}\label{lem:productdecompositionofstratum}
For any chain $\tbI = (I_1, \ldots, I_k, \a)$, there is a natural isomorphism between the boundary stratum $S_{\tbI}$ and a product of smaller-dimensional moduli spaces, as follows:
\[S_{\tbI}\cong \L^r_{\abs{[n]\setminus I_k}}\times \prod_{j=1}^k \L_{\abs{I_j\setminus I_{j-1}}}.\]
\end{proposition}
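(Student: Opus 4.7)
The plan is to exhibit the isomorphism by decomposing the moduli data of each curve in $S_{\tbI}$ according to the $\mu_r$-orbits of vertices in its dual graph. Using Notation~\ref{notation:components}, the dual graph $G$ of a generic curve in $S_{\tbI}$ has exactly one $\sigma$-fixed vertex (the central component $C_{k+1}$) and, for each $j \in \{1,\ldots,k\}$, exactly one orbit of size $r$ consisting of the vertices $\{C^{\ell}_{j}\}_{\ell \in \Z_r}$. Each of these $k+1$ vertex-orbits will contribute one factor to the product on the right-hand side.

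I would then define the isomorphism by recording, for each orbit, the moduli data attached to one representative vertex (which suffices by the $\sigma$-symmetry). For the central orbit, collapse every spoke to its attachment point on the central component; the resulting curve, equipped with $x^{\pm}$, with the $\sigma$-orbit of former attachment points as the new heavy $y$-orbit, and with the orbits $\{z_{i}^{j}\}_{i\in[n]\setminus I_{k}}$, is a stable $(r,\abs{[n]\setminus I_{k}})$-curve of type $\zeta$, and thus an element of $\L^r_{\abs{[n]\setminus I_{k}}}$. For each $j \in \{1,\ldots,k\}$, take the component $C^{0}_{j}$ of the $y^{0}$-spoke (together with any further degenerations), equipped with its two half-nodes as the heavy marked points and with the light points $\{z_{i}^{\a(i)} : i \in I_{j} \setminus I_{j-1}\}$; this is an element of $\L_{\abs{I_{j} \setminus I_{j-1}}}$. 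The inverse map is the symmetric gluing construction: given the central $(r,\abs{[n]\setminus I_{k}})$-curve and $k$ Losev--Manin chains, attach $r$ copies of each chain to the corresponding $\mu_r$-orbit of heavy points on the central curve, and propagate the markings via the decoration $\a$ and the automorphism $\sigma$ so that $z_{i}^{\a(i)}$ sits on the $y^{0}$-spoke for every $i \in I_{k}$.

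The main technical step will be to promote this pointwise bijection to an isomorphism of schemes. This requires the projection and gluing constructions to extend functorially over an arbitrary base $B$, which reduces to standard facts about flat families of nodal curves and $\mu_r$-equivariant gluing along sections; the universal property of the fine moduli space $\L^r_{n}$ then yields the claimed natural isomorphism.
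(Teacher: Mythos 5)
Your proposal is correct and takes essentially the same approach as the paper, whose proof is a one-line observation that the factor $\L^r_{|[n]\setminus I_k|}$ parameterizes the central component and each $\L_{|I_j\setminus I_{j-1}|}$ parameterizes one component of a single spoke (the others being determined by the symmetry); your write-up just spells out the projection and gluing maps in more detail. The only tiny imprecision is that for $j=1$ the outermost spoke component has a single half-node, so its two heavy points are that half-node together with $y^0$ rather than ``two half-nodes.''
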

\begin{proof}
This is essentially immediate from the definition of $S_{\tbI}$: the factor of $\L^r_{|[n] \setminus I_k|}$ parameterizes the central component, and the factors $\L_{|I_j \setminus I_{j-1}|}$ each parameterize a component of one spoke of the pinwheel (which determines all of the other spokes).
\end{proof}

\section{Combinatorics of the complex reflection group}
\label{sec:S(r,n)}

We next introduce the group whose structure is combinatorially related to the boundary stratification of $\L^r_n(\zeta)$.  This group, denoted $S(r,n)$, consists of all $n \times n$ matrices whose only nonzero entries are in the group $\mu_r$ of $r$th roots of unity, and with exactly one nonzero entry in each row and column.  (By convention, $S(r,0)$ is a trivial group.)  Note that for any $n \geq 0$, there is a natural group isomorphism
 \[S_n = S(1,n),\]
 where the permutation $\sigma \in S_n$ is identified with the matrix whose $i$th column is the $\sigma(i)$th standard basis vector.

 \begin{remark}
 The group $S(r,n)$ is an example of a  complex reflection group (a finite group acting on $\C^n$ generated by elements whose action fixes a complex hyperplane), and in the classification of complex reflection groups, it is denoted $G(r,1,n)$.  It is sometimes also referred to as the ``generalized symmetric group" and can be equivalently described as the wreath product $\mu_r \wr S_n$.  For more on complex reflection groups, see \cite{Lehrer2009UnitaryRG}.
 \end{remark}

 It is well-known that the symmetric group $S_n$ is generated by the set of adjacent transpositions.  The complex reflection groups $S(r,n)$ have an analogous generating set: define
 \[\mathcal{T} = \{s_0, s_1, \ldots, s_{n-1}\} \subseteq S(r,n),\]
 where
 \[s_0 :=
 \left( \begin{array}{ccccc} \zeta & 0 &0  & \cdots &0\\0& 1 & 0& \cdots & 0\\ 0 & 0 & 1 & \cdots & 0\\ & & & \ddots & \\ 0& 0& \cdots& 0& 1\end{array}\right)\]
 and for $1 \le i \le n - 1$, the element $s_i \in S_n \subseteq S(r,n)$ is the adjacent transposition swapping $i$ and $i+1$, or in other words, the matrix obtained by swapping the $i$th and $(i+1)$st columns of the $n \times n$ identity matrix.  (Note that the definition of $s_0$ makes use of the choice of primitive $r$th root of unity $\zeta$ in Convention~\ref{convention}.)

 It is straightforward to see that $\mathcal{T}$ generates $S(r,n)$: multiplying the identity matrix $I$ by adjacent transpositions can bring any column to the first column, and multiplying by powers of $s_0$ can change the entry in the first column to any power of $\zeta$. More generally, the following remark describes the structure of subgroups of $S(r,n)$ generated by elements of $\mathcal{T}$.

\begin{remark} \label{rmk:subgroupsG} Fix a subset $\calS \subseteq \T$ and let $H_\calS$ be the subgroup of $S(r,n)$ generated by the subset $\calS$. Define $\{s_{j_1}, \dots, s_{j_k}\} := \mathcal{T} \setminus \calS$, where $0 \leq j_1 < \cdots < j_k \leq n-1$. Then $H_\calS$ is equal to the group of block-diagonal matrices \[ \begin{pmatrix}
  S(r,j_1)  & & & & & \\
  &  S_{j_2-j_1}  & & & & \\
  & & S_{j_3-j_2}  & & & \\
  & & & \ddots & &  \\ 
  & & & & S_{j_k-j_{k-1}}  & \\
  & & & & &  S_{n-j_k} 
\end{pmatrix} \subseteq S(r,n). \]
That is, the upper-left block is an element of the complex reflection group $S(r,j_1)$, while the remaining blocks are elements of the indicated symmetric groups.
\end{remark}

The key objects of interest for this paper are the right cosets in $S(r,n)$ of the subgroups described by Remark~\ref{rmk:subgroupsG}.  The following definition establishes the terminology.

\begin{definition} \label{def:Tcoset}
A {\bf $\T$-coset} in $S(r,n)$ is a right coset of the form
\[\langle t_1, \ldots, t_d \rangle \cdot A \subseteq S(r,n)\]
for some $d \geq 0$, where $t_1, \ldots, t_d \in \mathcal{T}$ and $A \in S(r,n)$.  We say that a $\T$-coset as above, where $t_1, \ldots, t_d$ are distinct, has {\bf dimension} $d$ or {\bf codimension} $n-d$.
\end{definition}

In particular, a singleton $\{A\} \subseteq S(r,n)$ is a $0$-dimensional $\T$-coset.  The $1$-dimensional $\T$-cosets are of the form
\[\langle s_i \rangle \cdot A\]
for $s_i \in \mathcal{T}$, and they thus have either two elements or $r$ elements, depending on whether $i \geq 1$ or $i=0$.  Since $\mathcal{T}$ generates $S(r,n)$, the only $n$-dimensional $\T$-coset is the entire group.

Analogously to Definition~\ref{def:SI}, we now describe a procedure for producing a $\T$-coset from a chain $\tbI$.

\begin{definition}
\label{def:CI}
Let $\tbI = (I_1, \ldots, I_k, \a)$ be a chain.  Define the subgroup $H_{\tbI} \subseteq S(r,n)$ by
\[H_{\tbI} := \langle \{ s_i \; | \; n-i \notin \{|I_1|, \ldots, |I_k|\} \rangle,\]
and let $A \in S(r,n)$ be any matrix satisfying the following two conditions:
\begin{enumerate}[label=(\roman*)]
    \item for each $j \in \{1, \ldots, k\}$, the last $|I_j|$ rows of $A$ have nonzero entries in the columns indexed by $I_j$---that is,
    \begin{equation}
        \label{eq:Ij}
I_j = \{ b \in [n] \; | \; A_{a b} \neq 0 \text{ for some } a > n-|I_j|\},
    \end{equation}
    where $A_{ab}$ denotes the entry in the $a$th row and $b$th column of $A$; 
    \item for each $i \in I_k$, the unique nonzero entry in column $i$ of $A$ is $\zeta^{-\a(i)}$.
\end{enumerate}
We define the $\T$-coset associated to $\tbI$ as
\[C_{\tbI} = H_{\tbI} \cdot A.\]
\end{definition}

To make the elements of $C_{\tbI}$ more explicit, note that in the notation of Remark~\ref{rmk:subgroupsG}, we have $H_{\tbI} = H_{\calS(\tbI)}$ for the set
\begin{equation}
    \label{eq:S(I)}
\calS(\tbI) := \{s_i \; | \; n-i \notin \{|I_1|, \ldots, |I_k|\} \subseteq \T.
\end{equation}
Thus, the set $\{s_{j_1}, \ldots, s_{j_k}\} := \mathcal{T} \setminus \calS(\tbI)$ in Remark~\ref{rmk:subgroupsG} is given by
\begin{align*}
    j_1 &= n-|I_k|\\
    j_2 &= n-|I_{k-1}|\\
    &\vdots\\
    j_k &= n-|I_1|,
\end{align*}
and therefore $H_{\tbI}$ is the group of block-diagonal matrices
\[ H_{\tbI} = \begin{pmatrix}
  S(r,|[n] \setminus I_k|)  & & & & & \\
  &  S_{|I_k \setminus I_{k-1}|}  & & & & \\
  & & S_{|I_{k-1} \setminus I_{k-2}|}  & & & \\
  & & & \ddots & &  \\ 
  & & & & S_{|I_2 \setminus I_1|}  & \\
  & & & & &  S_{|I_1|} 
\end{pmatrix} \subseteq S(r,n). \]
From here, one sees from Remark~\ref{rmk:subgroupsG} and the definition of $A$ in Definition~\ref{def:CI} that $C_{\tbI}$ is equal to the set of matrices illustrated in Figure~\ref{fig:CIelements}.  The conditions defining the matrix $A$ are equivalent to requiring that $A$ belong to this set, which in particular implies that the definition of $C_{\tbI}$ does not depend on the choice of $A$ satisfying those conditions.

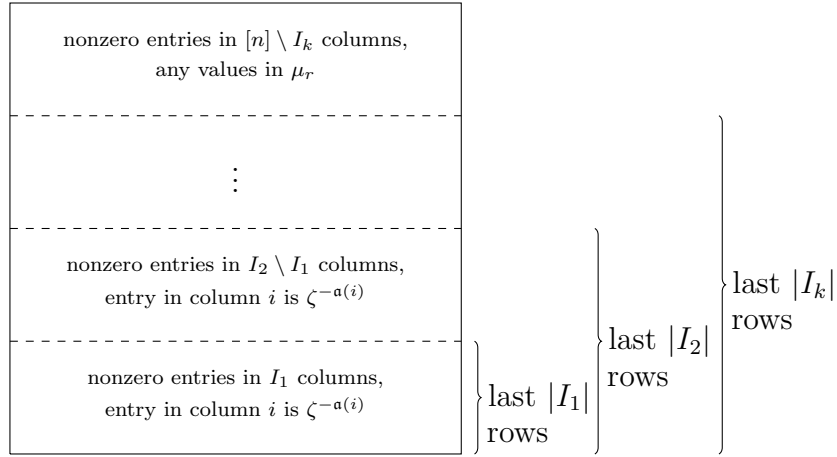
\begin{figure}
    \centering
    \begin{tikzpicture}
    \draw (0,0) -- (6,0) -- (6,6) -- (0,6) -- (0,0);
    \draw[dashed] (0,1.5) -- (6,1.5);
    
    \draw[dashed] (0,3) -- (6,3);
    
    \draw[dashed] (0,4.5) -- (6,4.5);
    
    \node at (3,5.5) {\tiny nonzero entries in $[n] \setminus I_k$ columns,};
    \node at (3,5.1) {\tiny any values in $\mu_r$};
    
        \node at (3, 3.75){$\vdots$};
    
    \node at (3,2.5) {\tiny nonzero entries in $I_2 \setminus I_1$ columns,};
    \node at (3,2.1) {\tiny entry in column $i$ is $\zeta^{-\a(i)}$};
    
    \node at (3,1) {\tiny nonzero entries in $I_1$ columns,};
    \node at (3,0.6) {\tiny entry in column $i$ is $\zeta^{-\a(i)}$};
    
    \draw[decoration={brace,mirror,raise=5pt},decorate]
  (6,0) -- node[right=6pt] {last $|I_1|$} (6,1.5);
  \node at (6.75,0.25){rows};
  
      \draw[decoration={brace,mirror,raise=5pt},decorate]
  (7.6,0) -- node[right=6pt] {last $|I_2|$} (7.6,3);
  \node at (8.35, 1){rows};
  
        \draw[decoration={brace,mirror,raise=5pt},decorate]
  (9.25,0) -- node[right=6pt] {last $|I_k|$} (9.25,4.5);
  \node at (10, 1.75){rows};
  
    \end{tikzpicture}
    \caption{A typical element of the $\T$-coset $C_{\tbI}$.  The action of $H_{\tbI}$ permutes rows within each of the blocks separated by dotted lines, and multiplies elements of the top-most block by $r$th roots of unity.}
        \label{fig:CIelements}
\end{figure}

To illustrate the construction of Definition~\ref{def:CI}, we compute the associated $\T$-coset for the same chain that we considered in Section~\ref{sec:boundarystrata}.

\begin{example}
\label{ex:CI}
As in Example \ref{ex:SI}, let $r=3$ and $n=4$, and let $\tbI$ be the chain \[\tbI = \left(\{3\}, \{2,3,4\}, \a \right),\]
where $\a: I_2 \rightarrow \Z_3$ is given by
\[\a(2) = 1, \;\; \a(3) = 0, \;\; \a(4) = 2.\]
Then
\[H_{\tbI} = \langle s_i \; | \; 4-i \notin \{1,3\} \rangle = \langle s_0, s_2 \rangle,\]
and $C_{\tbI} = H_{\tbI} \cdot A$ for any matrix $A$ such that
\begin{enumerate}[label=(\roman*)]
    \item the last row has its nonzero entry in column $3$, and that entry is $\zeta^0$; and
    \item the last three rows have their nonzero entries in columns $2$, $3$, and $4$, and those entries are $\zeta^{-1}=\zeta^2$, $\zeta^0$, and $\zeta^{-2}=\zeta^1$, respectively. \end{enumerate}
The action of $H_{\tbI}$ on matrices of this form multiplies the first row by roots of unity and swaps the second and third rows.  Thus, we have
\[C_{\tbI} = \left\{\left( 
\begin{array}{cccc} 
\zeta^i & 0 & 0 & 0\\
0 & 0 & 0 & \zeta^1\\
0 & \zeta^2 & 0 & 0\\
0 & 0 & \zeta^0 & 0
\end{array}\right) \Bigg| \; i \in \Z_3\right\}  \; \cup \; \left\{\left( 
\begin{array}{cccc} 
\zeta^i & 0 & 0 & 0\\
0 & \zeta^2 & 0 & 0\\
0 & 0 & 0 & \zeta^1\\
0 & 0 & \zeta^0 & 0
\end{array}\right) \; \Bigg| \; i \in \Z_3\right\}.\]
\end{example}

With the association $\tbI \mapsto C_{\tbI}$ established, we are prepared to state an analogue for $\T$-cosets of Proposition~\ref{prop:ItoSI}.

\begin{proposition}
\label{prop:ItoCI}
Let $r \geq 2$ and $n\geq 0$.  The association
\[\tbI \mapsto C_{\tbI}\]
is a bijection from the set of decorated nested chains of subsets of $[n]$ to the set of $\T$-cosets in $S(r,n)$.  Furthermore, this bijection satisfies
\begin{enumerate}[label=(\roman*)]
    \item $\text{length}(\tbI) = \text{codim}(C_{\tbI})$,
    \item $C_{\tbI} \subseteq C_{\tbJ}$ if and only if $\tbI$ refines $\tbJ$.
\end{enumerate}
\end{proposition}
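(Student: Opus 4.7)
The plan is to mirror the structure of the proof of Proposition~\ref{prop:ItoSI}: establish the codimension formula (i) first directly from the construction of $H_{\tbI}$, then prove the refinement statement (ii) by comparing explicit representatives of the cosets, and finally deduce injectivity from (ii) and handle surjectivity by a direct reverse-engineering of the chain from any $\T$-coset.

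For (i), I would simply count. Because $|I_1|<\cdots<|I_k|$ are $k$ distinct elements of $\{1,\ldots,n\}$, the set $\calS(\tbI)$ of \eqref{eq:S(I)} has cardinality exactly $n-k$. Since these $n-k$ generators of $H_{\tbI}$ are distinct elements of $\T$, Definition~\ref{def:Tcoset} gives $\dim(C_{\tbI})=n-k$ and hence $\codim(C_{\tbI})=k=\text{length}(\tbI)$.

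For surjectivity and (ii) I would work with the block-diagonal description of $H_{\tbI}$ supplied by Remark~\ref{rmk:subgroupsG} together with Figure~\ref{fig:CIelements}. Given an arbitrary $\T$-coset $C=\langle t_1,\ldots,t_d\rangle\cdot B$ with distinct $t_i$, write $\T\setminus\{t_1,\ldots,t_d\}=\{s_{j_1},\ldots,s_{j_k}\}$ with $j_1<\cdots<j_k$, set $|I_m|:=n-j_{k+1-m}$, and then define $I_m\subseteq[n]$ via \eqref{eq:Ij} applied to $B$ (the row-size determines the block from the bottom, and the block's nonzero columns form $I_m$; the nesting $I_1\subsetneq\cdots\subsetneq I_k$ is automatic). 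For $i\in I_k$, the unique nonzero entry of column $i$ of $B$ lives in a bottom $S_{|I_m\setminus I_{m-1}|}$-block; since these symmetric-group blocks only permute rows without rescaling, the \emph{value} of that entry is an invariant of the coset $C$ and defines $\a(i)\in\Z_r$ by the prescription in Definition~\ref{def:CI}(ii). This produces a chain $\tbI$ with $C=C_{\tbI}$, giving surjectivity.

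For the refinement statement, the easy direction is essentially a rewrite of Definition~\ref{def:CI}: if $\tbI$ refines $\tbJ$, then $\{|J_1|,\ldots,|J_\ell|\}\subseteq\{|I_1|,\ldots,|I_k|\}$ gives $\calS(\tbJ)\supseteq\calS(\tbI)$ and hence $H_{\tbI}\subseteq H_{\tbJ}$; and any $A$ representing $C_{\tbI}$ satisfies the two conditions for $\tbJ$ as well because $\{J_{j'}\}\subseteq\{I_j\}$ and $\b=\a|_{J_\ell}$, so $C_{\tbI}=H_{\tbI}A\subseteq H_{\tbJ}A=C_{\tbJ}$. For the converse, I would fix $A\in C_{\tbI}\subseteq C_{\tbJ}$ so that $A$ is a common representative, then note that $H_{\tbI}\cdot A\subseteq H_{\tbJ}\cdot A$ forces $H_{\tbI}\subseteq H_{\tbJ}$. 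Comparing the block-diagonal shapes of $H_{\tbI}$ and $H_{\tbJ}$ (with their canonical identification of the top block as the $S(r,\cdot)$-factor) shows that the block boundaries of $H_{\tbJ}$ are among those of $H_{\tbI}$, i.e.\ $\{|J_{j'}|\}\subseteq\{|I_j|\}$; reading off the corresponding $J_{j'}$ and $I_j$ from $A$ via \eqref{eq:Ij} then shows $J_{j'}=I_j$ whenever $|J_{j'}|=|I_j|$, and comparing column entries gives $\b=\a|_{J_\ell}$. Injectivity of $\tbI\mapsto C_{\tbI}$ then follows formally: two chains refining each other must be equal.

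The main obstacle I anticipate is the clean bookkeeping in the converse of (ii), namely the step ``$H_{\tbI}\subseteq H_{\tbJ}$ implies that the block decomposition of $H_{\tbI}$ refines that of $H_{\tbJ}$.'' In a Coxeter-type setting this is a well-known feature of parabolic subgroups, but for $S(r,n)$ one must be careful that the first block (the $S(r,\cdot)$ factor) is distinguished from the symmetric-group blocks and can be identified from $H_{\tbI}$ purely group-theoretically, e.g.\ by detecting which reflections are not of order two. Once that lemma is in hand, the rest of the proof is essentially bookkeeping on the description of $C_{\tbI}$ summarized by Figure~\ref{fig:CIelements}.
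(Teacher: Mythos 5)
Your proposal is correct and follows essentially the same route as the paper's proof: surjectivity by reading the chain off the block structure of a coset representative via \eqref{eq:Ij}, the forward direction of (ii) by choosing a common representative $A$, and the converse by observing that the coset only permutes rows within blocks, so containment forces the $\tbJ$-blocks to be unions of $\tbI$-blocks. The obstacle you anticipate in the converse is resolved in the paper exactly as you suggest, by arguing directly with the explicit matrix description of the coset elements (Figure~\ref{fig:CIelements}) rather than via an abstract parabolic-subgroup lemma.
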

\begin{proof}
The surjectivity of $\tbI \mapsto C_{\tbI}$ is clear, since given an arbitrary $\T$-coset
\[C=\langle s_{\ell_1}, \ldots, s_{\ell_{n-k}} \rangle \cdot A,\]
one can define sets $I_1 \subseteq \cdots \subseteq I_k$ by \eqref{eq:Ij}.  Condition (ii) of Definition~\ref{def:CI} then uniquely defines a function $\a: I_k \rightarrow \Z_r$, and by construction, setting $\tbI = (I_1, \ldots, I_k, \a)$ gives $C = C_{\tbI}$.

Injectivity of this association will follow from item (ii), while item (i) is immediate from the definition of $\text{codim}(C_{\tbI})$.  Thus, what remains is to prove item (ii).

Let
\[\tbI = (I_1, \ldots, I_k, \a)\]
and
\[\tbJ = (J_1, \ldots, J_{\ell}, \b),\]
and suppose that $\tbI$ refines $\tbJ$.  Then
\[\{I_1, \ldots, I_k\} \supseteq \{J_1, \ldots, J_\ell\}\]
and hence
\[\{|I_1|, \ldots, |I_k|\} \supseteq \{|J_1|, \ldots, |J_\ell|\}.\]
From here, it is straightforward to unpack that
\[\calS(\tbI) \subseteq \calS(\tbJ),\]
where $\calS(\tbI)$ is defined by \eqref{eq:S(I)} and $\calS(\tbJ)$ is defined analogously.  Furthermore, a matrix $A$ satisfying the conditions of Definition~\ref{def:CI} for $\tbI$ will satisfy the same conditions for $\tbJ$, so the same matrix can be chosen to represent both cosets.  It follows that $C_{\tbI} \subseteq C_{\tbJ}$.

Conversely, suppose that $C_{\tbI} \subseteq C_{\tbJ}$.  This means that one can choose the same representative for both cosets, so we have
\begin{equation}
    \label{eq:CIcontainedCJ}
 H_{\tbI}  \cdot A  \subseteq H_{\tbJ} \cdot A
\end{equation}
for a matrix $A \in C_{\tbI}$.  The elements of $C_{\tbI}$ differ from $A$ by permuting the rows within blocks as in Figure~\ref{fig:CIelements}, so in order for the containment \eqref{eq:CIcontainedCJ} to hold, the corresponding blocks for $C_{\tbJ}$ must be unions of the blocks for $C_{\tbI}$.  That is, we must have
\[\{|I_1|, \ldots, |I_k|\} \supseteq \{|J_1|, \ldots, |J_\ell|\}.\]
In particular, for any $i \in \{1, \ldots, \ell\}$, we must have $|J_i| = |I_j|$ for some $j \in \{1, \ldots, k\}$, so the set $J_i$ indexing the columns with nonzero entries in the last $|J_i|$ rows of $A$ is equal to the set $I_j$ indexing the columns with nonzero entries in the last $|I_j|$ rows of $A$.  That is, we have
\[\{I_1, \ldots, I_k\} \supseteq \{J_1, \ldots, J_\ell\}.\]
Since $\a$ and $\b$ are determined by the same matrix $A$, we also have $\b = \a|_{I_{k}}$, concluding the proof that $\tbI$ refines $\tbJ$.
\end{proof}

We end this section by noting that, analogously to the way in which Proposition~\ref{lem:productdecompositionofstratum} gives a product decomposition of a boundary stratum in terms of the combinatorial data of a chain, the $\T$-cosets in $S(r,n)$ have decompositions as products of groups dictated by their associated chains.  To state the decomposition, for any chain $\tbI$, let
\begin{equation}
    \label{eq:PhiI}
\Phi_{\tbI}: S(r,|[n] \setminus I_k|) \times S(r,|I_k \setminus I_{k-1}|) \times \cdots \times S(r,|I_2 \setminus I_1|) \times S(r,|I_1|) \hookrightarrow S(r,n)
\end{equation}
be the embedding of the left-hand side as block-diagonal matrices in $S(r,n)$, where the first factor is embedded as the first $|[n] \setminus I_k|$ rows and the columns indexed by $[n] \setminus I_k$, and similarly for the remaining factors.

\begin{proposition} \label{cor:prodCI}  For any chain $\tbI=(I_1, \cdots, I_k, \a)$, let $C_{\tbI} = H_{\tbI} \cdot A$ be the associated $\T$-coset.  Then there is a natural isomorphism between $H_{\tbI}$ and a product of complex reflection groups, as follows:
\begin{equation}
\label{eq:0}
H_{\tbI} \cong S(r,|[n] \setminus I_k|) \times \prod_{j=1}^{k} S_{|I_j\setminus I_{j-1}|}.
\end{equation}
More specifically, let $\Phi_{\tbI}$ be as in \eqref{eq:PhiI} and let $A_j \in S(r, |I_j\setminus I_{j-1}|)$ be any matrix whose entry in the $\ell$th column is $\zeta^{-\a(\ell)}$ for each $\ell \in I_j \setminus I_{j-1}$.  Then
\begin{equation}
    \label{eq:CIdecomp}
C_{\tbI} = \Phi_{\tbI}\left( S(r,|[n] \setminus I_k|) \times \prod_{j=1}^{k} 
S_{|I_j \setminus I_{j-1}|} \cdot A_j\right).
\end{equation}
\end{proposition}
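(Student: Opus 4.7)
The plan is to first identify $H_{\tbI}$ as an internal product of groups using the block-diagonal description from Remark~\ref{rmk:subgroupsG}, then exhibit a particularly nice representative of the coset $C_{\tbI}$ that is itself block-diagonal, and finally combine these via the homomorphism property of $\Phi_{\tbI}$.

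For the group isomorphism \eqref{eq:0}, I would apply Remark~\ref{rmk:subgroupsG} to $\calS = \calS(\tbI)$ as defined in \eqref{eq:S(I)}. Its complement in $\T$ is $\{s_{n-|I_k|}, s_{n-|I_{k-1}|}, \ldots, s_{n-|I_1|}\}$, and substituting these indices into the block-diagonal description from Remark~\ref{rmk:subgroupsG} yields precisely the form of $H_{\tbI}$ already displayed just after Definition~\ref{def:CI}. This is exactly the image under $\Phi_{\tbI}$ of the product on the right-hand side of \eqref{eq:0}, with the $j$-th factor $S_{|I_j \setminus I_{j-1}|}$ sitting inside $S(r,|I_j \setminus I_{j-1}|)$ in the appropriate block slot.

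To obtain the coset decomposition \eqref{eq:CIdecomp}, I construct a specific representative of $C_{\tbI}$. Let $A_0$ be the identity matrix in $S(r, |[n] \setminus I_k|)$, let the $A_j$ be as in the statement, and set
\[
A := \Phi_{\tbI}(A_0, A_k, A_{k-1}, \ldots, A_1) \in S(r,n).
\]
I would then verify that $A$ satisfies the two conditions of Definition~\ref{def:CI}: condition (i) holds because the last $|I_j|$ rows of $A$ contain precisely the blocks $A_j, A_{j-1}, \ldots, A_1$, whose nonzero columns are indexed exactly by $I_j$; and condition (ii) holds because for $i \in I_j \setminus I_{j-1}$ the unique nonzero entry in column $i$ lies in the $A_j$ block and equals $\zeta^{-\a(i)}$ by construction. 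Hence $A \in C_{\tbI}$, so $C_{\tbI} = H_{\tbI} \cdot A$.

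To conclude, I would use the fact that $\Phi_{\tbI}$ is a group homomorphism and that right-multiplication by a block-diagonal matrix acts block by block, so
\[
C_{\tbI} \;=\; \Phi_{\tbI}\bigl(S(r,|[n] \setminus I_k|),\, S_{|I_k \setminus I_{k-1}|},\, \ldots,\, S_{|I_1|}\bigr) \cdot \Phi_{\tbI}(A_0, A_k, \ldots, A_1) \;=\; \Phi_{\tbI}\bigl(S(r,|[n]\setminus I_k|),\, S_{|I_k\setminus I_{k-1}|}\cdot A_k,\, \ldots,\, S_{|I_1|}\cdot A_1\bigr),
\]
using $A_0 = \mathrm{Id}$ to absorb the first factor. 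This is precisely \eqref{eq:CIdecomp}. There is no real obstacle here; the argument is essentially bookkeeping, and the only care needed is to track the ordering convention in $\Phi_{\tbI}$, whose arguments run from the top-left to the bottom-right block while the index $j$ in $I_j$ counts from the innermost subset outward.
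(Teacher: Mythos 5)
Your proposal is correct and follows essentially the same route as the paper: both deduce \eqref{eq:0} by applying Remark~\ref{rmk:subgroupsG} to $\calS(\tbI)$ and then obtain \eqref{eq:CIdecomp} from the block-diagonal description of the elements of $C_{\tbI}$. Your explicit construction of the block-diagonal representative $A = \Phi_{\tbI}(A_0, A_k, \ldots, A_1)$ and verification of conditions (i)--(ii) of Definition~\ref{def:CI} just spells out a step the paper leaves implicit, and your ordering of the $\Phi_{\tbI}$-arguments is consistent with \eqref{eq:PhiI}.
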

\begin{proof}
Recall from Remark~\ref{rmk:subgroupsG} that, if $\calS \subseteq \T$ and $H_\calS$ denotes the subgroup of $S(r,n)$ generated by $\calS$, then the block-diagonal decomposition gives an isomorphism
\[H_\calS \cong S(r,j_1) \times S_{j_2-j_1} \times S_{j_3-j_2} \times \cdots \times S_{j_k- j_{k-1}} \times S_{n-j_k},\]
in which $0 \leq j_1 < \cdots < j_k \leq n-1$ are defined by 
\begin{equation}
    \label{eq:1}
j \in \{j_1, \ldots, j_k \} \; \Leftrightarrow \; s_j \notin \calS.
\end{equation}
The group $H_{\tbI}$ is equal to $H_{\calS(\tbI)}$ for the set $\calS(\tbI) \subseteq \T$ defined by
\begin{equation}
    \label{eq:2}
    s_i \in \calS(\tbI) \; \Leftrightarrow n-i \notin \{|I_1|, \ldots, |I_k|\}.
\end{equation}
Combining \eqref{eq:1} and \eqref{eq:2} shows that the isomorphism \eqref{eq:0} is given by the block-diagonal decomposition.  The elements of $C_{\tbI}$ are precisely the matrices with this block-diagonal decomposition and entry $\zeta^{-\a(i)}$ in column $i$ for all $i \in I_k$, which proves \eqref{eq:CIdecomp}.
\end{proof}

\section{Combinatorics of the permutohedral complex}
\label{sec:permutohedral-complex}

The third setting that is combinatorially related to the boundary stratification of $\L^r_n(\zeta)$ and to the $\T$-coset structure of $S(r,n)$ is the permutohedral complex.  To define it, we set
\[\Y:= \R^{\geq 0} \cdot \mu_r \subseteq \C\]
for any $r \geq 2$, and we set
\[\delta^n_k:=n + (n-1) + \cdots + (n-k+1)\]
for any $n \geq 1$ and $k \in [n]$.

\begin{definition}
Let $r \geq 2$ and $n \geq 0$.  The {\bf $n$-dimensional $r$-permutohedral complex} is defined as
\[\Delta_n^r := \left\{(x_1, \ldots, x_n) \in \Y^n \; \left| \; \sum_{i \in I} |x_i| \leq \delta^n_{|I|} \text{ for all } I \subseteq [n]\right.\right\}\]
if $n \geq 1$, or as a single point if $n=0$.
\end{definition}

Because in general $\Delta^r_n$ is not a polytope (rather, as we will prove in Corollary~\ref{cor:facedim} below, it is a polytopal complex), defining the appropriate notion of ``face" requires some care.  We carry this out in the following definition.

\begin{definition}
\label{def:face}
A {\bf decorated subset} of $[n]$ is $\widetilde{I} = (I, a)$, where $I \subseteq [n]$ and $a: I \rightarrow \Z_r$ is any function; in other words, it is a length-$1$ decorated nested chain.  Any decorated subset has an associated hyperplane
\[H_{\widetilde{I}} := \left\{(x_1, \ldots, x_n) \in \C^n \; \left| \; \sum_{i \in I} \zeta^{a(i)} \cdot x_i = \delta^n_{|I|} \right.\right\} \subseteq \C^n.\]
A {\bf $\Delta$-face} of $\Delta^r_n$ is defined as any nonempty intersection
\[\Delta^r_n \cap H_{\widetilde{I}_1} \cap \cdots \cap H_{\widetilde{I}_k},\]
where $\widetilde{I}_1, \ldots, \widetilde{I}_k$ are a choice of distinct decorated subsets of $[n]$.
\end{definition}

When $r=2$, the complex $\Delta^2_n$ is in fact a polytope in $\R^n$ (in particular, $\Delta^2_2$ is the octagon illustrated in Figure~\ref{fig:Delta22example}, and more generally, $\Delta^2_n$ is known as the type-$B$ permutohedron) and its $\Delta$-faces are precisely its faces in the usual sense.  When $r \ge3$, on the other hand, the $\Delta$-faces of $\Delta^r_n$ are themselves polytopal complexes.  We illustrate this in an example before proving it in general.

\begin{example}
\label{ex:facestructure}
Let $r=3$ and $n=2$.  Given $\widetilde{I} = (I, a)$ with $I = \{1, 2\}$ and $a: I \rightarrow \Z_3$ defined by
\[a(1)=2, \;\;\;a(2)=0,\]
the associated $\Delta$-face is
 \[\Delta^3_2 \cap H_{\widetilde{I}} = \{(x_1, x_2) \in \Delta^3_2 \; | \; \zeta^2 x_1 + \zeta^0 x_2 = 3\}.\]
Geometrically, this is a line segment, and there are nine such $\Delta$-faces of $\Delta^3_2$ given by changing the powers of $\zeta$ in the coefficients of the defining equations; see the nine green line segments labeled in  Figure~\ref{fig:Delta32example}.  On the other hand, given $\widetilde{I'} = (I', a')$ where $I' = \{1\}$ and $a'(1)= 2$, the $\Delta$-face associated to $\widetilde{I'}$ is 
\[\Delta^3_2 \cap H_{\widetilde{I'}} = \{(x_1, x_2) \in \Delta^3_2 \; | \; \zeta^2 x_1 = 2\}.\]
One can check that this is equivalent to
\[\{2\zeta^1\} \times \{x_2 \in \Y \; | \; |x_2| \leq 1\},\]
which is the union of three line segments in a ``$\mathsf{Y}$" shape.  There are three such $\Delta$-faces given by changing the value of $a'(1)$, labeled in red in Figure~\ref{fig:Delta32example}, and there are three similar $\Delta$-faces given by the equations $\zeta^i x_2 = 2$ for $0 \le i \le 2$, labeled in blue in Figure~\ref{fig:Delta32example}. 

In addition, $\Delta^3_2$ has $0$-dimensional $\Delta$-faces given by intersecting two $1$-dimensional $\Delta$-faces; it is straightforward to check that each such nonempty intersection is a single point.  Finally, although Figure~\ref{fig:Delta32example} shows $\Delta^3_2$ as the union of nine pentagons, these together constitute just a single $2$-dimensional face.

\end{example}

This example illustrates that $\Delta^r_n$ is a polytopal complex glued from $n$-dimensional polytopes, and that a $\Delta$-face given by intersecting $\Delta^r_n$ with $k$ distinct hyperplanes is a polytopal complex glued from $(n-k$)-dimensional polytopes.  Before proving these observations in general, it is useful to draw on another key observation: a condition on the decorated subsets $\widetilde{I}_1, \ldots, \widetilde{I}_k$ must be satisfied in order to ensure that the corresponding $\Delta$-face is nonempty.

\begin{lemma}
\label{lem:nonemptyface}
Let $\widetilde{I}_1, \ldots, \widetilde{I}_k$ be 
 decorated subsets of $[n]$, where $\widetilde{I}_j = (I_j, a_j)$ for each $j$.  Then
\[\Delta^r_n \cap H_{\widetilde{I}_1} \cap \cdots \cap H_{\widetilde{I}_k} \neq \emptyset\]
if and only if, after possibly reordering $\widetilde{I}_1, \ldots, \widetilde{I}_k$, the tuple $(I_1, \ldots, I_k, a_k)$ is a decorated nested chain of subsets of $[n]$.
\end{lemma}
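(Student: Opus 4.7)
The plan is to reduce membership in each hyperplane $H_{\widetilde I}$ to two real conditions via the triangle inequality, and then to force nestedness of the $I_j$'s via submodularity of the sizing function $k \mapsto \delta^n_k$.

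First I would establish a pointwise lemma together with a combinatorial identity. For $x \in \Delta^r_n$ and a decorated subset $\widetilde I = (I, a)$, membership $x \in H_{\widetilde I}$ is equivalent to the conjunction of (a) $\sum_{i\in I}|x_i| = \delta^n_{|I|}$ and (b) $x_i = |x_i|\,\zeta^{-a(i)}$ for every $i \in I$. This comes from the squeeze $\delta^n_{|I|} = |\sum_{i\in I}\zeta^{a(i)}x_i| \le \sum_{i\in I}|x_i| \le \delta^n_{|I|}$: equality in the triangle inequality forces every $\zeta^{a(i)} x_i$ to be a non-negative real (giving (b)), and equality in the defining inequality of $\Delta^r_n$ gives (a). A small but crucial observation is that (a) forces $|x_i| > 0$ for every $i \in I$, since otherwise the sum of the remaining $|x_i|$ would still equal $\delta^n_{|I|}$, violating the strictly smaller bound $\delta^n_{|I|-1}$ enforced by $\Delta^r_n$. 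In parallel, a direct expansion of $\delta^n_k = kn - \binom{k}{2}$ yields the submodularity identity
\[
\delta^n_{|I|} + \delta^n_{|I'|} - \delta^n_{|I \cup I'|} - \delta^n_{|I \cap I'|} = |I \setminus I'| \cdot |I' \setminus I| \ge 0,
\]
with equality iff $I$ and $I'$ are comparable under inclusion.

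For the forward direction I would take any $x$ in $\Delta^r_n \cap H_{\widetilde I_1} \cap \cdots \cap H_{\widetilde I_k}$ and apply the pointwise lemma to each $\widetilde I_j$. If $(|x_i|)$ is tight for both $I$ and $I'$, then
\[
\delta^n_{|I|} + \delta^n_{|I'|} = \sum_{i \in I\cup I'}|x_i| + \sum_{i \in I\cap I'}|x_i| \le \delta^n_{|I\cup I'|} + \delta^n_{|I\cap I'|},
\]
which combined with the submodularity identity forces equality throughout, hence $I \subseteq I'$ or $I' \subseteq I$. Applying this pairwise, $I_1, \ldots, I_k$ are totally ordered; reorder so that $I_1 \subseteq \cdots \subseteq I_k$. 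Condition (b) for $\widetilde I_j$ and $\widetilde I_k$ then gives $\zeta^{-a_j(i)} = x_i/|x_i| = \zeta^{-a_k(i)}$ for $i \in I_j$ (well-defined because $|x_i| > 0$ there), so $a_j = a_k|_{I_j}$. Strict inclusion is forced because $I_j = I_{j'}$ would yield $a_j = a_{j'}$, contradicting $\widetilde I_j \ne \widetilde I_{j'}$; thus $(I_1, \ldots, I_k, a_k)$ is a decorated nested chain.

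For the converse I would build an explicit witness. Given a chain $(I_1,\ldots,I_k,a_k)$, pick any bijection $\rho \colon [n] \to [n]$ sending $I_j$ to $\{n-|I_j|+1,\ldots,n\}$ for every $j$ (possible by the nesting: assign the top $|I_1|$ ranks to $I_1$, the next $|I_2|-|I_1|$ to $I_2\setminus I_1$, and so on), and set $x_i := \rho(i)\,\zeta^{-a_k(i)}$ for $i \in I_k$ and $x_i := \rho(i)$ for $i \notin I_k$. Since $(\rho(i))$ is a permutation of $(1,\ldots,n)$, we have $\sum_{i \in J}|x_i| \le \delta^n_{|J|}$ for every $J \subseteq [n]$, so $x \in \Delta^r_n$; and by construction $\sum_{i \in I_j}\zeta^{a_j(i)} x_i = \sum_{i \in I_j}\rho(i) = \delta^n_{|I_j|}$ for each $j$, so $x$ lies on every $H_{\widetilde I_j}$. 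The main technical step is isolating condition (b) from the triangle-inequality collapse and pinning down the sign of the submodularity defect; once those are in hand, both implications reduce to bookkeeping.
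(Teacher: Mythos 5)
Your proof is correct and follows essentially the same route as the paper's: the triangle-inequality squeeze pins down both the moduli and the arguments of the coordinates, a submodularity inequality for $\delta^n_{|I|}$ forces pairwise comparability of the sets, and an explicit point witnesses the converse. The only (cosmetic) differences are that you record the submodularity defect as the exact identity $|I\setminus I'|\cdot|I'\setminus I|$ where the paper just asserts the strict inequality, and your witness is built directly from the given chain rather than from an extension to a maximal one.
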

\begin{proof}
Since $n$ is fixed throughout this proof, we write $\delta^n_k$ as simply $\delta_k$ to avoid cluttering the notation.

Without loss of generality, we assume that the decorated sets
$\widetilde{I}_1,\cdots,\widetilde{I}_k$
are distinct.  To prove the forward direction, it suffices to show that
\[\Delta^r_n \cap H_{\widetilde{I}} \cap H_{\widetilde{J}} \neq \emptyset \; \Longrightarrow \; \widetilde{I} \subseteq \widetilde{J}.\]
Suppose, then, that $x \in \Delta^r_n \cap H_{\widetilde{I}} \cap H_{\widetilde{J}}$, and let $\widetilde{I} = (I,a)$ and $\widetilde{J} = (J,b)$.  The fact that $x \in \Delta^r_n$ means in particular that $x \in \Y^n$, so 
\[x  = (\lambda_1 c_1, \ldots, \lambda_n c_n)\]
for some $\lambda_i \in \R^{\geq 0}$ and $c_i \in \mu_r$. We claim that
\[c_i = \zeta^{-a(i)}\]
for all $i \in I$.

To see this, note that the fact that $x \in H_{\widetilde{I}}$ means that
\begin{equation}
    \label{eq:inHI}
\sum_{i \in I}\lambda_i \cdot \zeta^{a(i)}c_i  = \delta_{|I|},
\end{equation}
and the fact that $x \in \Delta^r_n$ means that
\[\sum_{i \in I} \lambda_i \leq \delta_{|I|}.\]
From the triangle inequality we obtain
\[\delta_{|I|} = \left| \sum_{i \in I}\lambda_i \cdot \zeta^{a(i)}c_i \right| \leq \sum_{i \in I} \Big| \lambda_i \cdot \zeta^{a(i)}c_i  \Big| = \sum_{i \in I} \lambda_i \leq \delta_{|I|}.\]
Thus, the triangle inequality is in fact an equality, which is only possible if the complex numbers $\lambda_i \cdot \zeta^{a(i)}c_i$ are all non-negative real scalar multiples of one another.  Since their sum is a positive real number by \eqref{eq:inHI}, they must each individually be non-negative real numbers; that is,
\begin{equation}
    \label{eq:Rgeq0}
\lambda_i \cdot \zeta^{a(i)}c_i \in \R^{\geq 0}
\end{equation}
for each $i$.  Furthermore, $\lambda_i\neq 0$ for all $i\in I$, since if $\lambda_j=0$ for some $j\in I$, then
\[\sum_{i\in I\setminus\{j\}}\lambda_i=\delta_{|I|}>\delta_{|I|-1},\]
violating one of the inequalities in the definition of $\Delta^r_n$.  Thus, we have $\lambda_i > 0$ for all $i$, so \eqref{eq:Rgeq0} implies $\zeta^{a(i)} c_i \in \R^{\geq 0} \cap \mu_r$.  We conclude that $c(i) = \zeta^{-a(i)}$, as claimed.

This proves that the decorations on $I$ are given by the inverses of the coefficients $c(i)$ on $x \in \Delta^r_n \cap H_{\widetilde{I}} \cap H_{\widetilde{J}}$, and the exact same argument shows that the decorations on $J$ are given by the same formula.  Thus, what remains to be proved is that either $I \subseteq J$ or $J \subseteq I$.  To see this, notice that the fact that $x \in H_{\widetilde{I}} \cap H_{\widetilde{J}}$ can now be expressed as
\[\sum_{i \in I} \lambda_i = \delta_{|I|} \;\;\text{ and }\;\; \sum_{i \in J} \lambda_i = \delta_{|J|}.\]
From these equations, we deduce that 
\[\sum_{i \in I \cup J} \lambda_i = \sum_{i \in I \setminus (I \cap J)} \lambda_i + \sum_{i \in J \setminus (I \cap J)} \lambda_i + \sum_{i \in I \cap J} \lambda_i = \delta_{|I|} + \delta_{|J|} - \sum_{i \in I \cap J} \lambda_i.\]

By the defining inequalities of $\Delta^r_n$, we have
\[\sum_{i \in I \cap J} \lambda_i \leq \delta_{|I \cap J|},\]
so
\[\sum_{i \in I \setminus (I \cap J)} \lambda_i + \sum_{i \in J \setminus (I \cap J)} \lambda_i + \sum_{i \in I \cap J} \lambda_i \geq \delta_{|I|} + \delta_{|J|} - \delta_{|I \cap J|},\]
or in other words,
\begin{equation}
    \label{eq:biginequality}
\sum_{i \in I \cup J} \lambda_i \geq \delta_{|I|} + \delta_{|J|} - \delta_{|I \cap J|}.
\end{equation}
But a straightforward calculation shows that if $|I \cap J|$ is strictly less than both $|I|$ and $|J|$, then
\[\delta_{|I|} + \delta_{|J|} - \delta_{|I\cap J|} > \delta_{|I|+|J|-|I \cap J|}.\]
Given that $\delta_{|I|+|J|-|I \cap J|} = \delta_{|I \cup J|}$, we would then obtain from \eqref{eq:biginequality} that
\[\sum_{i \in I \cup J} \lambda_i > \delta_{|I \cup J|},\]
contradicting one of the defining inequalities of $\Delta^r_n$.  Thus, we must have either $|I \cap J| = |I|$ or $|I \cap J| = |J|$, meaning that either $I \subseteq J$ or $J \subseteq I$.  This concludes the proof of the forward direction of the lemma.

For the reverse direction, it suffices to show the statement when $k=n$ and $\widetilde{I}_1, \ldots, \widetilde{I}_n$ are distinct, because any chain can be extended to a maximal one.  In light of this, let
 \[\tbI = (I_1, \ldots, I_n, \a)\]
be a maximal chain, which can equivalently be expressed as 
\begin{align*}
    I_1 &= \{i_1\}\\
    I_2 &= \{i_1, i_2\}\\
    \vdots\\
    I_n &= \{i_1, i_2, \ldots, i_n\} = [n]
\end{align*}
for some $i_1, \ldots, i_n$.  Setting
 \begin{equation}
     \label{eq:xj}
     x_{i_j} = \zeta^{-\a(i_j)} \cdot (n+1 - j)
 \end{equation}
 for each $j$, it is straightforward to check that $(x_1, \ldots, x_n) \in \Delta^r_n \cap H_{\widetilde{I}_1}\cap \cdots \cap H_{\widetilde{I}_n}$, where $\tilde{I}_j := (I_j, \a|_{I_j})$.  Therefore, $\Delta^r_n\cap H_{\widetilde{I}_1}\cap \cdots \cap H_{\widetilde{I}_n} \neq \emptyset$, so the reverse direction of the lemma is proved.
 \end{proof}

The key upshot of Lemma~\ref{lem:nonemptyface} is that the $\Delta$-faces of $\Delta^r_n$, like the boundary strata and $\T$-cosets, can be indexed by chains.  The notation, analogously to the previous two sections, is as follows.

\begin{definition}
\label{def:FI} Let $\tbI = ( I_1, \ldots, I_k, \a)$ be a chain.  Then the $\Delta$-face of $\Delta^r_n$ associated to $\tbI$ is
\[F_{\tbI} := \Delta^r_n \cap H_{\widetilde{I}_1} \cap \cdots \cap H_{\widetilde{I}_k},\]
where $\widetilde{I}_j = \left(I_j, \a|_{I_j}\right)$ for each $j$.
\end{definition}

In the special case where the chain $\tbI$ is maximal, the $\Delta$-face $F_{\tbI}$ is a single point of $\Delta^r_n$, which we refer to as a {\bf vertex}.  Note that equation~\eqref{eq:xj} makes the coordinates of the vertex associated to a maximal chain $\tbI$ explicit.  More generally, the proof of Lemma~\ref{lem:nonemptyface} gives an explicit description of the elements of $F_{\tbI}$ for any chain $\tbI$, which we collect in the following remark for future reference.

\begin{remark}\label{cor:settheoreticdescriptionofface}
Suppose ${\tbI}=(I_1,\ldots,I_k, \a)$ is a chain. Then $(x_1,\ldots ,x_n)\in \Y^n$ lies in $F_{\tbI}$ if and only if the following conditions are satisfied:
\begin{enumerate}[label={\bf(C\arabic*)}]
    \item $(x_1,\ldots ,x_n)\in \Delta_n^r$, or in other words,
    \[\sum_{i\in I}\abs{x_i}\le\delta_{\abs{I}}^n\]
    for all $I \subseteq [n]$;
    \label{it:isincomplex}
    \item $ x_i \in \R^{\geq 0} \cdot \zeta^{-\a(i)}$ for all $i \in I_k$; \label{it:whichoctant}
    \item for all $j \in \{1, \ldots, k\}$,
    \[\sum_{i\in I_j} \abs{x_i}=\delta^n_{\abs{I_j}}.\]
    \label{it:equalities}
\end{enumerate}
\end{remark}

Let us illustrate the passage from a chain to its associated $\Delta$-face for the same chain considered in Examples~\ref{ex:SI} and \ref{ex:CI} above.

\begin{example}
\label{ex:FI}
Let $r=3$ and $n=4$, and consider again the chain $\tbI = (I_1, I_2, \a)$, where
\[I_1 = \{3\}, \;\; I_2 = \{2,3,4\}\]
and $\a: I_2 \rightarrow \Z_3$ is given by
 \begin{align*}
    \a(2) &= 1, \quad
    \a(3) = 0, \quad
    \a(4) = 2.
\end{align*}
The associated $\Delta$-face $F_{\tbI} \subseteq \Delta^3_4$ is, by definition,
\[F_{\tbI} = \left\{(x_1, \ldots, x_4) \in \Y^4 \; \left| \; \substack{\textstyle \sum_{i \in I} |x_i| \leq \delta^4_{|I|} \; \text{ for all } I \subseteq [4]\\\\ \textstyle \zeta^0 x_3 = 4 \\\\ \textstyle \zeta^1 x_2 + \zeta^0 x_3 + \zeta^2 x_4 = 9}\right.\right\}.\]
It is illuminating to divide the four coordinates according to the decomposition
\begin{align*}
[4] &=  I_1 \cup \Big(I_2 \setminus I_1\Big) \cup \Big([4] \setminus I_2\Big)\\
&=\{3\} \cup \{2,4\} \cup \{1\}.
\end{align*}
For the coordinates in each of these sets, we have the following conditions:
\begin{itemize}
    \item The coordinate $x_3$ must satisfy
    \[x_3 \in \mathbb{R}^{\geq 0}\cdot \zeta^0,\]
    and by the second equality in the above expression for $F_{\tbI}$, we have
    \begin{equation}
        \label{eq:x3}
        |x_3|=4.
    \end{equation}
    Thus, the value of $x_3$ is completely determined.
    \item The coordinates $x_2$ and $x_4$ must satisfy
    \[x_2 \in \mathbb{R}^{\geq 0}\cdot \zeta^2 \text{ and } x_4 \in \mathbb{R}^{\geq 0} \cdot \zeta^1,\]
    and by the second and third equalities in the above expression for $F_{\tbI}$, we have
    \begin{equation}
        \label{eq:x24}
        |x_2| + |x_4| = 5.
    \end{equation}
    Thus, $(x_2, x_4)$ lie in a line segment.
    \item The coordinate $x_1$ can lie in any of the three branches of $\mathsf{Y}$.  Combining equations \eqref{eq:x3} and \eqref{eq:x24} with the first inequality of $F_{\tbI}$, we see that $|x_1| \leq 1$.  Thus,
    \[x_1 \in \Delta^3_1,\]
    which is the $\mathsf{Y}$-shaped region depicted in Figure~\ref{fig:exFI}.
\end{itemize}
Combining these conditions, we find that $F_{\tbI}$ is the product shown in Figure~\ref{fig:exFI}.  
\end{example}

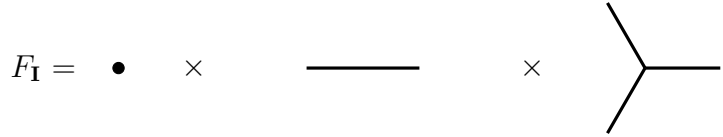
\begin{figure}[h!]
    \centering
    \begin{tikzpicture}
    \node at (-1,0) {$F_{\tbI}$ =};
    
    \filldraw (0,0) circle (0.075);
    
    \node at (1,0) {$\times$};
     
    \draw[very thick] (2.5,0) --  (4,0)  ;
    
    \node at (5.5,0) {$\times$};
    
    \draw[very thick] (7,0) -- (8,0);
    \draw[very thick] (7,0) -- (6.5,0.866);
    \draw[very thick] (7,0) -- (6.5,-0.866);

    \end{tikzpicture}
    \caption{The $\Delta$-face $F_{\tbI} \subseteq \Delta^3_4$ of Example~\ref{ex:FI}, decomposed as a product according to its $x_3$-coordinate, its $(x_2,x_4)$-coordinates, and its $x_1$-coordinate.}
    \label{fig:exFI}
\end{figure}

The product decomposition of $F_{\tbI}$ illustrated in Figure~\ref{fig:exFI} can be interpreted as a product of the standard permutohedra of dimensions zero and one, and the permutohedral complex $\Delta^3_1$.  (We recall the definition of the standard permutohedra in Definition~\ref{def:permutohedron} below.)  To see the permutohedron arising more clearly, it is illuminating to consider one additional example.

\begin{example}
\label{ex:FI2}
    Let $\tbI'$ be the chain obtained by removing $I_1$ from the chain $\tbI$ of Example~\ref{ex:FI}.  Then
    \[F_{\tbI'} = \left\{(x_1, \ldots, x_4) \in \Y^4 \; \left| \; \substack{\textstyle \sum_{i \in I} |x_i| \leq \delta^4_{|I|} \; \text{ for all } I \subseteq [4]\\\\  \textstyle \zeta^1 x_2 + \zeta^0 x_3 + \zeta^2 x_4 = 9}\right.\right\}.\]
As before, $x_1\in\Delta_1^3$ and
is independent from $x_2,x_3,x_4$.  The coordinates $x_2,x_3,x_4$, on the other hand, must satisfy
\[x_2 \in \mathbb{R}^{\geq0} \cdot \zeta^2, \;\; x_3 \in \mathbb{R}^{\geq0} \cdot 1, \;\; x_4 \in \mathbb{R}^{\geq0} \cdot \zeta\]
as well as
\[|x_2|+ |x_3| + |x_4| = 9,\]
One can check that these conditions, together with the inequalities in $F_{\tbI'}$, shows that $x_2, x_3, x_4$ lie in a hexagon with vertices
\[(4\zeta^2, 3, 2\zeta), \; (4\zeta^2, 2, 3\zeta), \; (3\zeta^2, 4, 2\zeta), \; (3\zeta^2, 2, 4\zeta), \; (2\zeta^2, 4, 3\zeta), \; (2\zeta^2, 3, 4\zeta).\]
See Figure~\ref{fig:exFII}.

\begin{figure}[h!]
    \centering
    \begin{tikzpicture}

    \filldraw[fill=gray, opacity=0.5, very thick] (2.15,0) -- (2.85,-1) -- (4.15,-1) -- (4.85,0) -- (4.15,1) -- (2.85,1) -- (2.15,0);
    
    \node at (5.5,0) {$\times$};
    
    \draw[very thick] (7,0) -- (8,0);
    \draw[very thick] (7,0) -- (6.5,0.866);
    \draw[very thick] (7,0) -- (6.5,-0.866);

    \end{tikzpicture}
    \caption{The $\Delta$-face $F_{\tbI'} \subseteq \Delta^3_4$ of Example~\ref{ex:FI2}, decomposed as a product according to its $(x_2,x_3,x_4)$-coordinates and its $x_1$-coordinate.}
    \label{fig:exFII}
\end{figure}
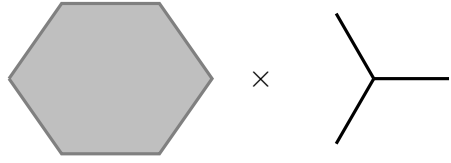
\end{example}

Generalizing the above examples, we now prove that each $\Delta$-face of $\Delta^r_n$ is equal to a product of smaller permutohedral complexes and permutohedra; this will provide the analogue of Propositions \ref{lem:productdecompositionofstratum} and \ref{cor:prodCI} (the product decompositions of boundary strata and $\T$-cosets, respectively), and furthermore, the product decomposition will be used to verify the dimension of each $\Delta$-face.

First, we must recall the definition of the standard permutohedron.

\begin{definition}
\label{def:permutohedron}
The {\bf permutohedron} $\Pi_n$ is the set of points $(x_1, \ldots, x_n) \in \mathbb{R}^n$ satisfying the inequalities
\[\sum_{i \in I} x_i \leq \delta^n_{|I|}\]
for all $I \subsetneq [n]$ and the equality
\begin{equation*}
\sum_{i \in [n]} x_i = \delta^n_n.
\end{equation*}
(Analogously to the situation for Losev--Manin space, we point out again that while the analogy to the permutohedron motivates the definition of $\Delta^r_n$, it is not literally the case that setting $r=1$ in the definition of $\Delta^r_n$ recovers $\Pi_n$.  See Remark~\ref{rmk:r=1}.)
\end{definition}

It is well-known that $\Pi_n$ is a polytope of dimension $n-1$.  More generally, one obtains other polytopes of the same dimension by shifting $\Pi_n$, as follows.

\begin{definition}
Let $\gamma \in \mathbb{R}$. The {\bf $\gamma$-shifted permutohedron} $\Pi_n +\gamma$ is the set of points $(x_1, \ldots, x_n) \in \R^n$ satisfying the inequalities
\[\sum_{i \in I} x_i \leq \delta^n_{|I|} + |I|\cdot\gamma\]
for all $I \subsetneq [n]$ and the equality
\[\sum_{i \in [n]} x_i = \delta^n_n + n\gamma.\]
\end{definition}

Given these definitions, we have the following alternative description of a face $F_{\tbI}$ of $\Delta^r_n$.

\begin{proposition}
\label{prop:faceproduct}
Let $\tbI = (I_1, \ldots, I_k, \a)$ be a chain.  Then $F_{\tbI}$ is equal to the set of points $(x_1, \ldots, x_n) \in \C^n$ that satisfy the following conditions:
\begin{enumerate}[label={\bf($\text{C}\mathbf{\arabic*'}$)}]
    \item $(x_i)_{i \in [n] \setminus I_k} \in \Delta^r_{|[n] \setminus I_k|}$, or in other words, 
    \[\sum_{i \in I} |x_i| \leq \delta_{|I|}^{|[n] \setminus I_k|}\]
    for all $I \subseteq [n] \setminus I_k$;
    \item $x_i \in \R^{\geq 0} \cdot \zeta^{-\a(i)}$ for all $i \in I_k$;
    \item for all $j \in \{1, \ldots, k\}$, the point $(|x_i|)_{i \in I_j \setminus I_{j-1}}$ lies in the shifted permutohedron
    \[\Pi_{|I_j \setminus I_{j-1}|} + \gamma_j\]
    for $\gamma_j := |[n] \setminus I_j|$.
\end{enumerate}
\end{proposition}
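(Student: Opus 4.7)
The plan is to take the set-theoretic description of $F_{\tbI}$ from Remark~\ref{cor:settheoreticdescriptionofface} and show that its three conditions (C1)--(C3) are jointly equivalent to the conditions (C1'), (C2'), (C3') of the proposition. Since (C2) is identical to (C2'), the task reduces to proving that (C1) together with (C3) is equivalent to (C1') together with (C3'). The key arithmetic identity underlying both directions is
\[\delta^n_{a+b} - \delta^n_b = \delta^{n-b}_a,\]
which follows immediately from the closed form $\delta^n_k = nk - \binom{k}{2}$ and which governs how the facet inequalities of $\Delta^r_n$ decompose when one imposes the equalities of (C3).

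For the forward implication, assume (C1) and (C3). For any $I \subseteq [n]\setminus I_k$, apply (C1) to $I\cup I_k$ and subtract the equality (C3) for $j=k$; the identity above yields the defining inequality of $\Delta^r_{|[n]\setminus I_k|}$, proving (C1'). Similarly, for each $1 \le j \le k$ and each $J \subseteq I_j \setminus I_{j-1}$, applying (C1) to $J \cup I_{j-1}$ and subtracting (C3) for $j-1$ produces, via the same identity together with $m_j = |I_j|-|I_{j-1}|$ and $\gamma_j = n-|I_j|$, the defining inequalities of the shifted permutohedron $\Pi_{m_j}+\gamma_j$; the equality when $J = I_j\setminus I_{j-1}$ follows by using (C3) for both $j$ and $j-1$. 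This gives (C3').

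For the converse, assume (C1') and (C3'). Condition (C3) follows by telescoping:
\[\sum_{i\in I_j}|x_i| \;=\; \sum_{\ell=1}^{j}\big(\delta^{m_\ell}_{m_\ell} + m_\ell\gamma_\ell\big) \;=\; \sum_{\ell=1}^{j}\big(\delta^n_{|I_\ell|} - \delta^n_{|I_{\ell-1}|}\big) \;=\; \delta^n_{|I_j|}.\]
For (C1), partition any $I\subseteq[n]$ as $I = I_0'\sqcup I_1'\sqcup\cdots\sqcup I_k'$ with $I_0' := I\cap([n]\setminus I_k)$ and $I_j' := I\cap(I_j\setminus I_{j-1})$; then bound $\sum_{i\in I_0'}|x_i|$ via (C1') and each $\sum_{i\in I_j'}|x_i|$ via the inequalities of $\Pi_{m_j}+\gamma_j$ from (C3'). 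After collecting terms using $\delta^a_b = ab - \binom{b}{2}$, the desired bound $\sum_{i\in I}|x_i|\le\delta^n_{|I|}$ reduces to the purely combinatorial inequality
\[\sum_{0\le j<j'\le k} a_j a_{j'} \;\le\; a_0|I_k| + \sum_{j=1}^{k} a_j|I_{j-1}|,\]
where $a_j := |I_j'|$.

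This final combinatorial inequality is the main technical obstacle: the naive bound $a_{j'}\le m_{j'}$ controls the wrong cross-terms and does not suffice. The plan is to rewrite the inequality as
\[\sum_{j'=1}^{k} a_{j'}\big(A_{j'-1} - |I_{j'-1}|\big) \;\le\; a_0|I_k|,\]
where $A_\ell := a_0+a_1+\cdots+a_\ell$. Since $A_{j'-1} - |I_{j'-1}| = a_0 + \sum_{j=1}^{j'-1}(a_j - m_j) \le a_0$ by $a_j\le m_j$ for $j\ge1$, the left-hand side is at most $a_0\sum_{j'=1}^k a_{j'} \le a_0|I_k|$, which closes the argument.
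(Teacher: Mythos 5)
Your proof is correct and follows essentially the same route as the paper's: both reduce via Remark~\ref{cor:settheoreticdescriptionofface} to showing that ({\bf C1})--({\bf C3}) are equivalent to ({\bf C$\mathbf{1'}$})--({\bf C$\mathbf{3'}$}), using the identity $\delta^n_{a+b}-\delta^n_b=\delta^{n-b}_a$ to convert the facet inequalities, and both arrive at the same residual combinatorial inequality in the converse direction. The only difference is that the paper leaves that final inequality (equivalently, $\sum_j \delta^{n_j}_{a_j}\le\delta^{n}_{|I|}$) as a ``one can check,'' whereas you supply a complete and correct proof of it via the rearrangement $\sum_{j'=1}^{k}a_{j'}\bigl(A_{j'-1}-|I_{j'-1}|\bigr)\le a_0|I_k|$ together with $a_j\le m_j$ --- a worthwhile addition.
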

\begin{proof}
By Remark~\ref{cor:settheoreticdescriptionofface}, it suffices to show that conditions ({\bf C1}) -- ({\bf C3}) are equivalent to conditions ({\bf C$\mathbf{1}'$}) -- ({\bf C$\mathbf{3}'$}).

First, suppose that $(x_1, \ldots, x_n) \in \C^n$ satisfies ({\bf C1}) -- ({\bf C3}).  Then, given $I \subseteq [n] \setminus I_k$, we have
\begin{align*}
    \sum_{i \in I} |x_i| &= \sum_{i \in I \cup I_k} |x_i| - \sum_{i \in I_k} |x_i|\\
    &\leq \delta^n_{|I|+|I_k|} - \sum_{i \in I_k} |x_i|\\
    &= \delta^n_{|I| + |I_k|} - \delta^n_{|I_k|}\\
    &= \delta^{|[n] \setminus I_k|}_{|I|},
\end{align*}
  where the first equality follows from the fact that $I$ and $I_k$ are disjoint, the inequality from ({\bf C1}), and the second equality from ({\bf C3}).  Thus, $(x_1, \ldots, x_n)$ satisfies ({\bf C$\mathbf{1}'$}).

Condition ({\bf C$\mathbf{2}'$}) is identical to condition ({\bf C2}), so there is nothing to check.  Finally, for condition ({\bf C$\mathbf{3}'$}), let $j \in \{1, \ldots, k\}$.  Then
\begin{align*}
    \sum_{i\in I_j\setminus I_{j-1}}\abs{x_i}&=\sum_{i\in I_j}\abs{x_i}-\sum_{i\in I_{j-1}}\abs{x_i}\\
    &=\delta_{\abs{I_j}}^n-\delta_{\abs{I_{j-1}}}^n\\
    &=\delta_{\abs{I_j\setminus I_{j-1}}}^{\abs{[n]\setminus I_{j-1}}}\\
    &=\delta_{\abs{I_j\setminus I_{j-1}}}^{\abs{I_j\setminus I_{j-1}}}+\abs{[n]\setminus I_j}\cdot \abs{I_j\setminus I_{j-1}},
\end{align*}
which is the equality in the definition of the shifted permutohedron $\Pi_{|I_j \setminus I_{j-1}|} + \gamma_j$.  Furthermore, for any $I \subsetneq I_j \setminus I_{j-1}$, we have
\begin{align*}
    \sum_{i\in I} \abs{x_i}&=\sum_{i\in I_{j-1}\cup I} \abs{x_i}-\sum_{i\in I_{j-1}}\abs{x_i}
    \\&\le \delta^n_{\abs{I}+\abs{I_{j-1}}}-\delta^n_{\abs{I_{j-1}}}
    \\&=\delta_{\abs{I}}^{\abs{[n]\setminus I_{j-1}}}
    \\
    &=\delta^{\abs{I_j\setminus I_{j-1}}}_{\abs{I}}+\abs{[n]\setminus I_j}\cdot \abs{I},
\end{align*}
which are the inequalities in the definition of the shifted permutohedron.  Thus, $(x_1, \ldots, x_n)$ satisfies ({\bf C$\mathbf{3}'$}).

For the reverse direction, suppose that $(x_1, \ldots, x_n) \in \C^n$ satisfies ({\bf C$\mathbf{1}'$}) -- ({\bf C$\mathbf{3}'$}). It is automatic that $(x_1,\ldots, x_n)\in \Y^n$ satisfies ({\bf C$\mathbf{2}$}).  For ({\bf C$\mathbf{1}$}), note that for any $I\subseteq [n]$, we have
\begin{align*}
    \sum_{i\in I}\abs{x_i}&=\sum_{i\in I\setminus I_{k}} \abs{x_i}+\sum_{j=1}^k \sum_{i\in I\cap(I_j\setminus I_{j-1})}\abs{x_i}\\
    &\le \delta^{\abs{[n]\setminus I_k}}_{\abs{I\setminus I_k}}+ \sum_{j=1}^k \Big(\delta^{\abs{I_j\setminus I_{j-1}}}_{\abs{I\cap(I_j\setminus I_{j-1})}}+\abs{[n]\setminus I_j}\cdot \abs{I\cap(I_j\setminus I_{j-1})}\Big)
    \\
    &= \delta^{\abs{[n]\setminus I_k}}_{\abs{I\setminus I_k}}+\sum_{j=1}^k \left(\delta^{\abs{[n]\setminus I_{j-1}}}_{\abs{I\cap (I_j\setminus I_{j-1})}}\right),
    \end{align*}  
For notational convenience, we define
   \[n_j := |[n] \setminus I_{j-1}| \;\text{ and }\; a_j := |I \cap (I_j \setminus I_{j-1})|\]
   for each $j \in \{1, \ldots, k+1\}$, where $I_0 = \emptyset$ and $I_{k+1} = [n]$.  In this notation, the above is expressed as
   \[\sum_{i \in I} |x_i| \leq \sum_{j=1}^{k+1} \delta^{n_j}_{a_j}.\]
   From here, using the fact that $n_1 \geq n_2 \geq \cdots \geq n_{k+1}$ and $a_j \leq n_{j+1} - n_j$ for each $j$, one can check that
   \[\sum_{j=1}^{k+1} \delta^{n_j}_{a_j}\le \delta^{n_1}_{a_1 + \cdots + a_{k+1}}\]
   and hence
\[\sum_{i\in I}\abs{x_i} \leq \delta^{n_1}_{a_1 + \cdots + a_{k+1}}=\delta^n_{\abs{I}},\]
    proving that $(x_1,\ldots, x_n)\in \Y^n$ satisfies ({\bf C$\mathbf{1}$}). We also have, for $\ell\in \{1,\ldots, k\}$, that 
    \begin{align*}
        \sum_{i\in I_{\ell}}\abs{x_i}&=\sum_{j=1}^{\ell}\sum_{i\in I_{j}\setminus I_{j-1}}\abs{x_i}\\&=\sum_{j=1}^{\ell}\left(\delta_{\abs{I_j\setminus I_{j-1}}}^{\abs{I_j\setminus I_{j-1}}}+\abs{[n]\setminus I_j}\cdot \abs{I_j\setminus I_{j-1}}\right)\\
        &=\sum_{j=1}^{\ell}\left(\delta_{\abs{I_j}}^n-\delta_{\abs{I_{j-1}}}^n\right)\\
        &=\delta^n_{\abs{I_{\ell}}},
        \end{align*}
    proving that $(x_1,\ldots, x_n)$ satisfies ({\bf C$\mathbf{3}$}) and completing the proof of the proposition.
\end{proof}

As a corollary, we find that the $\Delta$-faces of $\Delta^r_n$ have a product decomposition, which shows that they are polytopal complexes of the expected dimension.

\begin{corollary}
\label{cor:facedim}
For any chain $\tbI$, we have
 \begin{equation}
     \label{eq:faceprod}
F_{\tbI}= \Delta_{\abs{[n]\setminus I_k}}^r \times \varphi_{\a}\left(\prod_{j=1}^k (\Pi_{\abs{I_j\setminus I_{j-1}}} + \gamma_j)\right),
 \end{equation}
 where $\varphi_{\a}: \R^{|I_k|} \rightarrow \C^{|I_k|}$ multiplies the $i$th coordinate by $\zeta^{-\a(i)}$, and $\gamma_j := |[n] \setminus I_j|$.  In particular, $F_{\tbI}$ is a polytopal complex of dimension $n-k$.
\end{corollary}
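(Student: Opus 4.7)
The plan is to deduce the product description \eqref{eq:faceprod} directly from the set-theoretic characterization of $F_{\tbI}$ provided by Proposition~\ref{prop:faceproduct}. The three conditions ({\bf C$\mathbf{1}'$})--({\bf C$\mathbf{3}'$}) constrain disjoint blocks of coordinates of a point $(x_1, \ldots, x_n) \in \C^n$, so the locus satisfying all three automatically factors as a product, and it remains to match each condition to one of the three factors in \eqref{eq:faceprod}.

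Condition ({\bf C$\mathbf{1}'$}) involves only the coordinates $(x_i)_{i \in [n]\setminus I_k}$ and is literally the condition that this subtuple lies in $\Delta_{\abs{[n]\setminus I_k}}^r$, giving the first factor. Condition ({\bf C$\mathbf{2}'$}) forces each remaining coordinate $x_i$ (with $i \in I_k$) onto the single ray $\R^{\geq 0}\cdot \zeta^{-\a(i)}$, so writing $x_i = \abs{x_i}\cdot \zeta^{-\a(i)}$ establishes a bijection between admissible tuples $(x_i)_{i \in I_k}$ and tuples $(\abs{x_i})_{i \in I_k} \in (\R^{\geq 0})^{I_k}$; the inverse of this bijection is precisely the map $\varphi_{\a}$. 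Under this identification, condition ({\bf C$\mathbf{3}'$}) translates into the requirement that $(\abs{x_i})_{i \in I_j \setminus I_{j-1}} \in \Pi_{\abs{I_j \setminus I_{j-1}}} + \gamma_j$ for every $j \in \{1, \ldots, k\}$. Because the sets $I_j \setminus I_{j-1}$ partition $I_k$, these $k$ constraints involve disjoint groups of coordinates and therefore assemble into the product $\prod_{j=1}^k (\Pi_{\abs{I_j\setminus I_{j-1}}} + \gamma_j)$, which, after applying $\varphi_{\a}$, gives the second factor of \eqref{eq:faceprod}.

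For the dimension count, each shifted permutohedron $\Pi_m + \gamma$ is a translate of $\Pi_m$ and hence a polytope of dimension $m-1$, while $\Delta^r_m$ is a polytopal complex of dimension $m$: the ambient $\Y^m$ has local real dimension $m$ along each of its $r^m$ orthants $\R^{\geq 0}\cdot \zeta^{\ell_1} \times \cdots \times \R^{\geq 0}\cdot \zeta^{\ell_m}$, and near the origin of $\Y^m$ all defining inequalities of $\Delta^r_m$ are strictly satisfied, so $\Delta^r_m$ contains a nonempty open subset of $\Y^m$. Adding up, the dimension of the right-hand side of \eqref{eq:faceprod} is
\[\abs{[n]\setminus I_k} + \sum_{j=1}^k \Big(\abs{I_j\setminus I_{j-1}} - 1\Big) \;=\; \Big(n - \abs{I_k}\Big) + \abs{I_k} - k \;=\; n-k,\]
as claimed.

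The only bookkeeping that requires care is verifying that $\varphi_{\a}$ correctly converts the shifted-permutohedron constraints on the $(\abs{x_i})_{i\in I_k}$ back into constraints on the $(x_i)_{i\in I_k}$, and that the constraints for different $j$ in ({\bf C$\mathbf{3}'$}) are genuinely independent. Both points become transparent once one observes that the sets $I_j \setminus I_{j-1}$ partition $I_k$, so no coordinate is involved in more than one shifted-permutohedron constraint and the $\zeta^{-\a(i)}$-twist acts coordinatewise within a single such block.
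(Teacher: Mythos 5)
Your proof is correct and follows essentially the same route as the paper: the product decomposition is read off from Proposition~\ref{prop:faceproduct} by matching the conditions ({\bf C$\mathbf{1}'$})--({\bf C$\mathbf{3}'$}) to the factors, and the dimension count combines $\dim(\Pi_m+\gamma)=m-1$ with the fact that $\Delta^r_m$ is an $m$-dimensional polytopal complex. The only cosmetic difference is that the paper certifies full-dimensionality of each orthant piece $\Delta^r_m\cap\Y^m_{\mathfrak{c}}$ by exhibiting $m+1$ affinely independent points (the origin and the standard basis vectors) rather than by your observation that the defining inequalities are strict near the origin; both are fine.
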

\begin{proof}
The product decomposition \eqref{eq:faceprod} follows immediately from Proposition~\ref{prop:faceproduct}.  For the ``in particular," it suffices to prove that $\Delta^r_n$ itself is a polytopal complex of dimension $n$.  If this is the case, then \eqref{eq:faceprod} implies that $F_{\tbI}$ is a product of polytopal complexes and polytopes, so it is a polytopal complex.  Furthermore, using the fact that $\dim(\Pi_m) = m-1$, we find that the dimension of $F_{\tbI}$ is
\[|[n] \setminus I_k| + \sum_{j=1}^k \Big( |I_j \setminus I_{j-1}| -1 \Big) = n-k,\]
as claimed.

To prove that $\Delta^r_n$ is a polytopal complex of dimension $n$, let $\mathfrak{c}: [n] \rightarrow \Z_r$, and define
\[\Y^n_{\mathfrak{c}} = (\R^{\geq 0} \cdot \zeta^{c(1)}) \times \cdots \times (\R^{\geq 0} \cdot \zeta^{c(n)}) \subseteq \Y^n.\]
Then $\Y^n_{\mathfrak{c}}$ is naturally identified with $\R^n$, and under this identification, we have
\[\Delta^r_n \cap \Y^n_{\mathfrak{c}} = \left\{ (x_1, \ldots, x_n) \in \R^n \; \left| \; \substack{\displaystyle \sum_{i \in I} x_i \leq \delta^n_{|I|} \; \text{ for all }\; I \subseteq [n],\\ \displaystyle x_i \geq 0 \; \text{ for all }\;  i \in [n]} \right.\right\}.\]
In particular, $\Delta^r_n \cap \Y^n_{\mathfrak{c}}$ is a polytope in $\R^n$, and it contains the origin as well as the $n$ standard basis vectors $e_1, \ldots, e_n$.  Since these are $n+1$ affinely independent points in $\R^n$, it follows that $\Delta^r_n \cap \Y^n_{\mathfrak{c}}$ has dimension $n$.  Thus, we have an expression
\[\Delta^r_n = \bigcup_{\mathfrak{c}: [n] \rightarrow \Z_r} (\Delta^r_n \cap \Y^n_{\mathfrak{c}})\]
as a union of $n$-dimensional polytopes intersecting only along the faces where some subset of the coordinates is equal to zero.  That is, $\Delta^r_n$ is a polytopal complex of dimension $n$.
\end{proof}

\begin{remark}
The proof of Corollary~\ref{cor:facedim} shows that the individual $n$-dimensional polytopes $\Delta^r_n \cap \Y^n_{\mathfrak{c}}$ that are glued to form $\Delta^r_n$ are independent of $r$.  Setting $r=2$, we find that $\Delta^2_n$ is, in fact, a polytope (the type-$B$ permutohedron) and the polytopes that comprise any $\Delta^r_n$ are the intersection of $\Delta^2_n$ with an octant.  When $n=2$, for example, $\Delta^2_2$ is an octagon whose intersection with each quadrant is a pentagon, and these pentagons are visible in Figure~\ref{fig:Delta32example} as the building blocks of $\Delta^3_2$.
\end{remark}

\begin{remark}
\label{rmk:octants}
The polytopes $\Delta^r_n \cap \Y^n_{\mathfrak{c}}$ for different choices of $\mathfrak{c}$ are all isomorphic to one another, and they are precisely the polytope of the toric variety $\M^1_n$ that arose in Section~\ref{sec:construction}.  In particular, one can interpret the maps
\[\pi_\alpha|_{U_\alpha}:U_{\alpha} \rightarrow \M^1_n\]
and
\[p: \L^r_n(\zeta) \rightarrow \M^1_n\]
of Section~\ref{subsec:geometry} in polytopal terms: the map $\pi_\alpha|_{U_\alpha}$ amounts to looking locally at a single octant $\Delta^r_n \cap \Y^n_{\mathfrak{c}}$ of $\Delta^r_n$, while the map $p$ amounts to identifying all of the octants of $\Delta^r_n$ with one another.
\end{remark}

We are now ready to prove that---just as in the settings of boundary strata and $\T$-cosets---the $\Delta$-face structure of $\Delta^r_n$ is precisely captured by chains.

\begin{proposition}
\label{prop:ItoFI}
Let $r \geq 2$ and $n\geq 0$.  The association
\[\tbI \mapsto F_{\tbI}\]
is a bijection from the set of decorated nested chains of subsets of $[n]$ to the set of $\Delta$-faces of $\Delta^r_n$.  Furthermore, this bijection satisfies
\begin{enumerate}[label=(\roman*)]
    \item $\text{length}(\tbI) = \text{codim}(F_{\tbI})$,
    \item $F_{\tbI} \subseteq F_{\tbJ}$ if and only if $\tbI$ refines $\tbJ$.
\end{enumerate}
\end{proposition}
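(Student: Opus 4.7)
The plan is to establish part (i) first, use it to prove part (ii), and then derive bijectivity as a consequence. Part (i) is immediate from Corollary~\ref{cor:facedim}, which already gives $\dim F_{\tbI} = n - k$ where $k = \text{length}(\tbI)$, and hence $\text{codim}(F_{\tbI}) = k$.

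For the forward direction of (ii), suppose $\tbI$ refines $\tbJ$, so $\{J_1, \ldots, J_\ell\} \subseteq \{I_1, \ldots, I_k\}$ and $\b = \a|_{J_\ell}$, which forces $\b|_{J_i} = \a|_{J_i}$ for every $i$. Each defining hyperplane $H_{\widetilde{J}_i}$ of $F_{\tbJ}$ then coincides with one of the hyperplanes $H_{\widetilde{I}_{j}}$ defining $F_{\tbI}$, so $F_{\tbI} \subseteq F_{\tbJ}$ by construction.

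The main work lies in the converse of (ii). Assume $F_{\tbI} \subseteq F_{\tbJ}$. The intersection
\[\Delta^r_n \cap H_{\widetilde{I}_1} \cap \cdots \cap H_{\widetilde{I}_k} \cap H_{\widetilde{J}_1} \cap \cdots \cap H_{\widetilde{J}_\ell}\]
equals $F_{\tbI}$ and is therefore nonempty. Applying Lemma~\ref{lem:nonemptyface} to this combined collection of decorated subsets, after removing duplicates and reordering we obtain a decorated nested chain $\tbK$ with $F_{\tbK}$ equal to the same intersection, so $F_{\tbK} = F_{\tbI}$. Invoking part (i), $\text{length}(\tbK) = k$. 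Because $\tbK$ already contains the $k$ distinct strictly nested sets $I_1 \subsetneq \cdots \subsetneq I_k$, the length constraint prevents any $J_i$ from contributing a new subset, forcing $\{J_1, \ldots, J_\ell\} \subseteq \{I_1, \ldots, I_k\}$. Since $\tbK$ carries a single coherent decoration on its largest set $I_k$, which must simultaneously restrict to $\a$ on $I_k$ and to $\b$ on $J_\ell \subseteq I_k$, we conclude $\b = \a|_{J_\ell}$, so $\tbI$ refines $\tbJ$.

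Bijectivity is then a formal consequence. Surjectivity follows by combining Definition~\ref{def:face} with Lemma~\ref{lem:nonemptyface}: every $\Delta$-face is by definition a nonempty intersection of hyperplanes with $\Delta^r_n$, and the lemma extracts a chain $\tbI$ realizing it as $F_{\tbI}$. Injectivity follows immediately from (ii), since $F_{\tbI} = F_{\tbJ}$ forces mutual refinement, which is only possible when $\tbI = \tbJ$. The main obstacle is the converse of (ii), where the entire argument hinges on converting a set-theoretic inclusion into combinatorial refinement via Lemma~\ref{lem:nonemptyface}, together with the dimension formula from part (i) to rule out the introduction of additional subsets in the combined chain.
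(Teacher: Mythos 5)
Your proof is correct, and for the crucial converse direction of (ii) it takes a genuinely different route from the paper. The paper completes $\tbI$ to a maximal chain to produce explicit vertices, reruns the triangle-inequality argument to match decorations, disposes of the edge case $J_\ell = [n] \neq I_k$ separately, and then reduces the set-containment question to the classical face lattice of the permutohedron $\Pi_n$, citing external results of Postnikov and Aguiar--Ardila. You instead observe that $F_{\tbI} \subseteq F_{\tbJ}$ makes the intersection of $\Delta^r_n$ with \emph{all} of the hyperplanes $H_{\widetilde{I}_1}, \ldots, H_{\widetilde{I}_k}, H_{\widetilde{J}_1}, \ldots, H_{\widetilde{J}_\ell}$ equal to $F_{\tbI}$ and hence nonempty, so Lemma~\ref{lem:nonemptyface} assembles the combined collection into a single chain $\mathbf{K}$ with $F_{\mathbf{K}} = F_{\tbI}$; the dimension formula of Corollary~\ref{cor:facedim} then forces $\operatorname{length}(\mathbf{K}) = k$, so no $J_i$ can contribute a set outside $\{I_1, \ldots, I_k\}$, and decoration consistency gives $\b = \a|_{J_\ell}$. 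This is shorter, entirely internal to the paper, and makes transparent that a dimension count is what rules out extra hyperplanes; the paper's version, by contrast, is more constructive in that it exhibits explicit points witnessing non-containment. Two small points worth noting: you use slightly more than the literal statement of Lemma~\ref{lem:nonemptyface}, namely that the decorations of \emph{all} members of the resulting chain are restrictions of the decoration on the largest set --- this is established in the lemma's proof, and the paper's own surjectivity argument relies on it too; and the inference $F_{\mathbf{K}} = F_{\tbI} \Rightarrow \operatorname{length}(\mathbf{K}) = \operatorname{length}(\tbI)$ is legitimate precisely because codimension is read off from the intrinsic dimension of the polytopal complex via Corollary~\ref{cor:facedim}, not from the number of defining hyperplanes. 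The remaining parts of your argument (surjectivity, injectivity from mutual refinement, part (i), and the easy direction of (ii)) coincide with the paper's.
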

\begin{proof}
The surjectivity of $\tbI \mapsto F_{\tbI}$ is is the content of Lemma~\ref{lem:nonemptyface}.  The injectivity will follow from item (ii) of the proposition, since the only way that $\tbI$ is a refinement of $\tbJ$ and vice versa is if $\tbI = \tbJ$.  Item (i) is immediate from Corollary~\ref{cor:facedim}.

Thus, all that remains is to prove item (ii).  One direction is clear: if $\tbI$ is a refinement of $\tbJ$, then the hyperplanes intersected to form $F_{\tbJ}$ are a subset of the hyperplanes intersected to form $F_{\tbI}$, so $F_{\tbI} \subseteq F_{\tbJ}$.

Conversely, suppose that $F_{\tbI} \subseteq F_{\tbJ}$, where
\[\tbI = ( I_1, \ldots, I_k, \a),\]
\[\tbJ = (J_1, \ldots, J_\ell, \b).\]
By completing $\tbI$ to a maximal chain (which, in particular, involves extending $\a$ to a function $\a: [n] \rightarrow \Z_r$), one can find a vertex
\[(x_1, \ldots, x_n) \in F_{\tbI}.\]
Equation~\eqref{eq:xj} implies that, for each $i \in \{1, \ldots, n\}$, we have
\[x_i = \zeta^{-\a(i)} \lambda_i\]
for some $\lambda_i \in [n]$.  The fact that $v \in F_{\tbI} \subseteq F_{\tbJ}$ then implies that
\[\sum_{i \in J_j} \zeta^{b(i)} \cdot \zeta^{-\a(i)} \lambda_i = \delta_{|J_j|}\]
for each $j$, and from here, the same triangle inequality argument from Lemma~\ref{lem:nonemptyface} shows that $\a(i) = \b(i)$ for all $i \in J_j$.  Thus, the decorations on $\tbJ$ agree with the decorations on $\tbI$ where both are defined, and what remains is to prove that
\begin{equation}
    \label{eq:JsubsetI}
\{J_1, \ldots, J_{\ell}\} \subseteq \{I_1, \ldots, I_k\}.
\end{equation}

If not, then one possibility is that $J_{\ell} = [n]$ whereas $I_k \neq [n]$.  In this case, however, if $i \in [n] \setminus I_k$, then for any $s \in \Z_r$ one can construct a vertex $(x_1, \ldots, x_n) \in F_{\tbI}$ with $x_i = \zeta^s \lambda_i$.  By contrast, any vertex of $F_{\tbJ}$ has $x_i = \zeta^{-b(i)} \lambda_i$, so it cannot be the case that $F_{\tbI} \subseteq F_{\tbJ}$.

Having ruled out this possibility, the failure of \eqref{eq:JsubsetI} implies that
\begin{equation}
    \label{eq:JnotinI}
\{J_1, \ldots, J_\ell, [n]\} \not\subseteq \{I_1, \ldots, I_k, [n]\},
\end{equation}
and from here, we can cite the known face structure of the permutohedron $\Pi_n$ (see, for example, \cite[Proposition 2.6]{Postnikov} or \cite[Section 4.1]{AguiarArdila}).  In particular, faces of $\Pi_n$ are indexed by subsets of $[n]$ in which the largest is $[n]$ itself, and if we set
\[F_{J_1, \ldots, J_{\ell}, [n]} := \left\{(x_1, \ldots, x_n) \in \Pi_n \; \left| \; \sum_{i \in J_j} x_i = \delta_{|J_j|} \; \text{ for all } j\right.\right\}\]
and similarly
\[F_{I_1, \ldots, I_{k}, [n]} := \left\{(x_1, \ldots, x_n) \in \Pi_n \; \left| \; \sum_{i \in I_j} x_i = \delta_{|I_j|} \; \text{ for all } j\right.\right\},\]
then it is known that \eqref{eq:JnotinI} implies
\[F_{J_1, \ldots, J_{\ell}, [n]} \not\subseteq F_{I_1, \ldots, I_{k}, [n]}.\]
That is, there exists $(x_1, \ldots, x_n) \in F_{J_1, \ldots, J_{\ell}, [n]}$ with $(x_1, \ldots, x_n) \notin F_{I_1, \ldots, I_{k}, [n]}$.  It is straightforward to see that, for any extension of $a$ to a function $[n] \rightarrow \Z_r$, we have
\[(\zeta^{-a(1)}x_1, \ldots, \zeta^{-a(n)} x_n) \in F_{\tbJ}\]
but
\[(\zeta^{-a(1)}x_1, \ldots, \zeta^{-a(n)} x_n) \notin F_{\tbI}.\]
This contradicts our assumption that $F_{\tbI} \subseteq F_{\tbJ}$ and thus completes the proof.
\end{proof}

The previous two sections concluded with a product decomposition of the relevant objects, and we close this section by briefly noting that the analogous product decomposition also holds for $\Delta$-faces.

\begin{remark}
\label{rem:productdecompositionofface} For any chain $\tbI = (I_1, \ldots, I_k, \a)$, there is an isomorphism
\[F_{\tbI} \cong \Delta^r_{|[n] \setminus I_k|} \times \prod_{j=1}^k \Pi_{|I_j \setminus I_{j-1}|}.\]
(The word ``isomorphism" here can be taken to mean ``combinatorial equivalence," or more strongly, ``isometry" under the standard inner products on $\R^n$ and $\C^n$.)  This follows directly from Corollary~\ref{cor:facedim}.
\end{remark}

\section{Proof Theorem~\ref{thm:main} and Enhancements}
\label{sec:conclusions}

We are now positioned to complete the proof of the main theorem:

\main*
\begin{proof}
Propositions~\ref{prop:ItoSI}, \ref{prop:ItoCI}, and \ref{prop:ItoFI} give bijections between each of these three sets and the set of decorated nested chains of subsets of $[n]$, and in each case, the dimension is encoded by the co-length of a chain and the inclusion relation is encoded by refinement of chains.  
\end{proof}

In fact, the statement of Theorem~\ref{thm:main} can be enhanced to incorporate two pieces of additional structure: product decompositions of the three types of objects and an action of $S(r,n)$ on each.  The remainder of this last section of the paper is devoted to carrying out these enhancements.

\subsection{Product decompositions}\label{subsec:productdecompositionsarecompatible} 

Theorem~\ref{thm:main} is an analogue of results pertaining to the Losev--Manin moduli spaces $\L_n$ studied in \cite{losev2000}.  There, the relevant group is the symmetric group $S_n$ (in which there is a precisely analogous definition of $\T$-cosets with $\T$ the set of adjacent transpositions), and the relevant polytopal complex is the permutohedron $\Pi_n$ (which, in this case, is actually a polytope).  The theorem, then, is that there are dimension-preserving and inclusion-preserving bijections
\begin{equation}
    \label{eq:r=1}
\left\{ \substack{\textstyle\text{boundary }\\ \textstyle\text{strata in } \L_n}\right\} \longleftrightarrow \left\{ \substack{\textstyle \T\text{-cosets}\\ \textstyle\text{ in } S_n}\right\} \longleftrightarrow \left\{ \substack{\textstyle \text{faces}\\ \textstyle\text{ of }\Pi_n}\right\}.
\end{equation}

\begin{remark}
\label{rmk:r=1}The analogy between \eqref{eq:r=1} and Theorem~\ref{thm:main}, and the fact that $S_n$ is the $r=1$ case of $S(r,n)$, suggests that $\L_n$ and $\Pi_n$ should be viewed ``morally" as the $r=1$ cases of $\L_n^r$ and $\Delta_n^r$, respectively, despite the fact that these objects are not literally recovered by setting $r=1$ in the higher-$r$ construction.
\end{remark}

With this analogy established, we note that the bijections of Theorem \ref{thm:main} preserve rich geometric structures of $\L_n^r$ in a way that incorporates the corresponding structures in Losev--Manin spaces encoded by \eqref{eq:r=1}.  More precisely, we have seen in Proposition~\ref{lem:productdecompositionofstratum}, Proposition~\ref{cor:prodCI}, and Remark~\ref{rem:productdecompositionofface} that
\begin{itemize}
    \item a boundary stratum in $\L_n^r$ is isomorphic to a product with one factor $\L_{n'}^r$ for some $n'$ and all other factors  Losev--Manin spaces;
    \item a $\T$-coset $C_{\tbI}$ in $S(r,n)$ is a coset of a subgroup isomorphic to a product with one factor $S(r,n')$ for some $n'$ and the other factors symmetric groups;
    \item a $\Delta$-face $F_{\tbI}$ in $\Delta^r_n$ is isomorphic to a product with one factor $\Delta^r_{n'}$ for some $n'$ and all other factors permutohedra.
\end{itemize}
The following theorem says that the bijections of Theorem \ref{thm:main} are compatible with these product decompositions. 

\begin{theorem}
\label{thm:proddecomps}
Under the bijections of Theorem \ref{thm:main}, a boundary stratum $S_{\tbI}$ naturally isomorphic (via Proposition~\ref{lem:productdecompositionofstratum}) with 
\begin{align*}
\L_{n_{k+1}}^r\times\prod_{j=1}^k\L_{n_j}    \end{align*} 
corresponds to a $\mathcal{T}$-coset $C_{\tbI}$ naturally identified (via Proposition~\ref{cor:prodCI}) with
\begin{align*}\label{eq:cosetdecomposition1}
     S(r,n_{k+1})\times \prod_{j=1}^k  S_{n_j} \cdot A_j,
\end{align*}
 and to a $\Delta$-face $F_{\tbI}$ naturally isomorphic (via Remark~\ref{rem:productdecompositionofface}) with 
 \begin{align*}
 \Delta^r_{n_{k+1}}\times \prod_{j=1}^{k}\Pi_{n_j}.   
 \end{align*}
In particular, if $\tbI = (I_1, \ldots, I_k, \a)$, then
\[n_{k+1} = |[n] \setminus I_k| \text{ and } n_j = |I_j \setminus I_{j-1}|\]
for all $j \in \{1, \ldots, k\}$.
\end{theorem}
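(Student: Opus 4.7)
The plan is essentially a bookkeeping exercise that consolidates the three product decompositions already established in the preceding sections. The bijections of Theorem~\ref{thm:main} all factor through the set of decorated nested chains: to each chain $\tbI = (I_1, \ldots, I_k, \a)$ is attached a boundary stratum $S_{\tbI}$, a $\T$-coset $C_{\tbI}$, and a $\Delta$-face $F_{\tbI}$. Thus the question is not whether $S_{\tbI}$, $C_{\tbI}$, and $F_{\tbI}$ correspond to one another (they do, by Theorem~\ref{thm:main}), but rather that the product decompositions of these three objects are indexed by the same sequence of integers read off from $\tbI$.

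The first step is to write down the three decompositions in parallel and extract the relevant invariants. Proposition~\ref{lem:productdecompositionofstratum} gives
\[S_{\tbI}\cong \L^r_{\abs{[n]\setminus I_k}}\times \prod_{j=1}^k \L_{\abs{I_j\setminus I_{j-1}}};\]
Proposition~\ref{cor:prodCI} gives
\[C_{\tbI} = \Phi_{\tbI}\!\left( S(r,\abs{[n] \setminus I_k}) \times \prod_{j=1}^{k} S_{\abs{I_j \setminus I_{j-1}}} \cdot A_j\right);\]
and Remark~\ref{rem:productdecompositionofface} (an immediate consequence of Corollary~\ref{cor:facedim}) gives
\[F_{\tbI} \cong \Delta^r_{\abs{[n] \setminus I_k}} \times \prod_{j=1}^k \Pi_{\abs{I_j \setminus I_{j-1}}}.\]
Setting $n_{k+1} := \abs{[n]\setminus I_k}$ and $n_j := \abs{I_j\setminus I_{j-1}}$ for $1 \le j \le k$, each decomposition has precisely the shape asserted in the theorem statement.

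The second step is to match factors across the three decompositions. In each case the first (``central'') factor---indexed by $[n] \setminus I_k$---takes values in the ``$r$-family'' $\L^r_{n_{k+1}}$, $S(r,n_{k+1})$, $\Delta^r_{n_{k+1}}$. In each case the remaining $k$ factors are indexed by the successive differences $I_j \setminus I_{j-1}$ and take values in the ``symmetric family'' $\L_{n_j}$, $S_{n_j}\cdot A_j$, $\Pi_{n_j}$. The correspondence is therefore forced: the central factor on the stratum side matches the central factor on the coset side and the central factor on the face side, and likewise for each spoke factor $j = 1, \ldots, k$. This is a direct consequence of the fact that all three constructions package the same nested data.

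There is no real obstacle, since the nontrivial content lies in the three prior results that supplied the decompositions. The only thing worth double-checking is that the indexing of the spoke factors (by $j = 1, \ldots, k$) is compatible across the three sides---that is, that the $j$th spoke factor really corresponds to $I_j \setminus I_{j-1}$ in each case. This is immediate from the proofs: on the stratum side the $j$th spoke carries the marked orbits with indices in $I_j \setminus I_{j-1}$ by Definition~\ref{def:SI}(1); on the coset side the $j$th symmetric-group block of $H_{\tbI}$ permutes rows whose nonzero entries lie in columns $I_j \setminus I_{j-1}$ by the block description following Definition~\ref{def:CI}; and on the face side the $j$th shifted permutohedron governs the coordinates $(x_i)_{i \in I_j \setminus I_{j-1}}$ by Proposition~\ref{prop:faceproduct}(C$3'$). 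Assembling these observations completes the proof.
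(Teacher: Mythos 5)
Your proposal is correct and matches the paper's own proof, which simply cites Proposition~\ref{lem:productdecompositionofstratum}, Proposition~\ref{cor:prodCI}, and Remark~\ref{rem:productdecompositionofface} and observes that all three decompositions are indexed by the same data $|[n]\setminus I_k|$ and $|I_j\setminus I_{j-1}|$ read off from the chain $\tbI$. Your extra check that the $j$th factor corresponds consistently across the three sides is a harmless elaboration of the same argument.
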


\begin{proof}
This follows from Proposition \ref{lem:productdecompositionofstratum}, Proposition \ref{cor:prodCI}, and Remark \ref{rem:productdecompositionofface}, which describe the product decomposition of a boundary stratum, $\mathcal{T}$-coset, or $\Delta$-face corresponding to a given chain $\tbI$.
\end{proof}

\subsection{Actions and equivariance}\label{subsec:groupactionamdequivariance} Another key feature of each of the three settings of interest is the existence of a right action by $S(r,n)$.  In particular:
\begin{itemize}
    \item $S(r,n)$ acts on $\L^r_n(\zeta)$, because an element of $\L^r_n(\zeta)$ is determined by the choice of the curve $C$ and the first element $z_1^0, \ldots, z_n^0$ in each light orbit.  Thus, for any $A \in S(r,n)$ we can view the matrix-vector product
    \[(z_1^0, \ldots, z_n^0) \cdot A \]
    as a new tuple of elements of $C$ by identifying $\zeta^k \cdot z_j^i$ with $\sigma^k(z_j^i)$, so setting
    \[(C; z_1^0, \ldots, z_n^0) \cdot A := (C; (z_1^0, \ldots, z_n^0) \cdot A )\]
    gives an action of $S(r,n)$ on $\L^r_n(\zeta)$.
    \item $S(r,n)$ acts on itself by multiplication on the right. 
    \item $S(r,n)$ acts on $\Delta^r_n$ by matrix-vector multiplication
    \[(x_1, \ldots, x_n) \cdot A\]
    for $(x_1, \ldots, x_n) \in \Delta^r_n$.
\end{itemize}

We will see in Lemma~\ref{lem:0dim} that the action on $\L^r_n(\zeta)$ takes $0$-dimensional boundary strata to $0$-dimensional boundary strata, and the action on $\Delta^r_n$ takes $0$-dimensional $\Delta$-faces to $0$-dimensional $\Delta$-faces.  Moreover, the bijections of Theorem~\ref{thm:main} are equivariant under these actions.  Before proving this in general, let us illustrate it in two examples.

\begin{example}
\label{ex:action}
    In the case of $(r, n) = (2, 2)$, let
    \[
    A = \begin{pmatrix}
    0 & 1 \\
    -1 & 0 
    \end{pmatrix}. 
    \]
    Then the action of $A$ on $\Delta^2_2$ rotates by $\pi/4$ counterclockwise.  On $S(2,2)$, the action is simply right-multiplication, whereas on $\L^2_2$, the action is described by
    \[
    (z_1^0 , \; z_2^{0}) \cdot A = (z_2^{1}, \; z_1^{0}); 
    \]
    in other words, if $(C; z_1^0, z_2^0)$ specifies an element of $\L^2_2$, then
    \[(C; z_1^0, z_2^0) \cdot A = (C; \tilde{z}_1^0, \tilde{z}_2^0)\]
    in which $\tilde{z}_1^0 = z_2^1$ and $\tilde{z}_2^0 = z_1^0$.  From here, consulting Figure~\ref{fig:Delta22example-boundary} shows that the bijection between $0$-dimensional boundary strata in $\L_2^2$ and vertices of $\Delta^2_2$ is $S(2,2)$-equivariant: if $(C; z_1^0, z_2^0)$ is a $0$-dimensional boundary stratum corresponding to a vertex $v \in \Delta^2_2$, then $(C; z_0^1, z_0^2)\cdot A$ is the $0$-dimensional boundary stratum corresponding to the vertex $v \cdot A$.  Similarly, from Figure~\ref{fig:Delta22example-cosets}, one sees that if $B \in S(2,2)$ is a group element (that is, a $0$-dimensional $\T$-coset) corresponding to a vertex $v \in \Delta^2_2$, then $B \cdot A$ is the group element corresponding to $v \cdot A$.
\end{example}

\begin{example} 
    In the case of $(r, n) = (3, 4)$, let
    \begin{equation}
        \label{eq:A}
    A= \begin{pmatrix}
    0 & \zeta^2 & 0 & 0 \\
    0 & 0 & 0 & \zeta \\
    0 & 0 & \zeta^2 & 0 \\
    1 & 0 & 0 & 0
    \end{pmatrix}. 
    \end{equation}
    Then we have
    \[
    (z_1^0, z_2^0, z_3^0, z_4^0) \begin{pmatrix}
    0 & \zeta^2 & 0 & 0 \\
    0 & 0 & 0 & \zeta \\
    0 & 0 & \zeta^2 & 0 \\
    1 & 0 & 0 & 0 
    \end{pmatrix} = (z_4^0, z_1^{2},z_3^2, z_2^1), 
    \] which means that after the action of $A$, the first elements of the light orbits are located at the points where $z_4^{0}, z_1^{2}, z_3^{2}, z_2^{1}$ were located before the action of $A$. For instance, the action of $A$ sends the element of $\L^3_4(\zeta)$ in Figure~\ref{fig:r3n4-zero-dim-action-before} to the element in Figure \ref{fig:r3n4-zero-dim-action-after}. 
    
    \begin{figure}[h!]
    \centering 
    \begin{subfigure}{7cm}
    \centering
    \begin{tikzpicture}[scale=0.6]
    \draw (0,0) circle (0.5cm);
    \node at (0,0) {$x^{\pm}$};
    \draw (1,0) circle (0.5cm);
    \node at (1,0) {$z_1^0$};
    \draw (2,0) circle (0.5cm);
    \node at (2,0) {$z_2^0$};
    \draw (3,0) circle (0.5cm);
    \node at (3,0) {$z_3^0$};
    \draw (4,0) circle (0.5cm);
    \node at (4,0) {$z_4^0$};
    \filldraw (4.5,0) circle (0.075cm) node[right]{$y^0$};
    
    \draw (-0.5,0.866) circle (0.5cm);
    \node at (-0.5,0.866) {$z_1^1$};
    \draw (-1, 1.732) circle (0.5cm);
    \draw (-1.5,2.598) circle (0.5cm);
    \node at (-1, 1.732) {$z_{2}^1$};
    \node at (-1.5,2.598) {$z_{3}^1$};
    \draw (-2,3.464) circle (0.5cm);
    \node at (-2,3.464) {$z_4^1$};
    \filldraw (-2.25,3.897) circle (0.075cm) node[above]{$y^1$};
    
    \draw (-0.5,-0.866) circle (0.5cm);
    \node at (-0.5,-0.866) {$z_1^2$};
    \node at (-1, -1.732) {$z^{2}_2$};
    \draw (-1, -1.732) circle (0.5cm);
    \draw (-1.5,-2.598) circle (0.5cm);
    \node at (-1.5,-2.598) {$z_3^2$};
    \draw (-2,-3.464) circle (0.5cm);
    \node at (-2,-3.464) {$z_4^2$};
    \filldraw (-2.25,-3.897) circle (0.075cm) node[below]{$y^2$};
    \end{tikzpicture}
    \caption{An element $C$ in $\overline{\calL}^{3}_4(\zeta)$. }
    \label{fig:r3n4-zero-dim-action-before}
    \end{subfigure} \qquad 
    \begin{subfigure}{7cm}
    \centering
    \begin{tikzpicture}[scale=0.6]
    \draw (0,0) circle (0.5cm);
    \node at (0,0) {$x^{\pm}$};
    \draw (1,0) circle (0.5cm);
    \node at (1,0) {$z_2^1$};
    \draw (2,0) circle (0.5cm);
    \node at (2,0) {$z_4^2$};
    \draw (3,0) circle (0.5cm);
    \node at (3,0) {$z_3^1$};
    \draw (4,0) circle (0.5cm);
    \node at (4,0) {$z_1^0$};
    \filldraw (4.5,0) circle (0.075cm) node[right]{$y^0$};
    
    \draw (-0.5,0.866) circle (0.5cm);
    \node at (-0.5,0.866) {$z_2^2$};
    \draw (-1, 1.732) circle (0.5cm);
    \draw (-1.5,2.598) circle (0.5cm);
    \node at (-1, 1.732) {$z_4^0$};
    \node at (-1.5,2.598) {$z_{3}^2$};
    \draw (-2,3.464) circle (0.5cm);
    \node at (-2,3.464) {$z_1^1$};
    \filldraw (-2.25,3.897) circle (0.075cm) node[above]{$y^1$};
    
    \draw (-0.5,-0.866) circle (0.5cm);
    \node at (-0.5,-0.866) {$z_2^0$};
    \node at (-1, -1.732) {$z^4_1$};
    \draw (-1, -1.732) circle (0.5cm);
    \draw (-1.5,-2.598) circle (0.5cm);
    \node at (-1.5,-2.598) {$z_3^0$};
    \draw (-2,-3.464) circle (0.5cm);
    \node at (-2,-3.464) {$z_1^2$};
    \filldraw (-2.25,-3.897) circle (0.075cm) node[below]{$y^2$};
    \end{tikzpicture}
    \caption{The element $C \cdot A$ in $\overline{\calL}^{3}_4(\zeta)$. }
    \label{fig:r3n4-zero-dim-action-after}
    \end{subfigure}
    \caption{The action of $A \in S(3, 4)$ defined by \eqref{eq:A} on an element of $\overline{\calL}^{3}_4(\zeta)$.}
    \end{figure}
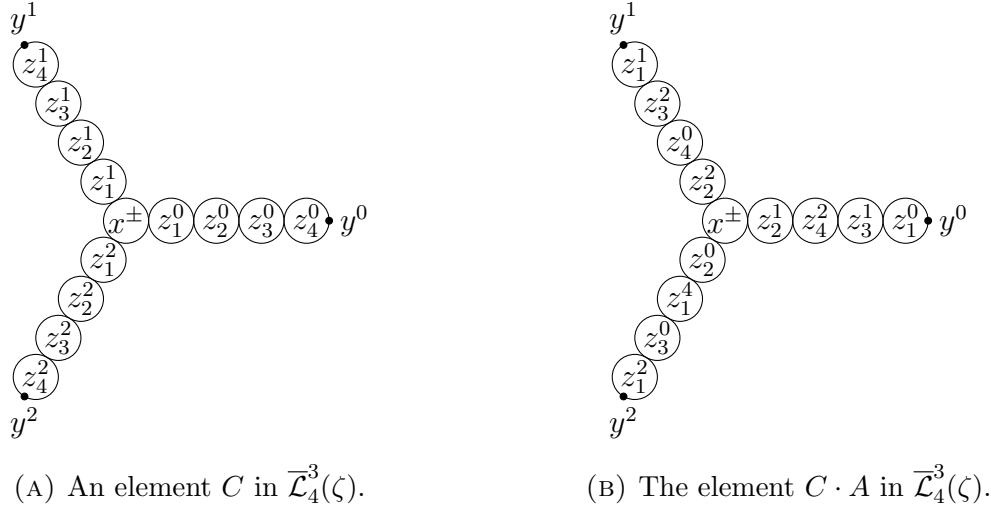
    
Unpacking the bijections of Theorem~\ref{thm:main}, one sees that the curve in Figure~\ref{fig:r3n4-zero-dim-action-before} is the $0$-dimensional boundary stratum corresponding to the $\T$-coset $\{I\}$ containing only the identity matrix, and corresponding to the vertex $(1,2,3,4) \in \Delta^3_2$.  On the other hand, the curve in Figure~\ref{fig:r3n4-zero-dim-action-after} is the $0$-dimensional boundary stratum corresponding to the chain
    \[\tbI = (\{1\}, \{1,3\}, \{1,3,4\}, \{1,2,3,4\}; \a),\]
where
\[\a(1) = 0, \; \a(2) = 1, \; \a(3) = 1, \; \a(4) = 2,\]
from which it is straightforward to check that it corresponds to the $\T$-coset $\{A\} = \{A \cdot I\}$ and to the vertex $(4,\zeta^2, 3\zeta^2, 2\zeta) = (1,2,3,4) \cdot A$ of $\Delta^3_2$.  Thus, in this case, we again see that the bijections of Theorem~\ref{thm:main} are $S(2,3)$-equivariant.
\end{example}

To confirm that the bijections between $0$-dimensional objects are $S(r,n)$-equivariant in general, we denote by $S_0 \in \L^r_n(\zeta)$ the zero-dimensional boundary stratum corresponding under Theorem~\ref{thm:main} to the $0$-dimensional $\T$-coset $\{I\} \subseteq S(r,n)$.  Specifically, this means that the $y^0$-spoke of $S_0$ contains the light marked points $z_1^0, \ldots, z_n^0$, with one on each component in order from innermost to outermost, or in other words that
\[z^i_j \in C^i_{n+1-j}.\]
For instance, Figure~\ref{fig:r3n4-zero-dim-action-before} illustrates $S_0 \subseteq \L^3_4(\zeta)$. 

\begin{lemma}
\label{lem:0dim}
The bijections 
\[ \left\{ \substack{\textstyle\text{zero-dimensional}\\\textstyle\text{boundary }\\ \textstyle\text{strata in } \L^r_n(\zeta)}\right\} \longleftrightarrow  S(r,n) \longleftrightarrow \left\{ \substack{\textstyle \text{vertices}\\ \textstyle \text{ of }\Delta^r_n}\right\}\]
of Theorem~\ref{thm:main} identify $A \in S(r,n)$ with the boundary stratum $S_0 \cdot A$ and with the vertex $(1,\ldots, n) \cdot A \in \Delta^r_n$.
\end{lemma}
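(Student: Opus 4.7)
My strategy is to chase both identifications through the chain bijections of Propositions~\ref{prop:ItoSI}, \ref{prop:ItoCI}, and~\ref{prop:ItoFI}. Given $A \in S(r,n)$, let $a_i \in [n]$ be the unique row index with $A_{a_i,i}\ne 0$, write $A_{a_i,i}=\zeta^{-\a(i)}$ for a uniquely-determined $\a(i) \in \Z_r$, and assemble the maximal chain $\tbI_A = (I_1,\ldots,I_n;\a)$ with
\[I_j := \{i \in [n] : a_i \geq n+1-j\}.\]
Since $i \mapsto a_i$ is a bijection on $[n]$, one has $|I_j| = j$ and $I_j \setminus I_{j-1} = \{i_j\}$ is a singleton with $a_{i_j} = n+1-j$. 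Because $\tbI_A$ is maximal, Remark~\ref{rmk:subgroupsG} yields that $H_{\tbI_A}$ is trivial, and $A$ itself is the unique matrix satisfying the two conditions of Definition~\ref{def:CI}; hence $C_{\tbI_A} = \{A\}$. This already settles the $\T$-coset half of the statement, reducing the lemma to the verifications $S_{\tbI_A} = S_0 \cdot A$ and $F_{\tbI_A} = \{(1,\ldots,n) \cdot A\}$.

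For the vertex claim, the explicit formula~\eqref{eq:xj} for a vertex of $F_{\tbI_A}$ gives $x_{i_j} = \zeta^{-\a(i_j)}\cdot(n+1-j) = a_{i_j}\cdot\zeta^{-\a(i_j)}$, or equivalently $x_i = a_i\cdot\zeta^{-\a(i)}$ for every $i$. The matching row-vector--matrix calculation
\[\bigl((1,2,\ldots,n) \cdot A\bigr)_i = \sum_{j=1}^n j\cdot A_{j,i} = a_i\cdot\zeta^{-\a(i)}\]
then confirms $F_{\tbI_A} = \{(1,\ldots,n)\cdot A\}$ coordinate by coordinate.

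For the boundary-stratum claim, Definition~\ref{def:SI} pins $S_{\tbI_A}$ down as the unique $0$-dimensional stratum with $z_{i_j}^{\a(i_j)}\in C^0_j$ for each $j$, i.e., $z_i^{\a(i)} \in C^0_{n+1-a_i}$ for each $i$; applying $\sigma^{-\a(i)}$ gives $z_i^0 \in C^{-\a(i)}_{n+1-a_i}$. On the $S_0\cdot A$ side, the action rewrites the $i$-th light marked point as
\[\bigl((z_1^0,\ldots,z_n^0)\cdot A\bigr)_i = \sum_{j=1}^n A_{j,i}\,z_j^0 = \zeta^{-\a(i)}\cdot z_{a_i}^0 = z_{a_i}^{-\a(i)}\]
via the identification $\zeta^k z_j^0 = \sigma^k(z_j^0)$, and since in $S_0$ the point $z_{a_i}^0$ lies on $C^0_{n+1-a_i}$, applying $\sigma^{-\a(i)}$ places $z_{a_i}^{-\a(i)}$ on $C^{-\a(i)}_{n+1-a_i}$, matching the placement in $S_{\tbI_A}$.

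The only genuine obstacle is notational: three indexing conventions must be lined up---rows/columns of $A$ in Definition~\ref{def:CI}, the orbit-index/orbit-member convention in $z_i^j$, and the ``innermost-to-outermost'' labeling $C^\ell_1,\ldots,C^\ell_n$ on each spoke used in the description of $S_0$. Once the dictionary $i \leftrightarrow (a_i, \a(i))$ is fixed as above, both identifications reduce to transparent matrix-vector arithmetic.
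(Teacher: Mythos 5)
Your proposal is correct and follows essentially the same route as the paper's proof: both pass through the maximal chain associated to $A$ (your $\tbI_A$ is exactly the paper's $(I_1,\ldots,I_n,\a)$ with $a_{i_j}=n+1-j$), verify the vertex claim by the row-vector--matrix computation $x_i = a_i\zeta^{-\a(i)}$, and verify $S_{\tbI_A}=S_0\cdot A$ by tracking which component each relabeled marked point lands on. The bookkeeping with $\sigma^{\pm\a(i)}$ matches the paper's, as one can confirm against the worked example with $(r,n)=(3,4)$.
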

\begin{proof}
All of these objects correspond to maximal chains
\[\tbI = (I_1, \ldots, I_n,\a)\]
with $\a: [n] \rightarrow \Z_r$, for which the nested sets can be expressed as
\begin{align*}
    I_1 &= \{i_1\}\\
    I_2 &= \{i_1, i_2\}\\
    &\vdots\\
    I_n &= \{i_1, i_2, \ldots, i_n\} = [n]
\end{align*}
for some $i_1, \ldots, i_n \in [n]$.  In this notation:
\begin{enumerate}[label=(\roman*)]
    \item the associated boundary stratum $S_{\tbI}$ is defined by the condition that
    \[\text{ the unique light marked point on the component } C^0_j \text{ is } z_{i_j}^{\a(i_j)}\]
    for each $j \in [n]$;
    \item the associated $\T$-coset is the singleton $C_{\tbI} = \{A\}$, where $A$ is the matrix defined by the condition that
    \[\text{row } n+1-j \text{ of } A \text{ has nonzero entry } \zeta^{-\a(i_j)} \text{ in column } i_j\]
    for each $j \in [n]$;
    \item the associated $\Delta$-face is the vertex $F_{\tbI} = \{(x_1, \ldots, x_n)\}$ with coordinates defined by
    \[x_{i_j} = \zeta^{-\a(i_j)} \cdot (n+1-j)\]
    for each $j \in [n]$.  (See equation~\eqref{eq:xj}.)
\end{enumerate}
In particular, it is straightforward to see that the vertex in (iii) is equal to 
\[(x_1, \ldots, x_n) = (1, \ldots, n) \cdot A\]
for the matrix $A$ in (ii), which verifies one half of the lemma. 

For the other half of the lemma, we must show that $S_{\tbI} = S_0 \cdot A$.  To see this, let $\{z^i_j\}$ denote the elements of the light orbits in $S_0$, so that, by the definition of $S_0$, we have
\[z^i_j \in C^i_{n+1-j}.\]
If $\{\tilde{z}^i_j\}$ denote the elements of the light orbits in $S_0 \cdot A$, then the definition of the action and of $A$ implies that
\[\tilde{z}^{0}_{i_j} = z^{-\a(i_j)}_{n+1-j} \in C^{-\a(i_j)}_j,\]
In particular, $S_0 \cdot A$ has just one light marked point on each of the components $C^0_j$ for $j \in [n]$, and that marked point is $\tilde{z}^{\a(i_j)}_{i_j}$.  This exactly agrees with the above description of $S_{\tbI}$, so $S_{\tbI} = S_0 \cdot A$.
\end{proof}

From here, the fact that the bijections of Theorem~\ref{thm:main} are inclusion-preserving gives a concise reinterpretation of the bijections in general.

\begin{proposition}
\label{prop:reinterpretation}
The bijections
\[ \left\{ \substack{\textstyle\text{boundary }\\ \textstyle\text{strata in } \L^r_n(\zeta)}\right\} \longleftrightarrow \left\{ \substack{\textstyle \T\text{-cosets}\\ \textstyle\text{ in } S(r,n)}\right\} \longleftrightarrow \left\{ \substack{\textstyle\Delta\text{-faces}\\ \textstyle\text{ of }\Delta^r_n}\right\}\]
of Theorem~\ref{thm:main} identify a boundary stratum $S$ with
\[\{A \in S(r,n) \; | \; S_0 \cdot A \in S\} \subseteq S(r,n)\]
and identify a $\Delta$-face $F$ with
\[\{A \in S(r,n) \; | \; (1,\ldots, n) \cdot A \in F\} \subseteq S(r,n),\]
both of which are $\T$-cosets.
\begin{proof}
Let $S$ be a boundary stratum, and let $C_S \subseteq S(r,n)$ be the $\T$-coset associated to it via the bijection of Theorem~\ref{thm:main}.  By Lemma~\ref{lem:0dim}, the bijection associates each $A \in S(r,n)$ to the zero-dimensional boundary stratum $S_0 \cdot A$.  And since it is inclusion-preserving, we have
\[A \in C_S \; \Leftrightarrow \; S_0 \cdot A \in S.\]
This proves that
\[C_S = \{A \in S(r,n) \; | \; S_0 \cdot A \in S\},\]
so in particular, the latter is indeed a $\T$-coset.  The argument for the case of a $\Delta$-face $F$ is identical.
\end{proof}
\end{proposition}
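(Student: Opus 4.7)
The plan is to deduce both set-theoretic descriptions as immediate corollaries of Lemma~\ref{lem:0dim} combined with the inclusion-preserving property of the bijections in Theorem~\ref{thm:main}. The key observation is that every singleton $\{A\} \subseteq S(r,n)$ is a zero-dimensional $\T$-coset, and Lemma~\ref{lem:0dim} tells us precisely which zero-dimensional stratum in $\L^r_n(\zeta)$ and which vertex of $\Delta^r_n$ it corresponds to. Consequently, the general-dimensional correspondence of Theorem~\ref{thm:main} can be recovered by asking which zero-dimensional objects lie inside a given stratum or $\Delta$-face.

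Concretely, fix a boundary stratum $S$ and let $C_S \subseteq S(r,n)$ be the $\T$-coset associated to $S$ under Theorem~\ref{thm:main}. For any $A \in S(r,n)$, Lemma~\ref{lem:0dim} identifies the zero-dimensional $\T$-coset $\{A\}$ with the zero-dimensional boundary stratum $S_0 \cdot A$. Since the bijection of Theorem~\ref{thm:main} preserves inclusions, we have
\[\{A\} \subseteq C_S \iff S_0 \cdot A \subseteq S,\]
which, once we interpret $S_0 \cdot A \in S$ as the containment of the (single point) $S_0 \cdot A$ in the closed stratum $S$, gives $C_S = \{A \in S(r,n) \mid S_0 \cdot A \in S\}$. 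The same reasoning applied to the bijection between $\T$-cosets and $\Delta$-faces, using instead the identification of $\{A\}$ with the vertex $(1, \ldots, n) \cdot A \in \Delta^r_n$ from Lemma~\ref{lem:0dim}, yields the analogous characterization of the $\T$-coset associated to a $\Delta$-face $F$.

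The final clause --- that these two sets are indeed $\T$-cosets --- is then automatic, since by construction they equal $C_S$ and $C_F$, respectively, which are $\T$-cosets by Theorem~\ref{thm:main}. There is essentially no obstacle to this argument; the proposition is a repackaging of information already assembled in Theorem~\ref{thm:main} and Lemma~\ref{lem:0dim} into a coordinate-free formula that no longer references the intermediate language of decorated nested chains. The only care required is notational, in reading $S_0 \cdot A \in S$ and $(1, \ldots, n) \cdot A \in F$ as containments of zero-dimensional objects in the higher-dimensional ones rather than as abstract set-theoretic memberships.
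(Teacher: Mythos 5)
Your proposal is correct and follows essentially the same route as the paper's own proof: invoke Lemma~\ref{lem:0dim} to identify each singleton $\T$-coset $\{A\}$ with the zero-dimensional stratum $S_0 \cdot A$ (resp.\ the vertex $(1,\ldots,n)\cdot A$), then use the inclusion-preserving property of the bijections to recover $C_S$ as the set of such $A$ contained in $S$. No gaps; the argument is a faithful reconstruction.
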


One reason to like this interpretation---in addition to the fact that it is much simpler to state than how we initially constructed the bijections of Theorem~\ref{thm:main}, and in particular does not require the auxiliary machinery of chains---is that it immediately shows that the bijections are $S(r,n)$-equivariant.  To see this, we first should note that there is a right action of $S(r,n)$ on the sets of boundary strata, $\T$-cosets, and $\Delta$-faces, in each case by setting
\[X \cdot A := \{x \cdot A \; | \; x \in X\}\]
for a boundary stratum, $\T$-coset, or $\Delta$-face $X$.  Here, the fact that $S(r,n)$ indeed acts on each of these sets is a result of the following observations:
\begin{itemize}
    \item The action of $S(r,n)$ on $\L^r_n$ preserves the topological type of $C$ (in fact, it preserves $C$ itself) while permuting marked points, so it takes boundary strata to boundary strata.
    \item The action of $S(r,n)$ on $\T$-cosets is equivalently described by
    \[\Big(\langle s_{\ell_1}, \ldots, s_{\ell_d} \rangle B\Big) \cdot A =\langle s_{\ell_1}, \ldots, s_{\ell_d} \rangle \cdot (BA),\]
    so it takes $\T$-cosets to $\T$-cosets.
    \item The action of $S(r,n)$ on $\C^n$ on $\Delta^r_n$ takes points satisfying the conditions of Remark~\ref{cor:settheoreticdescriptionofface} to points satisfying an analogous set of conditions, so it takes $\Delta$-faces to $\Delta$-faces.
\end{itemize}
From here, it is essentially immediate from Proposition~\ref{prop:reinterpretation} that the bijections of Theorem~\ref{thm:main} respect these actions.

\begin{remark}
One way to confirm that the bijections of Theorem~\ref{thm:main} are equivariant is to verify that the above three $S(r,n)$-actions all correspond, under Theorem~\ref{thm:main}, to an action on chains.  Indeed, this is the case: the image of a chain $(I_1, \dots, I_k, \a)$ under the action of the element $A \in S(r,n)$ is the chain $(I'_1, \dots, I'_k, \a')$ characterized by 
    \[I'_j = \bigcup_{i\in I_j} \{ \ell \in [n] \; | \; A_{i\ell} \neq 0\}\]
and with
\[\a'(\ell)=\a(i)-m_{i\ell} \, \text{ for all } \,  \ell \in I'_k,\] 
where $i \in [n]$ and ${m_{i\ell}} \in \Z_r$ are uniquely determined by the condition that $A_{i\ell} = \zeta^{m_{i \ell}}$.  However, it requires some care to check that this action on chains indeed matches the three actions above, so we will instead prove the equivariance of Theorem~\ref{thm:main} directly as a corollary of Proposition~\ref{prop:reinterpretation}.
\end{remark}

\begin{corollary}
The bijections of Theorem~\ref{thm:main} are $S(r,n)$-equivariant.
\end{corollary}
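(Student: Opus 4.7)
The plan is to leverage Proposition~\ref{prop:reinterpretation}, which expresses each bijection as a simple ``test-point" construction: a boundary stratum $S$ goes to $\{A \in S(r,n) : S_0 \cdot A \in S\}$, and a $\Delta$-face $F$ goes to $\{A \in S(r,n) : (1,\ldots,n) \cdot A \in F\}$. Because the $S(r,n)$-action on each of the three sets is by right multiplication (on marked points, on group elements, and on coordinates respectively), equivariance reduces to a one-line manipulation in each case. I would not reintroduce the chain bookkeeping at all; the whole point of Proposition~\ref{prop:reinterpretation} is that it bypasses it.

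Concretely, let $\Phi$ denote the bijection from boundary strata to $\T$-cosets. For any $B \in S(r,n)$ and any boundary stratum $S$, I would compute
\[
A \in \Phi(S \cdot B) \iff S_0 \cdot A \in S \cdot B \iff S_0 \cdot (A B^{-1}) \in S \iff A B^{-1} \in \Phi(S) \iff A \in \Phi(S) \cdot B,
\]
where the middle equivalences use the definition of the $S(r,n)$-action on boundary strata and on $\T$-cosets. The analogous calculation for the bijection $\Psi$ from $\Delta$-faces to $\T$-cosets is identical, replacing $S_0 \cdot A$ by the point $(1,\ldots,n) \cdot A \in \Delta^r_n$ and using that $(F \cdot B)$ consists of the points of the form $f \cdot B$ with $f \in F$. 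Composing $\Psi^{-1}$ with $\Phi$ then gives the equivariant bijection between boundary strata and $\Delta$-faces.

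There is essentially no obstacle here; the only mild care needed is to confirm that the displayed equivalences indeed use the correct action on each side, and in particular that right multiplication by $B^{-1}$ takes $S \cdot B$ back to $S$ and $F \cdot B$ back to $F$. Both are clear from the bullet points preceding the corollary, where the three $S(r,n)$-actions are each described explicitly. The proof can therefore be written as a short paragraph consisting of the single displayed chain of equivalences above together with its $\Delta$-face analogue.
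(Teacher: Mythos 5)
Your proposal is correct and is essentially the same argument as the paper's own proof: both rely on Proposition~\ref{prop:reinterpretation} and Lemma~\ref{lem:0dim} and reduce equivariance to the manipulation $S_0 \cdot A \in S \cdot B \iff S_0 \cdot (AB^{-1}) \in S$, with the identical computation for $\Delta$-faces. The only difference is cosmetic (a chain of equivalences versus set-builder manipulations).
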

\begin{proof}
Let $S$ be a boundary stratum, and let $C_S$ be the associated $\T$-coset.  Under the action of $B \in S(r,n)$, we have
\[C_S \cdot B = \{AB \in S(r,n) \; | \; A \in C_S\}=\{A \in S(r,n) \; | \; AB^{-1} \in C_S\}.\]
The element $AB^{-1} \in S(r,n)$ corresponds to the boundary stratum $S_0 \cdot AB^{-1} \in \L^r_n(\zeta)$, by Lemma~\ref{lem:0dim}.  Together with the fact that the bijection from $\T$-cosets to boundary strata is inclusion-preserving, this implies that
\[AB^{-1} \in C_S \; \text{ if and only if }  \; S_0 \cdot AB^{-1} \in S.\]
Thus, we have
\[C_S \cdot B = \{A \in S(r,n) \; | \; S_0 \cdot AB^{-1} \in S\} = \{ A \in S(r,n) \; | \; S_0 \cdot A \in S \cdot B\},\]
which, by Proposition~\ref{prop:reinterpretation}, is precisely equal to $C_{S \cdot B}$.

This proves that the bijection between the sets of boundary strata and $\T$-cosets is $S(r,n)$-equivariant, and an identical proof shows the same statement for the bijection between $\Delta$-faces of $\Delta^r_n$ and $\T$-cosets.
\end{proof}

\bibliographystyle{alpha}
\bibliography{bibliography.bib}
\end{document}